\newtheorem{theorem}{Theorem}[section]
\newtheorem{lemma}[theorem]{Lemma}
\newtheorem{proposition}[theorem]{Proposition}
\newtheorem{corollary}[theorem]{Corollary}
\newtheorem*{theorem*}{Theorem}
\newtheorem*{proposition*}{Proposition}
\newtheorem*{corollary*}{Corollary}
\theoremstyle{definition}
\newtheorem{definition}[theorem]{Definition}
\newtheorem{example}[theorem]{Example}
\newtheorem{remark}[theorem]{Remark}
\numberwithin{equation}{section}
\def\co{\colon\thinspace}
\begin{document}

\title[Localization for doubly-periodic knots]{Localization and the link Floer homology of doubly-periodic knots}

\author[Kristen Hendricks]{Kristen Hendricks}

%\thanks{Received Sep 1, 2012, accepted Mar 21, 2014 \\ The author was partially supported by an NSF grant number DMS-0739392.}

\begin{abstract}

A knot $\widetilde{K} \subset S^3$ is $q$-periodic if there is a $\mathbb Z_q$-action preserving $\widetilde{K}$ whose fixed set is an unknot $\widetilde{U}$.  The quotient of $\widetilde{K}$ under the action is a second knot $K$.  We construct equivariant Heegaard diagrams for $q$-periodic knots, and show that Murasugi's classical condition on the Alexander polynomials of periodic knots is a quick consequence of these diagrams. For $\widetilde{K}$ a two-periodic knot, we show there is a spectral sequence whose $E^1$ page is $\widehat{\mathit{HFL}}(S^3,\widetilde{K}\cup \widetilde{U})\otimes V^{\otimes (2n-1)})\otimes \mathbb Z_2((\theta))$ and whose $E^{\infty}$ page is isomorphic to $(\widehat{\mathit{HFL}}(S^3,K\cup U)\otimes V^{\otimes (n-1)})\otimes \mathbb Z_2((\theta))$, as $\mathbb Z_2((\theta))$-modules, and a related spectral sequence whose $E^1$ page is $(\widehat{\mathit{HFK}}(S^3,\widetilde{K})\otimes\linebreak V^{\otimes (2n-1)}\otimes W)\otimes \mathbb Z_2((\theta))$ and whose $E^{\infty}$ page is isomorphic to $(\widehat{\mathit{HFK}}(S^3,K)\otimes V^{\otimes (n-1)} \otimes W)\otimes \mathbb Z_2((\theta))$. As a consequence, we\linebreak use these spectral sequences to recover a classical lower bound of Edmonds on the genus of $\widetilde{K}$, along with a weak version of a classical fibredness result of Edmonds and Livingston. We give an example of a knot $\widetilde{K}$ which is not obstructed from being two-periodic with a particular quotient knot $K$ by Edmonds' and Murasugi's conditions, but for which our spectral sequence cannot exist.

\end{abstract}

\maketitle

\tableofcontents

\section{Introduction}

We say $\widetilde{K} \subset S^3$ is a \textit{periodic knot} if there is a $\mathbb Z_q$-action on $(S^3, \widetilde{K})$ which preserves $\widetilde{K}$ and whose fixed set is an unknot $\widetilde{U}$ disjoint from $\widetilde{K}$.  Let $\tau$ be a generator for the action.  An important special case is $\tau^2=1$, in which case $\widetilde{K}$ is said to be \textit{doubly-periodic}.

The quotient of $(S^3, \widetilde{K})$ under the group action is a second knot $(S^3, K)$ such that the map $(S^3, \widetilde{K}) \rightarrow (S^3,K)$ is an $q$-fold branched cover over $U$, the unknot which is the image of $\widetilde{U}$ in the quotient.  The knot $K$ is said to be the $q$-fold \textit{quotient knot} of $\widetilde{K}$.

\begin{figure}
\centering
\includegraphics{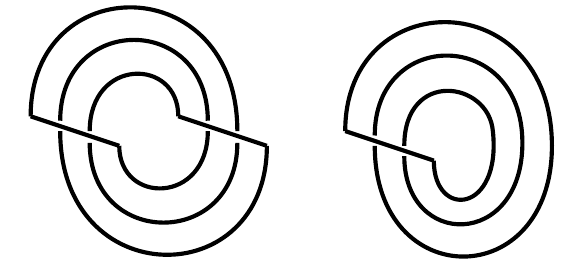}
\caption{A doubly-periodic diagram for the trefoil, and its quotient knot (an unknot) under the $\mathbb Z_2$-action.}
\end{figure}

Link Floer homology is an invariant of a link in $S^3$ introduced by Ozsv{\'a}th and Szab{\'o} \cite{MR2443092} as a generalization of the knot Floer homology developed in 2003 by Ozsv{\'a}th and Szab{\'o} \cite{MR2065507} and independently by Rasmussen \cite{MR2704683}.  The theory associates to $(S^3,L)$ a bigraded $\mathbb Z_2$-vector space $\widehat{\mathit{HFL}}(S^3,L)$ which arises as the homology of the Floer chain complex of two Lagrangian tori in the symmetric product of a punctured Heegaard surface for $(S^3,L)$.  The graded Euler characteristic of link Floer cohomology is the multivariable Alexander polynomial of the knot multiplied by certain standard factors \cite{MR2443092}; indeed, the theory categorifies the Thurston norm of the knot complement \cite{MR2393424}.

The purpose of this paper is to use localization theorems of Seidel and Smith to analyze the link Floer homology of a periodic (usually doubly-periodic) knot together with its axis compared to the link Floer homology of the quotient knot together with the axis.  We will first construct Heegaard diagrams for $(S^3, \widetilde{K} \cup \widetilde{U})$ which are preserved by the action of $\mathbb Z_q$ and  whose quotients under the action are Heegaard diagrams for $(S^3,K)$. These periodic Heegaard diagrams will allow us to give a simple Heegaard Floer reproof of one of Murasugi's conditions for the Alexander polynomial of a periodic knot in the case that $q=p^r$ for some prime $p$. Let $\lambda = \operatorname{lk}(\widetilde{K}, \widetilde{U}) = \operatorname{lk}(K,U)$. Since $\widetilde{K}$ is a knot (not a link), it follows that $\lambda$ is coprime to the periodicity~$q$.

\begin{theorem} \label{Murasugi Theorem}\hspace{-.5ex}\cite[Corollary 1]{MR0292060} $\Delta_{\widetilde{K}}(t) \!\equiv\!  t^{\pm i}(1 \!+\! t \!+\! \cdots \!+\! t^{\lambda-1})^{q-1} (\Delta_{K}(t))^{q}$ modulo $p$.
\end{theorem}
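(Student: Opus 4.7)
The plan is to compute the multivariable Alexander polynomial of $\widetilde K\cup U$ from the equivariant Heegaard diagram built in the previous section, apply a Smith-style orbit count to reduce the computation mod $p$ to the quotient diagram for $(S^3, K\cup U)$, and then invoke the Torres formula to extract Murasugi's one-variable congruence.

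First, let $(\Sigma,\alpha,\beta,\mathbf{w},\mathbf{z})$ be the equivariant Heegaard diagram for $(S^3,\widetilde K\cup U)$, with $\tau$ acting by an orientation-preserving diffeomorphism of $\Sigma$ that preserves $\alpha$, $\beta$, $\mathbf{w}$, $\mathbf{z}$ setwise. Writing $t$ for the formal variable of $\widetilde K$ and $u$ for that of $U$, the identification of the graded Euler characteristic of $\widehat{\mathit{HFL}}$ with the multivariable Alexander polynomial gives
\[
\sum_{x\in\mathbb T_\alpha\cap\mathbb T_\beta}\varepsilon(x)\,t^{A_{\widetilde K}(x)}u^{A_U(x)}\;\doteq\;\Delta_{\widetilde K\cup U}(t,u),
\]
where $\doteq$ absorbs a monomial unit and a standard basepoint factor determined by the number of basepoint pairs on each component.

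Next, because $\tau$ is orientation-preserving, permutes intersection points, and preserves both Alexander gradings (each sums local multiplicities over the full $\tau$-orbit of basepoints on its component) and the local sign $\varepsilon$, every non-fixed $\tau$-orbit on $\mathbb T_\alpha\cap\mathbb T_\beta$ has length dividing $q=p^r$ and hence contributes $0\pmod p$. The $\tau$-fixed intersection points upstairs biject with the intersection points of the quotient diagram for $(S^3,K\cup U)$. A $K$-basepoint downstairs has exactly $q$ preimages upstairs (each carrying equal local multiplicity at a fixed point), while a $U$-basepoint lies on the branch locus and lifts one-to-one, so the gradings transform as $A_{\widetilde K}(\widetilde x)=q\,A_K(x)+c_1$ and $A_U(\widetilde x)=A_U(x)+c_2$ for constants $c_1,c_2$. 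Thus the fixed-point contribution equals $t^{c_1}u^{c_2}\cdot\Delta_{K\cup U}(t^q,u)$ up to the analogous basepoint factor on the quotient diagram, and the basepoint factors on the two sides differ by a power of $p$ in their exponents that can itself be handled by Frobenius mod~$p$.

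Finally, specialising to $u=1$ and applying the Torres formula $\Delta_{L\cup U}(t,1)=(1+t+\cdots+t^{\lambda-1})\Delta_L(t)$ on each side, together with the Frobenius congruences $t^q-1\equiv(t-1)^q\pmod p$ (so that $1+t^q+\cdots+t^{q(\lambda-1)}\equiv(1+t+\cdots+t^{\lambda-1})^q$) and $\Delta_K(t^q)\equiv\Delta_K(t)^q\pmod p$, converts the mod-$p$ equality into
\[
(1+t+\cdots+t^{\lambda-1})\,\Delta_{\widetilde K}(t)\;\equiv\;t^{\pm i}(1+t+\cdots+t^{\lambda-1})^q\,\Delta_K(t)^q\pmod p,
\]
and cancelling one factor of $(1+t+\cdots+t^{\lambda-1})$ yields the theorem. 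The main obstacle will be the Alexander-grading bookkeeping across the branched cover: verifying that the $q$ preimages of each $\widetilde K$-basepoint genuinely produce the factor of $q$ in $A_{\widetilde K}$ while the branch locus keeps $A_U$ unchanged, and checking that $\tau$ preserves the local intersection sign $\varepsilon$ at each generator (not merely the Maslov grading mod $2$) so that the Smith-style congruence is honest modulo $p$ for odd primes rather than only modulo $2$.
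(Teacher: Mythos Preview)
Your proposal is correct and follows essentially the same approach as the paper: an orbit count on the equivariant Heegaard diagram to obtain the mod-$p$ congruence $\Delta_{\widetilde L}(t_1,t_2)\doteq\Delta_L(t_1^q,t_2)$, then setting the axis variable to $1$, applying the Torres-type relation, and Frobenius. The sign obstacle you flag is precisely what the paper resolves, via a direct computation showing $\mu(\widetilde\phi)=q\mu(\phi)-(q-1)(n_{z_0}(\phi)+n_{w_0}(\phi))$, so that for odd $p$ the relative Maslov grading mod $2$ (hence $\varepsilon$) is preserved when lifting invariant generators along a domain with $n_{w_0}=0$.
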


Restricting to the case that $\widetilde{K}$ is doubly periodic, we will proceed to prove the following localization theorem. Let $\mathbb Z_2((\theta)) = \mathbb Z_2[[\theta]](\theta^{-1})$. Let $V$ and $W$ be two-dimensional vector spaces over $\mathbb Z_2$, which will later be distinguished by their gradings. Let $A_1$ be the Alexander grading relative to the knot and $A_2$ be the Alexander grading relative to the unknotted axis (these gradings will be defined in Section \ref{Heegaard Floer Background Section}).

\begin{theorem} \label{Link Floer Homology Spectral Sequence}
There is an integer $n_1$ less than half the number of crossings of a periodic diagram $D$ for $\tilde{K}$ such that there is a spectral sequence whose $E^1$ page is $\big(\widehat{\mathit{HFL}}(S^3, \widetilde{K} \cup \widetilde{U}) \otimes V^{\otimes (2n_1-1)}\big) \otimes \mathbb Z_2((\theta))$ and whose $E^{\infty}$ page is isomorphic to $\big(\widehat{\mathit{HFL}}(S^3, K \cup U) \otimes V^{\otimes (n_1-1)}\big) \otimes  \mathbb Z_2((\theta))$ as $\mathbb Z_2((\theta))$-modules. This sequence splits along Alexander multigradings, and carries the grading $(A_1,A_2) = (2a+\frac{1}{2}, b)$ to $(A_1,A_2)=(a + \frac{1}{2}, b)$ for all $a,b \in \mathbb Z$. In any grading $(A_1,A_2)$ which cannot be written as $(2a+\frac{1}{2}, b)$, the last page of the spectral sequence is empty.
\end{theorem}

We may also reduce the spectral sequence of Theorem \ref{Link Floer Homology Spectral Sequence} to contain only the information of knot Floer homology of $\widetilde{K}$ and $K$.

\begin{theorem} \label{Knot Floer Homology Spectral Sequence}
There is an integer $n_1$ less than half the number of crossings on a periodic diagram $D$ for $\widetilde{K}$ such that there is a spectral sequence whose $E^1$ page is $\big(\widehat{\mathit{HFK}}(S^3, \widetilde{K}) \otimes V^{\otimes (2n_1-1)} \otimes W \big)\otimes \mathbb Z_2((\theta))$ and whose $E^{\infty}$ page is isomorphic to $\big(\widehat{\mathit{HFK}}(S^3, K) \otimes V^{\otimes (n_1-1)} \otimes W\big) \otimes \mathbb Z_2((\theta))$. This sequence splits along Alexander gradings, and carries the Alexander grading $A_1= 2a + \frac{1-\lambda}{2}$ to $A_1 = a + \frac{1-\lambda}{2}$ for all integers $s \in \mathbb Z$. In any grading $A_1$ which cannot be written as $2a + \frac{1-\lambda}{2}$, the last page of the spectral sequence is empty.
\end{theorem}

Analyzing the behavior of the Alexander gradings in these spectral se\-quences yields some geometric consequences. In particular, it is worth noting the following rank inequalities implied by Theorems \ref{Link Floer Homology Spectral Sequence} and \ref{Knot Floer Homology Spectral Sequence}. In Section 2, we will construct a Heegaard diagram $\mathcal D$ for $K \cup U$ with equivariant lift a Heegaard diagram $\widetilde{\mathcal D}$ for $\widetilde{K} \cup \widetilde{U}$. Then Theorem \ref{Link Floer Homology Spectral Sequence} can be rephrased as asserting the existence of a spectral sequence from the theory $\widetilde{\mathit{HFL}}(\widetilde{\mathcal D})\otimes \mathbb Z_2((\theta))$ to $\widetilde{\mathit{HFL}}(\mathcal D)\otimes \mathbb Z_2((\theta))$, and Theorem \ref{Knot Floer Homology Spectral Sequence} can be rephrased as a spectral sequence from $\widetilde{\mathit{HFK}}(\widetilde{\mathcal D})\otimes \mathbb Z_2((\theta))$ to $\widetilde{\mathit{HFK}}(\mathcal D)\otimes \mathbb Z_2((\theta))$.We have the following rank inequalities.

\begin{corollary} \label{Alexander Gradings Link Lemma} For all $a,b \in \mathbb Z$, there is a rank inequality
\begin{align*}
rk\left(\widetilde{\mathit{HFL}}\left(\widetilde{\mathcal D}, \left(\frac{1}{2} +2a, b\right)\right)\right) \geq rk\left(\widetilde{\mathit{HFL}}\left(\mathcal D, \left(\frac{1}{2}+a, b\right)\right)\right).
\end{align*}

\end{corollary}

\begin{corollary} \label{Alexander Gradings Knot Lemma}
For all $a \in \mathbb Z$, there is a rank inequality
\begin{align*}
rk\left(\widetilde{\mathit{HFK}}\left(\widetilde{\mathcal D}, 2a + \frac{1 - \lambda}{2}\right)\right) \geq rk\left(\widetilde{\mathit{HFK}}\left(\mathcal D, a + \frac{1 - \lambda}{2}\right)\right).
\end{align*}

In particular, if Edmonds' condition is sharp,there is a rank inequality
\begin{align*}
\widehat{\mathit{HFK}}(S^3, \widetilde{K}, g(\widetilde{K})) \geq \widehat{\mathit{HFK}}(S^3,K, g(K)).
\end{align*}
\end{corollary}

%This spectral sequence splits along Alexander gradings of the theory $\widehat{\mathit{HFK}}(S^3, \widetilde{K}) \otimes V^{\otimes (2n_1-1)} \otimes W$, which are related to the Alexander gradings of $\widehat{\mathit{HFK}}(S^3, \widetilde{K}) \otimes V^{\otimes (n_1-1)} \otimes W$ by division by two and an overall grading shift. 

These relationships yield a reproof of a classical result, proved by Alan Edmonds using minimal surface theory.

\begin{corollary} \label{Genus Inequality Corollary} \cite[Theorem 4]{MR769284} Let $\widetilde{K}$ be a doubly-periodic knot in $S^3$ and $K$ be its quotient knot.  Then
\[
g(\widetilde{K}) \geq 2g(K) + \frac{\lambda-1}{2}.
\]

\end{corollary}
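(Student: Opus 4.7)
The plan is to derive the genus bound directly from Theorem \ref{Knot Floer Homology Spectral Sequence} by tracking top Alexander gradings across the spectral sequence. The crucial external input is the genus-detection theorem of Ozsv\'ath and Szab\'o:
\[
g(K) = \max\bigl\{\, s \in \mathbb{Z} : \widehat{\mathit{HFK}}(S^3,K,s) \neq 0 \,\bigr\},
\]
and the analogous equality for $\widetilde{K}$.

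First I would identify the top Alexander gradings of the $E^1$ and $E^\infty$ pages. Each copy of $V$ has its two generators in Alexander gradings $0$ and $-1$, so $V^{\otimes k}$ contributes $0$ to the top Alexander grading for every $k$, and the factor of $\mathbb{Z}_2((\theta))$ is irrelevant here. Hence the top Alexander grading in which $E^1$ is nontrivial equals $g(\widetilde{K}) + t_W$, while the top Alexander grading in which $E^\infty$ is nontrivial equals $g(K) + t_W$, where $t_W$ denotes the top Alexander grading of $W$. I would then invoke the splitting of the spectral sequence along Alexander gradings described just after Theorem \ref{Knot Floer Homology Spectral Sequence}: letting $\sigma$ denote the overall shift in the \emph{division by two} relation, a nonzero summand of $E^\infty$ in Alexander grading $b$ forces a nonzero summand of $E^1$ in Alexander grading $2b - 2\sigma$. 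Applying this at $b = g(K) + t_W$ and bounding the result by the top Alexander grading of $E^1$ yields
\[
2\bigl(g(K) + t_W\bigr) - 2\sigma \;\leq\; g(\widetilde{K}) + t_W,
\]
or equivalently $g(\widetilde{K}) \geq 2 g(K) + (t_W - 2\sigma)$.

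The main obstacle will be verifying that $t_W - 2\sigma = (\lambda-1)/2$. This requires extracting from the construction behind Theorem \ref{Knot Floer Homology Spectral Sequence} the precise Alexander grading conventions on $W$ (which records the Alexander grading contribution of the axis $U$ and its linking with $\widetilde{K}$) and the value of $\sigma$ (which is governed by how the Alexander grading of a state on the periodic Heegaard diagram for $\widetilde{K} \cup U$ relates to that of its image on the quotient Heegaard diagram for $K \cup U$). Since $\lambda = \ell k(\widetilde{K},U) = \ell k(K,U)$ enters linearly in both conventions via the formula for the Alexander grading of a state in terms of winding numbers of $\alpha$-curves around the basepoints associated to the axis, this reduces to a concrete bookkeeping exercise comparing those winding number sums upstairs and downstairs, which I expect to produce exactly $(\lambda-1)/2$ and complete the proof.
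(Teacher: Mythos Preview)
Your approach is the paper's approach: both compare top Alexander gradings on $E^1$ and $E^{\infty}$ via Ozsv\'ath--Szab\'o genus detection, and both reduce everything to identifying the constant in the grading correspondence. The paper packages that constant as Lemma~\ref{Alexander Gradings Knot Lemma}: the grading $A_1 = \frac{1-\lambda}{2} + 2a$ on $E^1$ goes to $A_1 = \frac{1-\lambda}{2} + a$ on $E^{\infty}$, which in your notation gives $t_W = 0$, $-2\sigma = \frac{\lambda-1}{2}$, hence $t_W - 2\sigma = \frac{\lambda-1}{2}$ as desired.

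One correction and one caution. The correction: $W$ has both generators in $A_1$-grading $0$, so it contributes nothing to the top grading; the entire $\lambda$-dependence sits in $\sigma$, not in $t_W$. The caution: pinning down $\sigma$ is more than winding-number bookkeeping. The paper first fixes the absolute grading correspondence for the \emph{link} Floer spectral sequence (Lemma~\ref{Alexander Gradings Link Lemma}) by a Thurston-norm argument---using the categorification of Proposition~\ref{Statement of Categorification} together with the spectral sequence itself to show the $A_1$-breadth on $E^1$ is exactly twice that on $E^{\infty}$, which forces top grading to map to top grading---and only then applies the $\mathrm{spin}^c$-filling shift of Lemma~\ref{Gradings Shift Lemma} (forgetting $U$ shifts $A_1$ down by $\lambda/2$ on both sides) to obtain the knot Floer version. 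So the step you flagged as the main obstacle is genuine and requires this extra input; it is not resolved by a local comparison of generator gradings alone.
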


We will also observe from Corollary \ref{Alexander Gradings Knot Lemma} a proof of the following corollary.

\begin{corollary}  \label{Fiberedness Corollary}

Let $\widetilde{K}$ be a doubly-periodic knot in $S^3$ and $K$ its quotient knot.  If Edmonds' 
condition is sharp and $\widetilde{K}$ is fibered, $K$ is fibered.

\end{corollary}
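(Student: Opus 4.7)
The plan is to combine the spectral sequence of Theorem \ref{Knot Floer Homology Spectral Sequence} with the fiberedness detection theorem for knot Floer homology (due to Ozsv{\'a}th--Szab{\'o}, Ghiggini, and Ni): a knot $L \subset S^3$ is fibered if and only if $\widehat{\mathit{HFK}}(S^3, L, g(L))$ has rank one over $\mathbb F_2$.

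First, I would examine the top Alexander grading on the $E^1$ page. Since the two-dimensional spaces $V$ and $W$ are each supported in two adjacent Alexander gradings, the summand of $\widehat{\mathit{HFK}}(S^3, \widetilde{K}) \otimes V^{\otimes(2n_1 - 1)} \otimes W$ at its maximal Alexander grading is $\widehat{\mathit{HFK}}(S^3, \widetilde{K}, g(\widetilde{K}))$ tensored with the one-dimensional top-grading piece of $V^{\otimes(2n_1-1)} \otimes W$. The hypothesis that $\widetilde{K}$ is fibered forces this summand to have rank one, so after localizing by $\mathbb Z_2((\theta))$, the top-Alexander-grading summand of the $E^1$ page is free of rank one over $\mathbb Z_2((\theta))$.

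Next, the spectral sequence splits along the Alexander grading, and the comparison between the $E^1$ and $E^\infty$ pages is governed by the grading relation of division by two together with an overall shift described after Theorem \ref{Knot Floer Homology Spectral Sequence}. Under the hypothesis that Edmonds' inequality is sharp, the maximum Alexander grading on the $E^1$ page corresponds precisely to the maximum Alexander grading on the $E^\infty$ page --- this is the same matching of top gradings that is used to extract Corollary \ref{Genus Inequality Corollary}. Because rank can only drop through the pages of the spectral sequence, the top-grading summand of $E^\infty$ has rank at most one over $\mathbb Z_2((\theta))$. Since that summand equals $\widehat{\mathit{HFK}}(S^3, K, g(K))$ tensored with the one-dimensional top-grading piece of $V^{\otimes(n_1-1)} \otimes W$, we obtain $\dim_{\mathbb F_2} \widehat{\mathit{HFK}}(S^3, K, g(K)) \leq 1$.

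Finally, $\widehat{\mathit{HFK}}$ detects the Seifert genus (Ozsv{\'a}th--Szab{\'o}), so the summand $\widehat{\mathit{HFK}}(S^3, K, g(K))$ is always nontrivial; its rank is therefore exactly one, and the fiberedness criterion implies that $K$ is fibered. The main obstacle will be to verify carefully that the top Alexander grading on the $E^1$ page actually lands in the top Alexander grading on the $E^\infty$ page under the grading shift. This is precisely the place where sharpness of Edmonds' inequality enters, and it requires a careful bookkeeping of the contributions of the tensor factors $V$ and $W$ to the extremal gradings at both ends of the spectral sequence.
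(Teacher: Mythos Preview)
Your approach is essentially the paper's own proof: use Theorem \ref{Knot Floer Homology Spectral Sequence}, note that sharpness of Edmonds' condition makes the top Alexander grading on $E^1$ correspond to the top Alexander grading on $E^\infty$ under the grading rule of Lemma \ref{Alexander Gradings Knot Lemma}, bound the rank at that grading, and then invoke Ni--Ghiggini--Ozsv{\'a}th--Szab{\'o}.

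There is one slip worth fixing. You write that $V$ and $W$ are ``each supported in two adjacent Alexander gradings,'' and hence that the top-grading piece of $V^{\otimes(2n_1-1)}\otimes W$ is one-dimensional. That is true for $V$ but not for $W$: by the definition preceding Proposition \ref{Ignore Component Proposition}, both generators of $W$ sit in Alexander grading $0$ (they differ only in Maslov grading). Thus the top Alexander grading of $\widehat{\mathit{HFK}}(S^3,\widetilde{K})\otimes V^{\otimes(2n_1-1)}\otimes W$ is $\widehat{\mathit{HFK}}(S^3,\widetilde{K},g(\widetilde{K}))\otimes W$, which has rank \emph{two} when $\widetilde{K}$ is fibered, not one. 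The same $W$ factor appears on the $E^\infty$ side, so the top grading there is $\widehat{\mathit{HFK}}(S^3,K,g(K))\otimes W$. The spectral sequence then gives $2\cdot\dim\widehat{\mathit{HFK}}(S^3,K,g(K))\leq 2$, and you finish exactly as you wrote using genus detection. In other words, the extra factor of $2$ from $W$ appears on both sides and cancels; your conclusion is unaffected, but the intermediate ranks should read ``two'' rather than ``one,'' matching the paper's phrasing.
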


This is a weaker version of the following theorem, proved by Edmonds and Livingston.

\begin{theorem}\cite[Prop. 6.1]{MR728451} Let $\widetilde{K}$ be a doubly-periodic knot in $S^3$ and $K$ its quotient knot.  If  $\widetilde{K}$ is fibered, $K$ is fibered.

\end{theorem}

In some cases, Corollary \ref{Alexander Gradings Knot Lemma} may provide an obstruction to a knot being two-periodic with a particular quotient knot, even if Corollary \ref{Murasugi Theorem} and Corollary \ref{Genus Inequality Corollary} are satisfied. In Section 3.3 we will use it to show the following.

\begin{example}\label{Main Example}
Let $\widetilde{K}$ be the connect sum of the right-handed trefoil and the Kinoshita-Terasaka knot and $K$ be the positive untwisted Whitehead double of the figure eight knot. The Alexander polynomials and genera of $\widetilde{K}$ and $K$ satisfy the conditions of Corollaries \ref{Murasugi Theorem} and \ref{Genus Inequality Corollary}, but the spectral sequence of Theorem \ref{Knot Floer Homology Spectral Sequence} cannot exist.
\end{example}

The spectral sequences of Theorems \ref{Link Floer Homology Spectral Sequence} and \ref{Knot Floer Homology Spectral Sequence} are analogs of the spectral sequence for the knot Floer homology of double branched covers of knots in $S^3$ constructed in \cite{Hendricks}; their construction requires only slightly more technical complexity.  As in that paper, the key technical tool in the proofs of Theorems \ref{Link Floer Homology Spectral Sequence} and \ref{Knot Floer Homology Spectral Sequence}  is a result of Seidel and Smith concerning equivariant Floer cohomology. Let $M$ be an exact symplectic manifold, convex at infinity, containing exact Lagrangians $L_0$ and $L_1$ and equipped with an involution $\tau$ preserving $(M, L_0, L_1)$. Let $(M^{\operatorname{inv}}, L_0^{\operatorname{inv}}, L_1^{\operatorname{inv}})$ be the submanifolds of each space fixed by $\tau$. Then under certain stringent conditions on the normal bundle $N(M^{\operatorname{inv}})$ of $M^{\operatorname{inv}}$ in $M$, there is a rank inequality between the Floer cohomology $\mathit{HF}(L_0,L_1)$ of the two Lagrangians $L_0$ and $L_1$ in $M$ and the Floer cohomology $\mathit{HF}(L_0^{\operatorname{inv}}, L_1^{\operatorname{inv}})$ of $L_0^{\operatorname{inv}}$ and $L_1^{\operatorname{inv}}$ in $M^{\operatorname{inv}}$. More precisely, they consider the normal bundle $N(M^{\operatorname{inv}})$ to $M^{\operatorname{inv}}$ in $M$ and its pullback $\Upsilon(M^{\operatorname{inv}})$ to $M^{\operatorname{inv}} \times [0,1]$. We ask that $M$ satisfy a $K$-theoretic condition called \textit{stable normal triviality} relative to two Lagrangian subbundles over $L_0^{\operatorname{inv}} \times \{0\}$ and $L_1^{\operatorname{inv}} \times \{1\}$. Seidel and Smith prove the following.

\begin{theorem} \label{SeidelSmith} \cite[Section 3f]{MR2739000}
If $\Upsilon(M^{\operatorname{inv}})$ carries a stable normal trivialization, there is a spectral sequence whose $E^1$ page is $\mathit{HF}(L_0,L_1) \otimes \mathbb Z_2((\theta))$ and whose $E^{\infty}$ page is isomorphic to $\mathit{HF}(L_0^{\operatorname{inv}}, L_1^{\operatorname{inv}}) \otimes \mathbb Z_2((\theta))$ as $\mathbb Z_2((\theta))$ modules. 
\end{theorem}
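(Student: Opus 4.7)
The plan is to construct an equivariant version of Lagrangian Floer cohomology modelled on the Borel construction, obtaining a module over $H^*(B\mathbb{Z}/2;\mathbb{Z}_2) = \mathbb{Z}_2[\theta]$ whose $\theta$-adic behaviour interpolates between $\mathit{HF}(L_0,L_1)$ and $\mathit{HF}(L_0^{\text{inv}},L_1^{\text{inv}})$. Concretely, I would fix a $\tau$-invariant almost complex structure on $M$ and choose a $\mathbb{Z}/2$-equivariant family of auxiliary perturbations for the strip equation parametrised by a contractible free $\mathbb{Z}/2$-space such as $S^\infty$, playing the role of $E\mathbb{Z}/2$. The equivariant Floer complex $\mathit{CF}_{\text{eq}}(L_0,L_1)$ is then a free $\mathbb{Z}_2[[\theta]]$-module on the intersection points of $L_0 \cap L_1$, with differential counting parametrised strips weighted by a power of $\theta$ determined by the equivariant index of the parameter.

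Tensoring with $\mathbb{Z}_2((\theta))$ and filtering by powers of $\theta$ then produces a spectral sequence. The associated graded of this filtration is the ordinary Floer complex tensored with $\mathbb{Z}_2((\theta))$, so $E^1 \cong \mathit{HF}(L_0,L_1) \otimes \mathbb{Z}_2((\theta))$. To identify the $E^\infty$ page I would prove a localisation statement: after inverting $\theta$, the contributions of intersection points and strips disjoint from the fixed locus cancel in free $\mathbb{Z}/2$-orbits, and what remains is a complex supported along $M^{\text{inv}}$ whose cohomology is $\mathit{HF}(L_0^{\text{inv}},L_1^{\text{inv}}) \otimes \mathbb{Z}_2((\theta))$.

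The principal difficulty lies in achieving equivariant transversality along $M^{\text{inv}}$. A pseudoholomorphic strip in $M$ whose image lies in $M^{\text{inv}}$ is generally not regular as a solution of the equivariant equation, because the linearised Cauchy--Riemann operator may fail to be surjective in the normal directions, and one cannot remove this failure by generic perturbations without breaking $\tau$-equivariance. The stable normal triviality of $\Upsilon(M^{\text{inv}})$ relative to the prescribed Lagrangian subbundles over $L_0^{\text{inv}} \times \{0\}$ and $L_1^{\text{inv}} \times \{1\}$ is precisely the ingredient that lets one construct a global $\mathbb{Z}/2$-equivariant perturbation of the strip equation in the normal directions that kills the normal cokernel while leaving the Floer theory on $M^{\text{inv}}$ unaffected. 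Once this perturbation scheme is in place, the localised complex is quasi-isomorphic to $\mathit{CF}(L_0^{\text{inv}},L_1^{\text{inv}}) \otimes \mathbb{Z}_2((\theta))$ and the claimed identification of $E^\infty$ follows.
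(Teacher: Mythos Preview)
This theorem is not proved in the paper; it is a result of Seidel and Smith, cited from \cite{MR2739000}. The paper's entire argument is the single sentence following Theorem~\ref{Localization}: ``This implies Theorem~\ref{SeidelSmith}.'' In other words, the paper quotes the more precise localization statement (the existence of maps $\Delta^{(m)}:\mathit{HF}_{\text{borel}}(L_0,L_1)\to \mathit{HF}(L_0^{\text{inv}},L_1^{\text{inv}})[[\theta]]$ which become isomorphisms after inverting $\theta$), observes that the double complex $(\mathit{CF}(L_0,L_1)\otimes\mathbb Z_2[[\theta]],\,\delta+\theta(1+\tau^{\#}))$ already furnishes a spectral sequence with $E^1=\mathit{HF}(L_0,L_1)\otimes\mathbb Z_2[[\theta]]$ converging to $\mathit{HF}_{\text{borel}}$, and combines the two.

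Your proposal is not wrong, but it is aimed at a different target: you are sketching a proof of the Seidel--Smith theorem itself rather than the way this paper invokes it. As an outline of \cite{MR2739000} your description is broadly accurate---the Borel construction via a parametrised family, the $\theta$-filtration, and the localization principle are all there. One point to be careful about: Seidel and Smith do not use the stable normal trivialization to ``perturb the strip equation in the normal directions.'' Rather, they use it to construct a one-parameter deformation of the almost complex structure and of the Lagrangian boundary conditions near $M^{\text{inv}}$ so that, after deformation, the Maslov index of a strip in $M^{\text{inv}}$ computed in $M$ agrees with its index computed in $M^{\text{inv}}$ (this is where the relative bundle data over $[0,1]$ enters). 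The equivariant transversality then comes for free because the normal operator has index zero and one can arrange it to be an isomorphism. Your phrasing ``kills the normal cokernel'' captures the effect but not quite the mechanism.
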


This paper is organized as follows: In Section 2 we recall the construction and important properties of link Floer homology.  In Section 3 we construct equivariant Heegaard diagrams for periodic knots, prove Theorem \ref{Murasugi Theorem} from these diagrams, and explain how Corollaries \ref{Genus Inequality Corollary} and \ref{Fiberedness Corollary} follow from Theorems \ref{Link Floer Homology Spectral Sequence} and \ref{Knot Floer Homology Spectral Sequence}. We also explain Example \ref{Main Example}. In Section 4 we briefly review Seidel and Smith's localization theory for Floer cohomology, and describe the symmetric products to which we will apply it.  In Section 5 we provide a description of the homotopy type and cohomology of the most general of these symmetric products (which contains the symmetric products used to compute knot and link Floer homology as submanifolds).  In Section 6 we give a proof that this general symmetric product carries a stable normal trivialization, which will imply that the symmetric products used in the computation of knot and link Floer homology do as well.  In Section 7 we compute the spectral sequences of Theorems \ref{Link Floer Homology Spectral Sequence} and \ref{Knot Floer Homology Spectral Sequence} for the unknot and the trefoil as doubly-periodic knots as examples.

\subsection{Acknowledgements}

I am grateful to Robert Lipshitz for suggesting this problem, providing guidance, and reading a draft of this paper.  Many thanks also to Allison Gilmore,  Matthew Hedden, Jennifer Hom, Tye Lidman, Ciprian Manolescu, and Dylan Thurston for helpful conversations, and to Adam Levine for pointing out the argument of Lemma \ref{Generator Matching Lemma}.  I am also indebted to Chuck Livingston and Paul Kirk for their enthusiasm and commentary, particularly concerning the grading arguments in the proofs of Theorems \ref{Alexander Gradings Link Lemma} and \ref{Alexander Gradings Knot Lemma}. Furthermore, thanks to reviewer for clear and helpful comments, and in particular for suggesting considering a specific quotient knot in constructing Example \ref{Main Example}.

I was partially supported by an NSF grant number DMS-0739392. Most of the content of this paper also appeared in my Ph.D. thesis \cite{HendricksThesis}.

\section{Heegaard Floer homology theories} \label{Heegaard Floer Background Section}

We pause for some discussion of link Floer homology in the three sphere, first defined by Ozsv{\'a}th and Szab{\'o} in \cite{MR2443092}. Our purpose is primarily to give notation and to emphasize the points which are important to the proofs in this paper; for a complete treatment, we refer the reader to \cite{MR2443092}. All work is done over $\mathbb Z_2$.

\begin{definition}
A \textit{multipointed Heegaard diagram} $\mathcal D = (S, \boldsymbol \alpha, \boldsymbol \beta, {\bf w}, {\bf z})$ consists of the following data.

\begin{itemize}
\item An oriented surface $S$ of genus $g$. 
\item Two sets of basepoints ${\bf w} = (w_1,\ldots,w_{n})$ and ${\bf z} = (z_1,\ldots,z_{n})$. 
\item Two sets of closed embedded curves $\boldsymbol \alpha = \{\alpha_1,\ldots,\alpha_{g+n-1}\}$ and $\boldsymbol \beta = \{\beta_1,\ldots,\beta_{g+n-1}\}$ such that each of $\boldsymbol \alpha$ and $\boldsymbol \beta$ spans a $g$-dimensional subspace of $H_1(S)$, $\alpha_i \cap \alpha_j = \emptyset = \beta_i \cap \beta_j$ for $i\neq j$, each $\alpha_i$ and $\beta_j$ intersect transversely, and each component of $S - \bigcup \alpha_i$ and of $S - \bigcup \beta_i$ contain exactly one point of ${\bf w}$ and one point of ${\bf z}$. 

\end{itemize}

\end{definition}

From any Heegaard diagram $\mathcal D$, we may construct a unique $3$-manifold $Y$ containing a link $L$; in this paper we are only interested in the case that $Y=S^3$. In particular, to recover the link, we connect the $w$ basepoints to the $z$ basepoints in the complement of the curves $\beta_i$, then connect the $z$ basepoints to the $w$ basepoints in the complement of the curves $\alpha_i$, letting the first set of arcs overcross the second. We number our basepoints such that if $L = K_1 \cup \cdots \cup K_{\ell}$ is the link produced, there are $n_j$ pairs of basepoints on $K_j$, and there are integers $0 =k_0<k_1<\cdots<k_{\ell} =n$ with $k_j - k_{j-1} = n_j$ such that $w_{k_{j-1}+1},\ldots,w_{k_j}, z_{k_{j-1}+1},\ldots,z_{k_{j}}$ are the basepoints on $K_j$.  (In the examples we are actually interested in, we will have $L = K_1 \cup K_2$ with $n_1$ pairs of basepoints on $K_1$ and a single pair of basepoints on $K_2$, so the notation will not be too bad.)

We impose one further technical condition on our Heegaard diagram. Call the components of $S - \bigcup \alpha_i - \bigcup \beta_i$ the \textit{elementary regions} of the Heegaard diagram.

\begin{definition}
A \textit{periodic domain} $P$ on $S \backslash {\bf w}$ is a linear combination of elementary regions whose boundary may be expressed as a linear combination of the $\alpha$ and $\beta$ curves. 
\end{definition}

We say that $D$ is \textit{weakly admissible} if every periodic domain on $S$ has both positive and negative local multiplicities, and require that any Heegaard diagram we use to compute link Floer homology have this property.

The link Floer homology $\widehat{\mathit{HFL}}(Y,L)$ is the Lagrangian Floer cohomology of the two Lagrangian tori $\mathbb T_{\boldsymbol \alpha} = \alpha_1 \times\cdots \times \alpha_{g+n -1}$ and $\mathbb T_{\boldsymbol \beta} = \beta_1 \times\cdots\times \beta_{g+n-1}$ inside the symmetric product $\operatorname{Sym}^{g+n-1}(S \backslash ({\bf w} \cup {\bf z}))$. Therefore the chain complex $\widehat{\mathit{CFL}}(\mathcal D)$ for knot Floer homology is generated by the finite set of intersection points of $\mathbb T_{\boldsymbol \alpha}$ and $\mathbb T_{\boldsymbol \beta}$. (More concretely, a generator of $\widehat{\mathit{CFL}}(\mathcal D)$ is a point ${\bf x} = (x_1 \cdots x_{g+n -1}) \in \operatorname{Sym}^{g+n-1}(S)$ such that each $\alpha$ or $\beta$ curve contains a single $x_i$.) The differential $\partial$ counts holomorphic representatives of homotopy classes of Whitney disks $[\phi] \in \pi_2({\bf x}, {\bf y})$ whose image lies in the symmetric product of the punctured sphere. Here a Whitney disk is a map $\phi: B_1(0) \rightarrow \operatorname{Sym}^{g+n-1}(S)$ which maps the left half of the boundary of the unit disk $B_1(0) \cup \mathbb C$ to $\mathbb T_{\boldsymbol \beta}$ and the right half to $\mathbb T_{\boldsymbol \alpha}$, and has $\phi(-i) = {\bf x}$, $\phi(-i) = {\bf y}$.

\subsection{The Maslov index and grading} Recall that if $\phi: B_1(0) \rightarrow \operatorname{Sym}^{g+n-1}(S)$ is a Whitney disk, we can associate to $\phi$ its shadow  $D = \Sigma a_i D_i$ on the Heegaard diagram, where $D_i$ are the closures of the elementary regions of $\mathcal D$ and $a_i$ is the algebraic multiplicity of the intersection of the holomorphic submanifold $V_{x_i} = \{x_i\} \times \operatorname{Sym}^{g+n-2}(S)$ with $\phi(B_1(0))$ for any interior point $x_i$ of $D_i$. The boundary of $D$ consists of $\alpha$ arcs from points of ${\bf x}$ to points of ${\bf y}$ and $\beta$ arcs from points of ${\bf y}$ to points of ${\bf x}$. If $D_i$ contains a basepoint $z_j$, we let $a_i = n_{z_i}(\phi)$ be the algebraic intersection number of $z_i \times \operatorname{Sym}^{g+n-2}(S)$ with the image of $\phi$.

Given $\phi \in \pi_2({\bf x}, {\bf x})$, we define the \textit{Maslov index} as follows. Recall that $\phi \co D^2 \rightarrow \operatorname{Sym}^{g+n-1}(S)$ maps the portion of the boundary of the unit disk $D^2$ in the right half of the complex plane $\mathbb C = \{u+iv: u,v \in \mathbb R\}$ to a loop in $\mathbb T_{\boldsymbol \alpha}$ and the portion of the boundary in the left half to $\mathbb T_{\boldsymbol \beta}$.  Choose a constant trivialization of the orientable real vector bundle $\phi^*(T(\mathbb T_{\boldsymbol \alpha}))$ over $\partial D|_{v\geq 0}$.  We may tensor this real trivialization with $\mathbb C$ and extend to a complex trivialization of $\phi^*(T(\operatorname{Sym}^{g+n-1}(S))$ by pushing across the disk linearly. Relative to this trivialization, the real bundle $\phi^*(T(\mathbb T_{\boldsymbol \beta}))$ over $\partial D|_{v \geq 0}$ induces a loop of real subspaces of $\mathbb C^{g+n-1} = \phi*(T\operatorname{Sym}^{g+n-1}(S))$.  The winding number of this loop is the Maslov index of the map $\phi$.  Notice that we could also have used $\phi^*(J(T(\mathbb T_{\boldsymbol \alpha})))$ and $\phi^*(T(\mathbb T_{\boldsymbol \beta}))$, where $J$ is the complex structure on the vector bundle $T\operatorname{Sym}^{g+n-1}(S)$, and obtained the same number.

The Maslov index $\mu(\phi)$ can equivalently be computed using the associated domain $\Sigma a_i D_i$ in a formula of Lipshitz's \cite[Proposition 4.2]{MR2240908}. For each domain $D_i$, let $e(D_i)$ be the Euler measure of $D_i$. In particular, if $D_i$ is a disk with $2k$ corners, $e(D_i) = 1- \frac{k}{2}$. Let $p_{{\bf x}}(D)$ be the sum of the average of the multiplicities of $D$ at the four corners of each point in ${\bf x}$ and likewise for $p_{{\bf y}}(D)$. Then the Maslov index is
\begin{equation} \label{Maslov index formula}
\mu(\phi) = \sum a_i e(D_i) + p_{{\bf x}}(D) + p_{{\bf y}}(D).
\end{equation}

In the case that $[\phi] \in \pi_2({\bf x}, {\bf x})$ is a domain from ${\bf x}$ to itself, and therefore a periodic domain, we have the following alternate interpretation of the Maslov index, which will be very important to the proof of the main theorems of this paper. Because $\phi(i) = \phi(-i)$, we see that $\phi$ sends $\partial D|_{v \geq 0}$ to a loop in $\mathbb T_{\boldsymbol \alpha}$ and $\partial D_{v \leq 0}$ to a loop in $\mathbb T_{\boldsymbol \beta}$. Therefore we may replace $\phi$ by a map $\widehat{\phi}\co S^2 \times I \rightarrow \operatorname{Sym}^{g+n-1}(S)$ which maps $S^1 \times \{1\}$ to $\mathbb T_{\boldsymbol \alpha}$ and $S^1 \times \{0\}$ to $\mathbb T_{\boldsymbol \beta}$.  We then consider the complex pullback bundle $E = \widehat{\phi}^*(T(\operatorname{Sym}^{g+n-1}(S))$ to $S^2 \times I$ and the totally real subbundles $\widehat{\phi}|_{S^1 \times \{1\}}^{*} (T(\mathbb T_{\boldsymbol \alpha}))$ of $E|_{S^1 \times \{1\}}$ and  $\widehat{\phi}|_{S^1 \times \{0\}}^{*} (T(\mathbb T_{\boldsymbol \beta}))$ of $E|_{S^1 \times \{0\}}$. The Maslov index is still calculated by trivializing $\widehat{\phi}|_{S^1 \times \{1\}}^{*} (T(\mathbb T_{\boldsymbol \alpha}))$, complexifying, and computing the winding number of the loop of real-half dimensional subspaces in $\mathbb C^{g+n-1}$ represented by $\widehat{\phi}|_{S^1 \times \{0\}}^{*} (T(\mathbb T_{\boldsymbol \beta}))$ with respect to the trivialization. This number classifies the bundle in the following way: complex vector bundles over the annulus whose restriction to the boundary of the annulus carries a canonical real subbundle are in bijection with maps $[(S^1 \times I, \partial(S^1 \times I)), (BU,BO)] = \langle (S^1 \times I, \partial(S^1 \times I)), (BU,BO) \rangle \cong \mathbb Z$, where the map to $\mathbb Z$ is the Maslov index $\mu(\phi)=\mu(\widehat{\phi})$ \cite[Theorem C.3.7]{MR2045629}.

Now let us look at the bundle $E$ over $S^1 \times I$ and its real subbundles over the boundary components of $S^1 \times I$ from a slightly different perspective. The real bundles $\widehat{\phi}|_{S^1 \times \{1\}}^{*} (T(\mathbb T_{\boldsymbol \alpha}))$ and  $\widehat{\phi}|_{S^1 \times \{0\}}^{*} (T(\mathbb T_{\boldsymbol \beta}))$ are orientable, hence trivializable over the circle, so we may choose real trivializations and tensor with $\mathbb C$ to obtain a complex trivialization of $E|_{\partial(S^1 \times I)}$.  We can now regard $E$ as a relative vector bundle $E_{\operatorname{rel}}$ over $(S^1 \times I, \partial(S^1 \times I))$, and consider its relative first Chern class $c_1(E|_{\operatorname{rel}})$.  Equivalently, we may use this trivialization to construct a vector bundle $\widetilde{E}$ over $(S^1 \times I)/ \partial(S^1 \times I) \cong S^2$ such that the pullback $q^*(\widetilde{E})$ along the quotient map is $E$. Then $c_1(\widetilde{E})$ is the relative first Chern class $c_1(E|_{\operatorname{rel}})$ under the identification $H^1(S^2) \simeq H^1(S^1\times I, \partial(S^1 \times I))$. Moreover, isomorphism classes of vector bundles over $S^2$ are in bijection with homotopy classes of maps $[S^2, BU] \cong \langle S^2, BU \rangle = \pi_2(BU) \cong \mathbb Z$, where the identification with $\mathbb Z$ is via the first Chern class.  Using the homotopy long exact sequence of the pair $(BU,BO)$, we observe the following relationship between $\mu$ and $c_1$.
\[
\xymatrix{
\pi_2(BU) \ar[r] \ar[d]^{c_1}& \pi_2(BU, BO) \ar[r] \ar[d]^{\mu} & \pi_1(BO) \ar[d] \\
\mathbb Z \ar[r]^-{\times 2} & \mathbb Z \ar[r] & \mathbb Z_2
}
\]
Therefore the Maslov index $\mu(\phi)$ is twice the relative first Chern class $c_1(E|_{\operatorname{rel}})$.

%The differential $\partial$ on $\widehat{\mathit{CFL}}(\mathcal D)$ counts the dimension of the moduli spaces of pseudo-holomorphic curves of Maslov index one in $\pi_2(\bf{x, y})$.
%
%\begin{align*}
%\partial({\bf x}) = \sum_{{\bf y} \in \mathbb T_{\boldsymbol \alpha} \cap %\mathbb T_{\boldsymbol \beta}} \sum_{\substack{\phi \in \pi_2({\bf x, y}) : \\
%	 \mu(\phi) =1 \\
%	  n_{w_i}(\phi) = 0 \\
%	  n_{z_j}(\phi) = 0 
%	  }}
%	  \#\left(\frac {M(\phi)}{\mathbb R}\right) {\bf y}
%\end{align*}
%
%\noindent Ozsv{\'a}th and Szab{\'o} have shown \cite{MR2065507} that this is a well-defined differential. 

The complex $\widehat{\mathit{CFL}}(\mathcal D)$ carries a (relative, for our purposes) homological grading called the Maslov grading $M({\bf x})$ which uses the Maslov index and takes values in $\mathbb Z$.  Suppose ${\bf x}$ and ${\bf y}$ are connected by a Whitney disk $\phi$. Then the relative Maslov grading is determined by
\begin{align*}
M({\bf x}) - M({\bf y}) &= \mu(\phi) - 2\sum_i n_{w_i}(\phi).
\end{align*}
A formula for the absolute Maslov grading may be found in \cite[Theorem 3.3]{MR2249248}, but will not be needed in this paper.

\subsection{The Alexander grading and relative $\operatorname{spin}^{\operatorname{c}}$ structures} The complex $\widehat{\mathit{CFL}}(\mathcal D)$ also carries an Alexander multigrading ${\bf A} = (A_1,\ldots,\linebreak A_{\ell})$. This multigrading takes values in an affine lattice $\mathbb H$ over  $H_1(S^3-L; \mathbb Z)\cong H_1(L)$.  Recall that $H_1(S^3-L; \mathbb Z) \cong \mathbb Z^{\ell}$ generated by the homology classes of meridians $\mu_{K_j}$ of the component knots $K_j$ of $L$.  Define the lattice $\mathbb H$ to consist of elements
\begin{align*}
\sum_{i=1}^{\ell} a_i[\mu_{K_i}]
\end{align*}
\noindent where $a_i \in \mathbb Q$ satisfies the property that $2a_i + \operatorname{lk}(K_i, L-K_i)$ is an even integer. To determine the relative Alexander multigrading, recall that the basepoints $w_{k_{j-1} +1},\ldots,w_{k_j},z_{k_{j-1} +1},\ldots,z_{k_j}$ lie on $K_j$ (with the convention that $k_0 = 0$). Then once again if $\phi$ is a Whitney disk connecting ${\bf x}$ and ${\bf y}$,
\begin{align*}
A_j({\bf x}) - A_j({\bf y}) &= \sum_{i=k_{j-1} +1}^{k_j} n_{z_i}(\phi) - \sum_{i=k_{j-1}+1}^{k_j} n_{w_i}(\phi).
\end{align*}
We can also see the relative Alexander multigrading as a linking number.  Let ${\bf x},{\bf y} \in \mathbb T_{\boldsymbol \alpha} \cap \mathbb T_{\boldsymbol \beta}$, we find paths
\begin{align*}
a\co[0,1]\rightarrow \mathbb T_{\boldsymbol \alpha}  \quad\text{and}\quad  b\co [0,1]\rightarrow \mathbb T_{\boldsymbol \beta}
\end{align*}
\noindent such that $\partial a = \partial b = {\bf x} - {\bf y}$.  (For example, $a \cup b$ may be the boundary of a Whitney disk $\phi$ from ${\bf y}$ to ${\bf x}$.)  View these paths as as one-chains on $S \backslash ({\bf w} \cup {\bf z})$. Since attaching one- and two-handles to the $\alpha$ and $\beta$ curves on $S$ and filling in three-balls at the basepoints yields $Y$, we obtain a trivial one-cycle in $Y$.  Indeed, a domain $D$ on $\mathcal D$ is the shadow of a Whitney disk if and only if its boundary, viewed as a cycle on $S$, descends to a trivial cycle on $Y$.  However, if we attach $\alpha$ and $\beta$ circles to $S \backslash ({\bf w} \cup {\bf z})$ (and no three balls) we obtain the manifold $Y-L$ and a one cycle $\epsilon({\bf x}, {\bf y})$ in $Y-L$. 
\begin{align*}
\underline{\epsilon}_{{\bf w,z}}\co (\mathbb T_{\boldsymbol \alpha} \cap \mathbb T_{\boldsymbol \beta}) \times (\mathbb T_{\boldsymbol \alpha} \cap \mathbb T_{\boldsymbol \beta}) \rightarrow H_1(Y-L; \mathbb Z)
\end{align*}
We obtain the following lemma (which has only been very slightly adjusted from the original to account for the possibility of multiple pairs of basepoints on a link component).

\begin{lemma} \cite[Lemma 3.10]{MR2443092} An oriented $\ell$-component link $L$ in $Y$ induces a map
\begin{align*}
\Pi \co H_1(Y-L) \rightarrow \mathbb Z^{\ell}
\end{align*}
where $\Pi_i(\gamma)$ is the linking number of $\gamma$ with the $i$th component $K_i$ of $L$.  In particular, for ${\bf x},{\bf y} \in \mathbb T_{\boldsymbol \alpha}\cap \mathbb T_{\boldsymbol \beta}$ and $\phi \in \pi_2({\bf x}, {\bf y})$, we have
\begin{align*}
\Pi_i(\underline{\epsilon}_{{\bf w,z}}({\bf x},{\bf y})) = \sum_{n_{i-1}+1}^{n_i} n_{z_j}(\phi) - \sum_{n_{i-1}+1}^{n_i} n_{z_j}(\phi).
\end{align*}

\end{lemma}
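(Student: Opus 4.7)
The first assertion---that the linking numbers with the various components of $L$ assemble into a well-defined homomorphism $\prod: H_1(Y-L)\to\mathbb Z^{\ell}$---is standard: linking number is a homology invariant of cycles in the link complement and is linear in the first argument, so each $\prod_i$ extends to a homomorphism on $H_1(Y-L)$. Moreover, $H_1(S^3-L)\cong\mathbb Z^{\ell}$ has basis given by the meridians $[\mu_1],\ldots,[\mu_{\ell}]$, and with respect to this basis the map $\prod_i$ is simply projection onto the $i$th factor. It therefore suffices to express the class $[\underline{\epsilon}_{{\bf w,z}}({\bf x},{\bf y})]\in H_1(Y-L)$ in the meridian basis.

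To do so, the plan is to use the associated domain $D=\sum a_iD_i$ of the Whitney disk $\phi$ as a bounding $2$-chain. The domain $D$ is a $2$-chain on $S\setminus\{{\bf w}\}$; removing small open disks around each point of ${\bf w}\cup {\bf z}$ produces a $2$-chain $D'$ lying in $S\setminus\{{\bf w,z}\}\subset Y-L$. Its boundary in $Y-L$ consists of two pieces: first, the arcs of $\partial D$ on the $\alpha$ and $\beta$ curves, which together give paths $a\subset\mathbb T_{\boldsymbol \alpha}$ and $b\subset\mathbb T_{\boldsymbol \beta}$ with $\partial a=\partial b={\bf x}-{\bf y}$, i.e., precisely a representative of $\underline{\epsilon}_{{\bf w,z}}({\bf x},{\bf y})=a-b$; second, small circles around each basepoint appearing with multiplicities $n_{w_j}(\phi)$ and $n_{z_j}(\phi)$.

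The next step is to identify these small circles with meridians of the link components. By the construction of $L$ from $\mathcal D$ (connecting $z$ to $w$ in the complement of $\boldsymbol\alpha$ and pushing into $H_{\boldsymbol \alpha}$, and $w$ to $z$ in the complement of $\boldsymbol\beta$ and pushing into $H_{\boldsymbol \beta}$), the small circle bounding a neighborhood of $w_j$ is isotopic in $Y-L$ to $-[\mu_{i(j)}]$ and the circle around $z_j$ to $+[\mu_{i(j)}]$ (or the reverse, depending on orientation conventions), where $i(j)$ is the index of the component of $L$ containing the basepoint. Since $D'$ is a $2$-chain in $Y-L$, we have $[\partial D']=0$ in $H_1(Y-L)$, whence
\[
[\underline{\epsilon}_{{\bf w,z}}({\bf x},{\bf y})] \;=\; \sum_{j}\bigl(n_{z_j}(\phi)-n_{w_j}(\phi)\bigr)\,[\mu_{i(j)}].
\]
Applying $\prod_i$ picks out the coefficient of $[\mu_i]$, and by the indexing convention that $w_{k_{i-1}+1},\ldots,w_{k_i},z_{k_{i-1}+1},\ldots,z_{k_i}$ all lie on $K_i$, this coefficient is exactly $\sum_{j=k_{i-1}+1}^{k_i}n_{z_j}(\phi)-\sum_{j=k_{i-1}+1}^{k_i}n_{w_j}(\phi)$.

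The only delicate point is fixing the sign conventions so that the small circles around $z$- and $w$-basepoints come with opposite signs relative to their meridian; this is the main obstacle, but it is purely bookkeeping once the orientation on $L$ induced by the Heegaard diagram is fixed. No holomorphic input from $\phi$ is required---only that its homology class determines a $2$-chain with the multiplicities $n_{w_j}(\phi),n_{z_j}(\phi)$ at the basepoints---so the argument is essentially a homological calculation applied to the domain $D$.
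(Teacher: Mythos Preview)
Your proof is correct and follows essentially the same idea as the paper's proof. The paper's argument is a one-line sketch: the Whitney disk $\phi$ induces a nullhomology of $\underline{\epsilon}({\bf x},{\bf y})$ in $Y$, and the linking number with $K_i$ is computed as the algebraic intersection of this nullhomology with $K_i$, which picks up contributions $n_{z_j}(\phi)-n_{w_j}(\phi)$ at the basepoints on $K_i$. Your version unpacks the same computation by excising small disks around the basepoints and reading off the meridian coefficients in $H_1(Y-L)$; this is the dual formulation of the same linking-number calculation, with the added benefit of making the sign bookkeeping explicit.
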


\begin{proof}
The proof is nearly identical to the original: $\phi$ induces a nulhomology of $\underline{\epsilon}({\bf x}, {\bf y})$, which meets the $i$th component $K_i$ of $L$ with intersection number $\sum_{n_{i-1}}^{n_1} n_{z_i}(\phi) - \sum_{n_{i-1}}^{n_1} n_{w_i}(\phi)$.
\end{proof}

There is another less practical, but more intrinsic, method of thinking about Alexander multigradings via \textit{relative $\operatorname{spin}^{\operatorname{c}}$ structures}. Let $(N, \partial N)$ be a manifold with toroidal boundary $T_1 \cup \cdots \cup T_{\ell}$. The boundary of a torus contains, up to isotopy, a canonical nowhere-vanishing vector field preserved by translation. Therefore we consider nowhere-vanishing vector fields $v$ on $N$ which restrict to the canonical vector field on each boundary component of $N$. In this case vector fields $v$ and $v'$ are homologous if they are homotopic on $N-B$ for $B$ a ball in $N^{\circ}$. The set of such homotopy classes is the set of relative $\operatorname{spin}^{\operatorname{c}}$ structures, and is an affine space for the relative homology $H^2(N, \partial N)$. This space is denoted $\underline{\operatorname{Spin}}^{\operatorname{c}}(N, \partial N)$. If $v$ is a nowhere-vanishing vector field on $N$ with canonical restriction to the toroidal boundary $\partial N$, the restriction of the field of two-planes $v^{\bot}$ has a canonical trivialization along $\partial N$. Therefore there is a well-defined notion of the relative first Chern class $c_1(\underline{\mathfrak{s}})$ of a relative $\operatorname{spin}^{\operatorname{c}}$ structure.

There is a canonical way of associating to every generator ${\bf x}$ in $\mathbb T_{\boldsymbol \alpha} \cap \mathbb T_{\boldsymbol \beta}$ a relative $\operatorname{spin}^{\operatorname{c}}$ structure $\underline{\mathfrak{s}}_{{\bf w, \bf z}}({\bf x})$ on $(S^3 - \nu(L))$ by modifying the gradient vector field of the Morse function on $S^3$ corresponding to the Heegaard diagram near flowlines corresponding to ${\bf x}$. This is described in detail in \cite[Section 3.3]{MR2443092}. Then the relationship between this relative $\operatorname{spin}^{\operatorname{c}}$ structure and the Alexander grading as defined previously is
\begin{align*}
c_1(\underline{\mathfrak{s}}_{{\bf w, \bf z}}({\bf x})) + \sum_{i=1}^{\ell} PD[\mu_{K_i}] = 2PD\left(A_1[\mu_1]+\cdots A_{\ell}[\mu_{\ell}]\right).
\end{align*}

Finally, before moving on, observe that if $\mathcal D$ is a Heegaard diagram for $(Y, L)$, then there is an identification
\begin{align*}
&H_2(Y-\nu(L), \partial(Y - \nu(L)) = H_1(Y - L)\\
\cong\;& \frac{H_1(S\backslash \{{\bf w\cup z}\})}{\langle [\alpha_1],\ldots, [\alpha_{g_n-1}], [\beta_1],\ldots,[\beta_{g+n-1}] \rangle} \cong \frac{H_1(\mathrm{Sym}^{g+n-1}(S \backslash\{\bf w\cup z\}))}{H_1(\mathbb T_{\boldsymbol \alpha}) \oplus H_1(\mathbb T_{\boldsymbol \beta})}.
\end{align*}
Under this identification the set $\underline{\text{Spin}}^{\text{c}}(Y-\nu(L), \partial (Y-\nu(L)))$ is canonically identified with the set of homotopy classes of paths $\mathcal P(\mathbb T_{\boldsymbol \alpha}, \mathbb T_{\boldsymbol \beta})$. We will discuss the homology and cohomology of punctured symmetric products at greater length in Section \ref{Geometry Section}.

\subsection{Link Floer homology and an alternate differential} The differential $\partial$ lowers the Maslov grading by one and preserves the Alexander multigrading. Therefore $\widehat{\mathit{CFL}}(\mathcal D)$ also splits along Alexander grading.

The homology of $\widehat{\mathit{CFL}}(\mathcal D)$ with respect to the differential $\partial$ is very nearly the link Floer homology of $(Y,K)$. There is, however, a slight subtlety having to do with the number of pairs of basepoints $z_i$ and $w_i$ on $\mathcal D$. Let $V_i$ be a vector space over $\mathbb Z_2$ with generators in gradings $(M,(A_1,\ldots,A_{\ell})) = (0,(0,\ldots,0))$ and $(M,(A_1,\ldots,A_j,\ldots,A_{\ell})) = (-1,(0,\ldots,-1,\ldots,0))$, with the $-1$ in the $j$th component.  As before, let $K_j$ carry $n_j$ pairs of basepoints.

\begin{definition}
The homology of the complex $\widehat{\mathit{CFL}}(\mathcal D)$ with respect to the differential $\partial$ is
\[
\widetilde{\mathit{HFL}}(\mathcal D) = \widehat{\mathit{HFL}}(S^3,L) \otimes V_1^{\otimes (n_1-1)}\otimes\cdots\otimes V_{\ell}^{\otimes (n_{\ell}-1)}.
\]

\end{definition}

The theory $\widehat{\mathit{HFL}}(S^3,L)$ is symmetric with respect to the Alexander multigrading as follows.  Let $\widehat{\mathit{HFL}}_d(S^3,L, (A_1,\ldots,A_\ell))$ be the summand of the link Floer homology of $L$ in Alexander multigrading $(A_1,\ldots,A_\ell)$ and Maslov grading $d$.

\begin{proposition} \cite[Proposition 8.2]{MR2443092} There is an isomorphism 
\[
\widehat{\mathit{HFL}}_d(S^3, L, (A_1,\ldots,A_\ell)) \cong \widehat{\mathit{HFL}}_{d-\sum A_i}(S^3, L, (-A_1,\ldots,-A_\ell)).
\]

\end{proposition}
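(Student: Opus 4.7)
The plan is to exhibit an isomorphism of chain complexes by exchanging the two sets of basepoints in a Heegaard diagram. Given an admissible multipointed Heegaard diagram $\mathcal{D} = (S, \boldsymbol{\alpha}, \boldsymbol{\beta}, \mathbf{w}, \mathbf{z})$ for $(S^3, L)$, I would form the \emph{conjugate diagram} $\mathcal{D}^{\vee} = (S, \boldsymbol{\alpha}, \boldsymbol{\beta}, \mathbf{z}, \mathbf{w})$ in which the two sets of basepoints switch roles. Geometrically $\mathcal{D}^{\vee}$ presents the same link $L$ but with the orientation of every component reversed, so by the invariance of $\widehat{\mathit{HFL}}$ it computes the link Floer homology of $(S^3, L)$ with Alexander multigrading negated.

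The underlying $\mathbb{F}_2$-vector spaces of $\widehat{\mathit{CFL}}(\mathcal{D})$ and $\widehat{\mathit{CFL}}(\mathcal{D}^{\vee})$ are both generated by $\mathbb{T}_{\boldsymbol\alpha}\cap\mathbb{T}_{\boldsymbol\beta}$, and a Whitney disk $\phi$ satisfies the basepoint vanishing conditions $n_{w_i}(\phi) = n_{z_j}(\phi) = 0$ with respect to $\mathcal{D}$ precisely when the analogous conditions hold with respect to $\mathcal{D}^{\vee}$. Hence the two differentials are identical and the identity on generators is a chain isomorphism. All that remains is to track how the two bigradings transform.

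From the relative formulas recalled in Section \ref{Heegaard Floer Background Section}, swapping $\mathbf{w}$ and $\mathbf{z}$ gives
\[
A_j^{\vee}(\mathbf{x}) - A_j^{\vee}(\mathbf{y}) = -\bigl(A_j(\mathbf{x}) - A_j(\mathbf{y})\bigr),
\]
\[
M^{\vee}(\mathbf{x}) - M^{\vee}(\mathbf{y}) = \mu(\phi) - 2\sum_i n_{z_i}(\phi) = \bigl(M(\mathbf{x}) - M(\mathbf{y})\bigr) - 2\sum_i \bigl(A_i(\mathbf{x}) - A_i(\mathbf{y})\bigr),
\]
so the Alexander multigrading is negated on the nose and the relative Maslov grading is shifted by a linear function of $\sum A_i$ in the direction demanded by the proposition.

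The main obstacle is pinning down the absolute normalization, since Section \ref{Heegaard Floer Background Section} only develops the relative versions of both gradings. For this step I would invoke the absolute grading formulas of \cite[Theorem 3.3]{MR2249248} and \cite[Section 4.2, Section 8.1]{MR2443092}, and verify that they are arranged symmetrically in $\mathbf{w}$ and $\mathbf{z}$ once the simultaneous orientation reversal of every component of $L$ is accounted for. This verification reduces the absolute shift to precisely the one dictated by the relative computation above, and combining these identifications with the chain isomorphism of the previous paragraph produces the claimed bigraded symmetry.
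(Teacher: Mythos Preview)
The paper does not give its own proof of this proposition; it is quoted directly from Ozsv\'ath--Szab\'o \cite[Proposition~8.2]{MR2443092} as background, so there is no argument in the present paper to compare your attempt against. Your proposal is essentially a sketch of the standard proof from that reference, but with one choice that introduces a genuine gap.

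The diagram $\mathcal{D}^{\vee}=(S,\boldsymbol\alpha,\boldsymbol\beta,\mathbf{z},\mathbf{w})$ does present $-L$, the link with every component reversed. You then assert that $\widehat{\mathit{HFL}}(\mathcal{D}^{\vee})$ ``computes the link Floer homology of $(S^3,L)$ with Alexander multigrading negated.'' This step is not justified: it presupposes a bigrading-preserving identification $\widehat{\mathit{HFL}}_d(S^3,-L,\mathbf{A})\cong\widehat{\mathit{HFL}}_d(S^3,L,-\mathbf{A})$, and since a link need not be isotopic to its reverse, you cannot appeal to diffeomorphism invariance. Unwinding what this identification would require on the Maslov side is essentially the content of the proposition itself, so the argument as written is circular. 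You correctly flag the absolute normalization as the ``main obstacle,'' but the route through $-L$ makes that obstacle harder, not easier, because the absolute Maslov grading is normalized using the $\mathbf w$ basepoints alone, and your swap changes which set those are.

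The proof in \cite{MR2443092} avoids this by using the conjugate diagram $(-S,\boldsymbol\beta,\boldsymbol\alpha,\mathbf{z},\mathbf{w})$ rather than $(S,\boldsymbol\alpha,\boldsymbol\beta,\mathbf{z},\mathbf{w})$: reversing the surface orientation \emph{and} swapping $\boldsymbol\alpha$ with $\boldsymbol\beta$, in addition to swapping the basepoints, produces a Heegaard diagram for the \emph{same} oriented link $L$ in $S^3$. The tautological bijection on $\mathbb T_{\boldsymbol\alpha}\cap\mathbb T_{\boldsymbol\beta}$ is again a chain isomorphism (holomorphic disks for $J$ correspond to holomorphic disks for $-J$), and now the absolute grading conventions on both sides refer to the same oriented link, so your relative computation---which is correct and is the heart of the matter---pins down the absolute shift directly via the normalization of $\widehat{\mathit{HF}}(S^3)$. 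The fix to your argument is simply to replace $\mathcal{D}^{\vee}$ by this fuller conjugate.
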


In particular, ignoring Maslov gradings, we see that the link Floer homology is symmetric in each of its Alexander gradings.

Before moving on, let us consider one addition differential on the complex $\widehat{\mathit{CFL}}(\mathcal D)$.  Suppose that in addition to the disks we counted previously, we also include disks passing over $z$ basepoints on the component $K_j$ of $L$.  In other words, let the differential $\partial_{K_j}$ be the differential of Lagrangian Floer cohomology in $\operatorname{Sym}^{g+n-1}(S \backslash ({\bf w} \cup \{z_i: z_i \not\in K_j\})$. This has the effect of discounting the contribution of the component $K_j$ to the link Floer homology, but of maintaining the effect of an extra $n_j$ pairs of basepoints on the Heegaard surface.  Ergo we have the following proposition.  Let $W$ be a two-dimensional vector space over $\mathbb Z_2$ with summands in gradings $(M, (A_1,\ldots,\widehat{A_j},\ldots,A_{\ell})) = (0, (0,\ldots,0))$ and $(M, (A_1,\ldots,\widehat{A_j},\ldots, A_{\ell})) = (-1, (0,\ldots,0))$.

\begin{proposition} \label{Ignore Component Proposition} \cite[Proposition 7.2]{MR2443092} The homology of the complex\linebreak $\widehat{\mathit{CFL}}(S^3, L)$ with respect to the differential $\partial_{K_j}$ is isomorphic to $\widehat{\mathit{HFL}}(S^3,\linebreak L-K_j) \otimes V_1^{\otimes (n_1-1)} \otimes\cdots\otimes W^{\otimes n_j} \otimes\cdots\otimes V_{\ell}^{\otimes (n_{\ell} -1)}$.

\end{proposition}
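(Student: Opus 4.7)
The plan is to reinterpret the complex $\bigl(\widehat{\mathit{CFL}}(\mathcal{D}),\partial_{K_j}\bigr)$ as a standard link Floer complex for a Heegaard diagram associated to the sublink $L-K_j$, with the basepoints originally on $K_j$ reinterpreted as free basepoint pairs.

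First I would observe that the differential $\partial_{K_j}$ is tautologically equivalent to the standard link Floer differential on the diagram $\mathcal{D}_0 = (S,\boldsymbol{\alpha},\boldsymbol{\beta},\mathbf{w},\mathbf{z}\setminus K_j)$ obtained by deleting the $z$ basepoints on $K_j$ from the tracked basepoint set; since the generating set $\mathbb{T}_{\boldsymbol{\alpha}}\cap\mathbb{T}_{\boldsymbol{\beta}}$ is unchanged and the disk-counting constraints agree, the chain complex $\bigl(\widehat{\mathit{CFL}}(\mathcal{D}),\partial_{K_j}\bigr)$ is literally the complex associated to $\mathcal{D}_0$. Interpreting $\mathcal{D}_0$ as a Heegaard diagram for $L-K_j$ in which the surviving $n_j$ $w$-basepoints on $K_j$ play the role of ``free'' or ``unpaired'' basepoints (not assigned to any tracked component), I can then apply the standard extra-basepoint formula for multipointed link Floer homology.

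Next I would identify the extra tensor factor produced by the free basepoints. Each of the $n_j$ unpaired $w$ basepoints contributes a two-dimensional $\mathbb{F}_2$-vector space whose Maslov-graded summands sit in degrees $0$ and $-1$, with a trivial contribution to the Alexander multigrading of $L-K_j$ (because the basepoint is not attached to any tracked component and so cannot shift any $A_i$ for $i\neq j$). This is precisely the graded vector space $W$ in the statement, yielding a factor of $W^{\otimes n_j}$. The remaining extra basepoint pairs on components $K_i$ with $i\neq j$ contribute $V_i^{\otimes(n_i-1)}$ in the usual way, and assembling everything gives
\[
H_*\bigl(\widehat{\mathit{CFL}}(\mathcal{D}),\partial_{K_j}\bigr) \cong \widehat{\mathit{HFL}}(S^3,L-K_j) \otimes V_1^{\otimes(n_1-1)} \otimes \cdots \otimes W^{\otimes n_j} \otimes \cdots \otimes V_\ell^{\otimes(n_\ell-1)}.
\]

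The main obstacle is justifying the extra-basepoint formula for a diagram with more $w$-basepoints than $z$-basepoints, which is not literally a multipointed Heegaard diagram in the definition used above. The cleanest fix is a local free-stabilization argument: near each unpaired $w_s$ on $K_j$, add a small pair of curves $(\alpha'_s,\beta'_s)$ intersecting transversely in two points and place a new $z'_s$ basepoint so that the resulting diagram is a genuine weakly admissible Heegaard diagram for $L-K_j$ in which the $n_j$ auxiliary pairs do not attach to any tracked component. One must then verify that this stabilization preserves the chain-level homotopy type, produces exactly the tensor factor $W$ per auxiliary pair (rather than, say, $V_j$ carrying a spurious $j$th Alexander grading), and does not introduce new pseudo-holomorphic disks beyond those accounted for in $W$. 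This is the technical core of the argument implicit in the original \cite[Proposition 7.2]{MR2443092}.
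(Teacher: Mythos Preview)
The paper does not give its own proof of this proposition: it is stated as a citation of \cite[Proposition~7.2]{MR2443092} and is followed only by an informal remark reinterpreting it as a spectral sequence from $\widehat{\mathit{HFL}}(S^3,L)$ to $\widehat{\mathit{HFL}}(S^3,L-K_j)$. So there is no in-paper argument to compare your proposal against.

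That said, your sketch is the standard route and is essentially what underlies the original Ozsv\'ath--Szab\'o argument: forget the $z$-basepoints on $K_j$, view the resulting diagram as a multipointed diagram for $L-K_j$ with $n_j$ extra free basepoints, and invoke the extra-basepoint stabilization formula to identify each free basepoint with a tensor factor of $W$ (Maslov degrees $0,-1$, trivial Alexander multigrading on the remaining components). Your identification of the ``main obstacle'' is also accurate: the only nontrivial point is checking that a free $w$-basepoint contributes exactly $W$ rather than something carrying residual grading data, and this is handled by a local index-zero stabilization argument in the cited reference. In short, your proposal is correct and matches the literature proof, but there is nothing in the present paper to weigh it against.
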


We may think of Proposition \ref{Ignore Component Proposition} as the assertion that there is a spectral sequence from the $\mathbb Z^{\ell}$ graded theory $\widehat{\mathit{HFL}}(S^3, L)$ to the $\mathbb Z^{\ell-1}$ graded theory $\widehat{\mathit{HFL}}(S^3, L-K_j)$ by computing all differentials that change the $k_j$th entry of the $\mathbb Z^{\ell}$ multigrading.  This spectral sequence comes with an overall shift in relative Alexander gradings, which is computed by considering fillings of relative $\operatorname{spin}^{\operatorname{c}}$ structures on $S^3-L$ to relative $\operatorname{spin}^{\operatorname{c}}$ structures on $S^3 - (L- K_j)$.  The generally slightly complicated formula admits a simple expression in the case of two-component links, which is the only case of interest to this paper.

\begin{lemma} \label{Gradings Shift Lemma} \cite[Lemma 3.13]{MR2443092} Let $L = K_1 \cup K_2$, and $\lambda= \operatorname{lk}(K_1,K_2)$. Then suppose $\mathcal D$ is a Heegaard diagram for $(S^3,L)$, and ${\bf x} \in \widetilde{\mathit{CFL}}(\mathcal D)$.  If $(A_1({\bf x}), A_2({\bf x})) = (i,j)$ in the complex $\widetilde{\mathit{CFL}}(\mathcal D)$ with differential $\partial$, then in the complex $\widetilde{\mathit{CFK}}(\mathcal D)$ with differential $\partial_{K_2}$, the Alexander grading of ${\bf x}$ is $A_1({\bf x}) = i - \frac{\lambda}{2}$.

\end{lemma}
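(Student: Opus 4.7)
The plan is to verify the lemma in two steps: first, show that the $A_1$ gradings in the two complexes differ by a constant independent of ${\bf x}$ (and in particular of $j$); second, identify this constant as $-\lambda/2$.

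For the first step, I would observe that the relative $A_1$-grading formula
\begin{align*}
A_1({\bf x}) - A_1({\bf y}) = \sum_{i=k_0+1}^{k_1} n_{z_i}(\phi) - \sum_{i=k_0+1}^{k_1} n_{w_i}(\phi)
\end{align*}
involves only the basepoints on $K_1$ and is valid for any Whitney disk $\phi \in \pi_2({\bf x},{\bf y})$. Since the topological set $\pi_2({\bf x},{\bf y})$ is intrinsic to $\mathcal D$ and is unchanged when we pass from $\partial$ to $\partial_{K_2}$ (only the counting rule on disks changes), the same formula applies in both complexes. Hence the two absolute $A_1$ gradings differ by a constant $c$ depending only on $L$, independent of ${\bf x}$ and in particular of $j$.

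For the second step, I would pin down $c$ using the identification of absolute Alexander gradings with pairings against relative $\text{spin}^c$ structures as in \cite[Section 8.1]{MR2443092}. The generator ${\bf x}$ determines a relative $\text{spin}^c$ structure $\mathfrak{s}_{{\bf w},{\bf z}}({\bf x}) \in \underline{\text{Spin}}^c(S^3,L)$, and $A_1({\bf x})$ is computed as one-half the pairing of its first Chern class against the class of a capped Seifert-like surface $\hat{F}_1$ for $K_1$ in the link complement. Filling in the $z$-basepoints on $K_2$ corresponds geometrically to filling in a tubular neighborhood of $K_2$, which yields a map from relative $\text{spin}^c$ structures on $S^3 - L$ to those on $S^3 - K_1$. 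Under this filling the capped surface $\hat{F}_1$, which intersects $K_2$ algebraically $\lambda$ times, is replaced by a Seifert surface for $K_1$ in $S^3 - K_1$, and the change in the first Chern class pairing is exactly $-\lambda$, yielding the Alexander shift of $-\lambda/2$. The main technical obstacle is carefully unwinding the sign conventions; as a consistency check, the graded Euler characteristics of $\widehat{\mathit{HFL}}(S^3,L)$ and $\widehat{\mathit{HFK}}(S^3,K_1)$ recover the multivariable and single-variable Alexander polynomials respectively (up to standard factors), and the Torres condition $\Delta_L(t,1) \doteq (1 + t + \cdots + t^{\lambda-1})\Delta_{K_1}(t)$ is consistent with, and pins down the sign of, the claimed shift $-\lambda/2$.
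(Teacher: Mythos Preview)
Your proposal is correct and follows essentially the same approach as the paper. The paper does not give a detailed proof but only a one-sentence sketch: ``The proof comes from an analysis of filling relative $\text{spin}^{\text{c}}$ structures; the effect of extending a relative $\text{spin}^{\text{c}}$ structure $\mathfrak s$ on $S^3-\nu L$ to $S^3 - \nu(L-K_j)$ is to shift the Chern class $c_1(\mathfrak s)$ by the Poincar\'{e} dual of the homology class of $K_j$ in $S^3 - \nu(L-K_j)$''; your second step is a fleshed-out version of exactly this, and your first step and Torres-condition consistency check are reasonable additions.
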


That is, forgetting one component of a two-component link has the effect of shifting Alexander gradings of the other component downward by $\frac{\lambda}{2}$.  The proof comes from an analysis of filling relative $\operatorname{spin}^{\operatorname{c}}$ structures; the effect of extending a relative $\operatorname{spin}^{\operatorname{c}}$ structure $\mathfrak s$ on $S^3-\nu L$ to $S^3 - \nu(L-K_j)$ is to shift the Chern class $c_1(\mathfrak s)$ by the Poincar\'{e} dual of the homology class of $K_j$ in $S^3 - \nu(L-K_j)$.  For a two-component link this is a shift by the linking number.

\subsection{Link Floer homology, the multivariable Alexander polynomial, and the Thurston norm}

Recall that the \textit{multivariable Alexander polynomial} of an oriented link $L = K_1 \cup \cdots \cup K_{\ell}$ is a polynomial invariant $\Delta_L(t_1,\ldots,t_{\ell})$ with one variable for each component of the link.  While its relationship to the Alexander polynomials of the component knots is in general slightly complicated, in the case of a two-component $L = K_1 \cup K_2$ Murasugi proved the following using Fox calculus.

\begin{lemma} \cite[Proposition 4.1]{MR0292060} \label{Two Component Link Lemma}
Let $L=K_1\cup K_2$ be an oriented two-component link with $\operatorname{lk}(K_1, K_2) = \lambda$.  If $\Delta_L(t_1,t_2)$ is the multivariable Alexander polynomial of $L$ and $\Delta_{K_1}(t_1)$ is the ordinary Alexander polynomial of $K_1$, then 
\begin{align*}
\Delta_L(t_1, 1) = (1+t+t^2+ \cdots +t^{\lambda-1}) \Delta_{K_1}(t)
\end{align*}

\end{lemma}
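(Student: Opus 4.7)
The plan is to prove Lemma~\ref{Two Component Link Lemma} by Fox calculus applied to a Wirtinger presentation of $\pi_1(S^3 \setminus L)$, in the spirit of Torres's original proof of the specialization conditions for the multivariable Alexander polynomial. First I would fix a regular projection of $L$ with arcs labeled so that $a_1, \ldots, a_m$ belong to $K_1$ and $b_1, \ldots, b_n$ belong to $K_2$. The corresponding Wirtinger meridian generators $x_1, \ldots, x_m, y_1, \ldots, y_n$ of $\pi_1(S^3 \setminus L)$ satisfy one relation per crossing, and abelianization sends $x_i \mapsto t_1$ and $y_j \mapsto t_2$. Fox differentiating and abelianizing yields the Alexander matrix $M(t_1, t_2)$; deleting one redundant row per link component and a corresponding column produces a square matrix whose determinant equals $\Delta_L(t_1, t_2)$ up to a unit in $\mathbb{Z}[t_1^{\pm 1}, t_2^{\pm 1}]$.

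Next I would specialize $t_2 \mapsto 1$ and partition the rows of $M$ by crossing type: $K_1$-self, $K_2$-self, and mixed. After specialization, the $K_2$-self rows reduce to the Alexander matrix of $K_2$ evaluated at $t = 1$, which has one-dimensional cokernel; using these as column operations I would collapse the $n$ columns $y_1, \ldots, y_n$ into a single column $Y$. The $K_1$-self rows, together with the mixed-crossing rows arising from underpasses of $K_1$ under $K_2$ (whose identifications become trivial once $t_2 = 1$), then present the Alexander module of $K_1$ in the $x$-variables. What remains is a block-triangular matrix whose first diagonal block presents the Alexander module of $K_1$, contributing $\Delta_{K_1}(t)$, and whose second diagonal block records the residual coupling of the $x$-variables to the single collapsed column $Y$ through the overpasses of $K_1$ above $K_2$.

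The main step is to show that this coupling block has determinant $1 + t + t^2 + \cdots + t^{\lambda - 1}$. Traversing $K_2$ once and recording at each successive overpass by an arc $a_i$ of $K_1$ the accumulated $x$-exponent $t_1^{e_i}$ determined by the crossing sign, the mixed-crossing rows take the form of a cyclic linear system in $Y$ whose characteristic determinant is $(t_1^{\lambda} - 1)/(t_1 - 1)$, using that the total signed count of overpasses of $K_1$ over $K_2$ equals $\lambda = \ell k(K_1, K_2)$. The main obstacle is the sign bookkeeping at negative crossings and the overall unit ambiguity $\pm t^k$ inherent to $\Delta_L$; I would resolve this by working module-theoretically, identifying $\Delta_L(t, 1)$ with the order of the $\mathbb{Z}[t^{\pm 1}]$-module $H_1$ of the cover of $S^3 \setminus L$ classified by sending the meridian of $K_1$ to the generator and the meridian of $K_2$ to zero, so that the equality of the lemma holds precisely up to this built-in ambiguity.
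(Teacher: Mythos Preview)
The paper does not give its own proof of this lemma; it simply records the statement with a citation to Murasugi and the remark that ``Murasugi proved the following using Fox calculus.'' Your proposal is therefore not competing with any argument in the paper, and your choice of method---Fox calculus on a Wirtinger presentation, in the style of Torres's specialization conditions---is precisely the approach the paper attributes to Murasugi.

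Two small corrections to your sketch. First, in a Wirtinger presentation of a link with $c$ crossings there is exactly one redundant relator, not one per component; you delete a single row, and the $(c-1)\times(c-1)$ minors of the remaining $(c-1)\times c$ matrix generate the first elementary ideal. The multivariable Alexander polynomial for a two-component link is then recovered via the standard identity $D_j=(\phi(g_j)-1)\Delta_L(t_1,t_2)$ (up to units), where $D_j$ is the minor obtained by deleting the column of the generator $g_j$ and $\phi$ is abelianization. This matters for your bookkeeping when you specialize $t_2\to 1$, since deleting a $y$-column introduces a factor $(t_1-1)$ which is exactly what converts $t_1^{\lambda}-1$ into $1+t_1+\cdots+t_1^{\lambda-1}$. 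Second, in your block analysis, the mixed crossings where $K_2$ passes over $K_1$ (not just $K_1$ over $K_2$) also contribute to the coupling once $t_2=1$; tracking both types is what makes the exponent come out to the full linking number $\lambda$ rather than a one-sided count. With those adjustments the argument goes through as you describe, and the unit ambiguity you flag is indeed the built-in $\pm t^k$ indeterminacy of $\Delta_L$.
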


The Euler characteristic of link Floer homology encodes the multivariable Alexander polynomial of the link as follows. Let
\begin{align*}
\chi(\widehat{\mathit{HFL}}(S^3,L)) = \sum_{(A_1,\ldots,A_{\ell})\in \mathbb H} t_1^{A_1({\bf x})}\cdots t_{\ell}^{A_{\ell}({\bf x})}\chi(\widehat{\mathit{HFL}}(S^3, L, (A_1,\ldots, A_{\ell})). 
\end{align*}

\begin{proposition} \cite[Theorem 1.3]{MR2443092} If $L$ is an oriented link, the Euler characteristic of its link Floer homology is equal to
\begin{align*}
\chi(\widehat{\mathit{HFL}}(S^3,L)) = \begin{cases}
	\left(\prod_{i=1}^{\ell} (t_i^{\frac{1}{2}} - t_i^{-\frac{1}{2}})\right) \Delta_L(t_1,\ldots , t_{\ell}) & \ell>1 \\
	\Delta_L(t_1) & \ell=1.
\end{cases}
\end{align*}

\end{proposition}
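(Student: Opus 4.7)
The plan is to compute the Euler characteristic directly from a Heegaard diagram and identify it with a determinantal expression known to compute the multivariable Alexander polynomial. Since $\partial$ preserves the Alexander multigrading and lowers the Maslov grading by one, the graded Euler characteristic of the homology equals that of the chain complex:
\[
\sum_{[\mathbf{x}]\in\widehat{\mathit{HFL}}(S^3,L)} (-1)^{M(\mathbf{x})}\, t_1^{A_1(\mathbf{x})}\cdots t_\ell^{A_\ell(\mathbf{x})} \;=\; \sum_{\mathbf{x}\in\mathbb{T}_{\boldsymbol\alpha}\cap\mathbb{T}_{\boldsymbol\beta}} (-1)^{M(\mathbf{x})}\, t_1^{A_1(\mathbf{x})}\cdots t_\ell^{A_\ell(\mathbf{x})}.
\]

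Next, I would fix a weakly admissible multipointed diagram $\mathcal{D}$ with exactly one pair of basepoints on each component of $L$, so that $n=\ell$ and the generators are in bijection with pairs $(\sigma, (p_1,\ldots,p_{g+\ell-1}))$ where $\sigma \in S_{g+\ell-1}$ and $p_i \in \alpha_i \cap \beta_{\sigma(i)}$. The Maslov grading mod $2$ of such a generator equals $\mathrm{sgn}(\sigma)$ times the product of the local intersection signs $\epsilon(p_i)$, up to an overall shift independent of $\mathbf{x}$. Consequently the Euler characteristic equals, up to an overall unit, $\det(A)$, where $A$ is the $(g+\ell-1)\times(g+\ell-1)$ matrix with
\[
A_{ij} = \sum_{p \in \alpha_i \cap \beta_j} \epsilon(p)\, t_1^{a_1(p)}\cdots t_\ell^{a_\ell(p)},
\]
the monomial exponents being read off from the geometric description of the relative Alexander multigrading via the cycle $\underline{\epsilon}_{\mathbf{w},\mathbf{z}}(\mathbf{x},\mathbf{y})$ in $H_1(S^3-L)$, i.e.\ by recording how often each arc on $\alpha_i$ or $\beta_j$ separating two intersection points crosses the $z$- versus $w$-basepoints on each component.

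The determinant $\det(A)$ is then, up to the prefactor accounting for the cells of the Heegaard splitting corresponding to the meridians of $L$, the Reidemeister--Turaev torsion of the cellular chain complex for the maximal abelian cover of $S^3 \setminus L$ induced by the handle decomposition. A classical theorem of Turaev identifies this torsion with $\Delta_L(t_1,\ldots,t_\ell)/\prod_i(t_i-1)$ (up to units), and after applying the half-integer basepoint normalization dictated by the lattice $\mathbb{H}$, the denominator factors become the claimed prefactor $\prod_i(t_i^{1/2}-t_i^{-1/2})$. For $\ell = 1$, the single-variable Alexander polynomial convention absorbs exactly one copy of $(t_1^{1/2}-t_1^{-1/2})$, which explains the two cases in the formula. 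Consistency with Lemma~\ref{Two Component Link Lemma} under the forgetful map of Lemma~\ref{Gradings Shift Lemma} would furnish a useful sanity check on the normalization.

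The main obstacle is reconciling the two sign conventions in step two: matching $(-1)^{M(\mathbf{x})}$ with $\mathrm{sgn}(\sigma)\cdot\prod_i\epsilon(p_i)$ requires either a handle-slide invariance argument across a spanning set of diagrams or the absolute Maslov grading formulas of \cite{MR2249248}, while identifying the Alexander-graded signed count with Turaev's $\mathrm{spin}^c$-indexed torsion requires careful tracking of how the basepoint pairs $(\mathbf{w},\mathbf{z})$ determine the $\mathrm{spin}^c$ structure on the link exterior and of the affine identification of the multigrading lattice $\mathbb{H}$ with the set of such structures. Once these conventions are aligned, Turaev's theorem delivers the conclusion.
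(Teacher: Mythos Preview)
The paper does not prove this proposition; it simply quotes it as \cite[Theorem 1.3]{MR2443092} and uses it as a black box. There is nothing in the present paper to compare your argument against.

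That said, your outline is in the spirit of the original Ozsv\'ath--Szab\'o proof: one reduces to the chain-level Euler characteristic, interprets the signed count of generators as a Fox-calculus style determinant, and identifies the result with Turaev's torsion of the link complement, which is known to equal $\Delta_L/\prod_i(t_i-1)$ up to units. Your identification of the two main technical points is accurate: matching $(-1)^{M(\mathbf{x})}$ with $\mathrm{sgn}(\sigma)\prod_i\epsilon(p_i)$ up to a global sign, and tracking the $\mathrm{spin}^c$-structure/basepoint normalizations that pin down the half-integer shifts. Neither of these is addressed in the present paper, nor need it be, since the result is imported wholesale. If you intend to actually write out this proof, the cleanest route for the sign issue is the one you mention last---use the absolute $\mathbb{Z}/2$ Maslov grading from \cite{MR2249248} rather than an invariance argument---and for the torsion identification follow Ozsv\'ath--Szab\'o's original computation in the closed case \cite{MR2065507} adapted to the relative $\mathrm{spin}^c$ setting of \cite{MR2443092}.
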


Link Floer homology also categorifies the Thurston seminorm of the link complement.  Let us recall the definition of the Thurston seminorm in this context. Recall that the \textit{complexity} of a compact, oriented, surface with boundary $F$ with components $F_i$ is
\begin{align*}
\chi_{-}(F) = \sum_{F_i: \chi(F_i)\leq 0} \chi(F_i).
\end{align*}

For any homology class in $H_2(S^3,L)$, there is some compact, oriented surface with boundary in $S^3 - \bigcup\nu(K_i)$ which represents $h$. We can consider a function
\begin{align*}
x_L \co H_2(S^3,L) &\rightarrow \mathbb Z \\
h &\mapsto \min_{F\hookrightarrow S^3\bigcup\nu(K_i), [F] = h} \chi_{-}(F).
\end{align*}

Thurston \cite{MR823443} shows this extends to a seminorm on $x_L$ on $H_2(S^3, L; \mathbb R)$, called the Thurston seminorm.

Let us pause to be a little more concrete about what surfaces we will consider in computing the Thurston seminorm of a link. Notice that $H_2(S^3,L)$ is generated by the duals of the meridians $\mu_{K_i}$ of each component $K_i$ of the link. Through an abuse of notation, and to avoid confusion with cohomology classes later, we will also refer to these duals as $[\mu_{K_i}]$. Computing the Thurston seminorm of the element of $H_2(S^3,L)$ which is dual to $\sum a_i[\mu_{K_i}] \in H_1(L)$ is a matter of computing the minimal Euler characteristic of an embedded surface $F$ with no sphere components whose intersection with a meridian $\mu_{K_i}$ of $K_i$ is $a_i$ for each $i$.  In particular, $x([\mu_{K_i}])$ is the minimal Euler characteristic of surface $F$ with boundary one longitude of $K_i$ and an arbitrary number of meridians of the components of $L$. (For practical purposes, one may consider taking a Seifert surface $F$ for $K_i$ and puncturing $F$ wherever it intersects some other component of $L$. However, take note that puncturing a minimal Seifert surface for $K_i$ does not necessarily result in a Thurston-norm minimizing surface.)

Link Floer homology yields a related function.  Recall that $\mathbb H \subset H^2(S^3,L;\linebreak \mathbb R)\cong H_1(L; \mathbb R)$ is the affine lattice of real second cohomology classes $h=\sum A_i [\mu_{K_i}]$ for which $\widehat{\mathit{HFL}}(S^3,L, {\bf A})$ is defined. We have
\begin{align*}
y \co H^1(S^3 - L; \mathbb R) \rightarrow \mathbb R
\end{align*}
\noindent which is defined by 
\[
y(\gamma) = \operatorname{max}_{\{\sum A_i [\mu_{K_i}] \in \mathbb H: \widehat{\mathit{HFL}}(L,{\bf A}) \neq 0\}} \left|\left\langle \sum A_i\widehat{[\mu_{K_i}]},\gamma \right\rangle\right|.
\]

The categorification considers the case of links with no \textit{trivial components}, that is, unknotted components unlinked with the rest of the link.

\begin{proposition} \cite[Thm 1.1]{MR2393424} \label{Statement of Categorification} Let $L$ be an oriented link with no trivial components.  Given $\gamma \in H^1(S^3 - L; \mathbb R)$, the link Floer homology groups determine the Thurston norm of $L$ via the relationship
\begin{align*}
x_L(\operatorname{PD}[\gamma]) + \sum_{i=1}^{\ell} | \langle \gamma, \mu_{K_i} \rangle | = 2y(\gamma).
\end{align*}

Here $\mu_{K_i}$ is the homology class of the meridian for the $i$th component of $L$ in $H_1(S^3 -L; \mathbb R)$, and therefore $| \langle \gamma, \mu_{K_i} \rangle |$ is the absolute value of the Kronecker pairing of $h$ with $\mu_{K_i}$.

\end{proposition}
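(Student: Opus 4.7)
The plan is to establish the identity as two matching inequalities. For the direction $2y(\gamma) \le x_L(\operatorname{PD}[\gamma]) + \sum_i |\langle \gamma, \mu_i\rangle|$, I would begin with a Thurston-norm minimizing surface $F$ representing $\operatorname{PD}[\gamma]$, with boundary on $\partial \nu L$ prescribed by the integers $\langle \gamma,\mu_i\rangle$. From $F$ I would build a multipointed Heegaard diagram $\mathcal D$ for $(S^3,L)$ adapted to $F$, for example by thickening $F$ to a product neighborhood, reading off a handle decomposition of the complement from a sutured hierarchy of $S^3 \setminus \nu L$ in the sense of Gabai, and letting the resulting $\alpha$- and $\beta$-compressing disks intersect $F$ minimally. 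Once this is done, a relative first Chern class computation on the Heegaard surface (analogous to the Seifert genus bound in \cite{MR2065507}) shows that for every intersection point ${\bf x} \in \mathbb T_{\boldsymbol\alpha} \cap \mathbb T_{\boldsymbol\beta}$ one has $2\langle {\bf A}({\bf x}),\gamma\rangle \le -\chi(F) + \sum_i |\langle \gamma,\mu_i\rangle|$. Since $-\chi(F) = x_L(\operatorname{PD}[\gamma])$, this produces the upper bound on $y(\gamma)$.

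For the reverse inequality $2y(\gamma) \ge x_L(\operatorname{PD}[\gamma]) + \sum_i |\langle \gamma,\mu_i\rangle|$, the strategy is to turn a nonvanishing Alexander-graded summand of $\widehat{\mathit{HFL}}$ into a geometric surface. The cleanest way is to pass to Juhász's sutured Floer homology: the link Floer homology $\widehat{\mathit{HFL}}(S^3,L)$ is identified with $\mathit{SFH}$ of the sutured complement $S^3(L)$ with meridional sutures, and the Alexander multigrading corresponds to the splitting of $\mathit{SFH}$ along relative $\operatorname{spin}^{\text{c}}$ structures. One then applies Juhász's surface decomposition theorem: for any oriented decomposing surface $F$ representing $\operatorname{PD}[\gamma]$, the extremal piece of $\mathit{SFH}$ is isomorphic to the $\mathit{SFH}$ of the decomposed sutured manifold. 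Iterating along a sutured hierarchy and using that a taut sutured manifold has nonvanishing $\mathit{SFH}$ at the innermost stage, one deduces that if $\widehat{\mathit{HFL}}$ is nonzero in the appropriate extremal grading, then a corresponding norm-minimizing surface $F$ must exist, giving the required lower bound on $y$.

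The main obstacle is this second inequality: producing a taut surface from the nonvanishing of a Floer group is not a direct construction but requires the full strength of Gabai's sutured hierarchies together with Juhász's decomposition formula, and the hypothesis that $L$ has no trivial components is needed precisely to keep the sutured complement taut so that the induction has a nontrivial base case. The first inequality, while delicate in that one must exhibit an honest Heegaard diagram compatible with $F$, reduces to a bookkeeping computation with Chern classes of relative $\operatorname{spin}^{\text{c}}$ structures once the diagram is in place.
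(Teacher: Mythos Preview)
The paper does not prove this proposition at all: it is quoted verbatim as Theorem~1.1 of \cite{MR2393424} (Ozsv\'ath--Szab\'o, ``Link Floer homology and the Thurston norm'') and used as background input without argument. So there is no proof in the paper to compare your proposal against.

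That said, your outline is a reasonable sketch of how the theorem is established in the literature, though it mixes two different proofs. The original argument in \cite{MR2393424} proves the inequality $2y(\gamma) \le x_L(\operatorname{PD}[\gamma]) + \sum_i |\langle \gamma,\mu_i\rangle|$ via an adjunction inequality for relative $\operatorname{spin}^{\text{c}}$ structures (close in spirit to your first paragraph), and handles the reverse inequality by a direct construction involving Heegaard diagrams adapted to taut foliations coming from Gabai's work, not via Juh\'asz's sutured theory. The approach you describe for the hard direction---identifying $\widehat{\mathit{HFL}}$ with $\mathit{SFH}$ of the meridionally sutured complement and invoking the surface decomposition theorem---is Juh\'asz's later reproof, which is cleaner and more conceptual. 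Either route works, but be aware that you are citing machinery (Juh\'asz's decomposition formula) that postdates the reference the paper actually invokes.
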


We will primarily evaluate this equality on the dual classes to the meridians $\mu_{K_i}$ themselves. In the case that $K$ is a knot, so that $x_K([\mu_K]) = 2g(K) - 1$, Proposition \ref{Statement of Categorification} reduces to the familiar theorem of  \cite[Theorem 1.2]{MR2023281} that the top Alexander grading $i$ for which $\widehat{\mathit{HFK}}(S^3,K,i)$ is nontrivial is the genus of the knot.  In general, observe that if we evaluate on $\mu_{K_i}$, we obtain $x_L([\mu_{K_i}]) + 1 = 2y(\mu_{K_i})$.  In other words, the total breadth of the $A_i$ Alexander grading in the link Floer homology is the Thurston norm of the dual to $K_i$ plus one.

Before leaving the realm of link Floer homology background, we will require one further result concerning the knot Floer homology of fibred knots.

\begin{proposition}\cite[Thm 1.1]{MR2357503}, \cite[Thm 1.4]{MR2450204} \label{Fiberedness Condition Lemma}
Let $K$ be a knot, and $g(K)$ its genus. Then $K$ is fibered if and only if $\widehat{\mathit{HFK}}(S^3, K, g(K)) = \mathbb Z_2)$.
\end{proposition}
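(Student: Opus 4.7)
The plan is to handle the two directions of the equivalence separately, as they are of substantially different depth. The forward direction, that a fibered knot has $\widehat{\mathit{HFK}}(S^3,K,g(K))$ of rank one, is the older and easier statement, due originally to Ozsv\'ath and Szab\'o. For it I would construct a Heegaard diagram adapted to the open book decomposition associated to the fibration: starting from a handle decomposition of the fiber surface $F$ and the monodromy $\phi:F\to F$, one obtains a multipointed Heegaard diagram in which the generators of $\widehat{\mathit{CFK}}(\mathcal D)$ in the top Alexander grading are in bijection with the fixed points of an induced self-map on $\mathrm{Sym}^{g(K)}(F)$, and in fact with a single distinguished intersection point. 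Since the differential preserves Alexander grading and any disk emanating from this top generator would have to pass over a basepoint, the generator cannot cancel, yielding the claim.

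For the converse, which is the substantive direction due to Ghiggini in genus one and Ni in general, I would pass to sutured Floer homology, following Juh\'asz. Let $F$ be a minimal genus Seifert surface for $K$, and form the balanced sutured manifold $(M,\gamma)$ obtained by cutting the knot complement $S^3\setminus\nu(K)$ open along $F$ with a single suture on $\partial\nu(K)$. By Juh\'asz's correspondence theorem, $\widehat{\mathit{HFK}}(S^3,K,g(K))\cong \mathit{SFH}(M,\gamma)$, and since $F$ realizes the Thurston norm of its homology class, $(M,\gamma)$ is taut. The problem then reduces to showing that a taut balanced sutured manifold with $\mathit{SFH}\cong\mathbb F_2$ is necessarily a product sutured manifold $F\times I$, from which it follows at once that the knot complement is a mapping torus of $F$ and $K$ is fibered with fiber $F$.

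To effect this reduction, the strategy is to use Gabai's sutured manifold hierarchy, which decomposes any taut sutured manifold along a sequence of well-groomed decomposing surfaces terminating in a disjoint union of product balls. Juh\'asz's sutured decomposition theorem identifies $\mathit{SFH}$ of the decomposed manifold with a direct summand of $\mathit{SFH}$ of the original along such decompositions, so rank one of the initial $\mathit{SFH}$ forces each stage in the hierarchy also to have $\mathit{SFH}$ of rank one. The proof now proceeds by downward induction on the depth of the hierarchy: if we can show that at each non-terminal stage the sutured manifold must already be a product, we can propagate this structure all the way back to $(M,\gamma)$.

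The main obstacle, and indeed the heart of Ni's argument, is the key step of this induction: showing that if a taut balanced sutured manifold $(N,\nu)$ is not a product, then there exists a decomposing surface which splits $\mathit{SFH}(N,\nu)$ into at least two nonzero summands, contradicting rank one. The dichotomy is to first reduce to the \emph{horizontally prime} case by decomposing along any non-boundary-parallel horizontal surface, and then in the horizontally prime case to locate an essential product annulus or disk whose decomposition still splits $\mathit{SFH}$ nontrivially. This last point requires careful interplay between Juh\'asz's formula and the foliation-theoretic content of Gabai's work, and is the subtle step I would expect to be the main technical hurdle; everything else then fits together into a clean induction.
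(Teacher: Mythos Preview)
The paper does not prove this proposition. It is stated as a background result and immediately attributed: the sentence following the statement reads that the forward direction is due to Ozsv\'ath and Szab\'o, and the converse to Ghiggini (genus one) and Ni (general case). No argument is given; the proposition is simply imported from the literature and later invoked in the proof of Corollary~\ref{Fiberedness Corollary}. So there is nothing in the paper to compare your proposal against.

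As a sketch of the literature, your outline is broadly on target but conflates chronologically distinct approaches. The sutured Floer homology route you describe is Juh\'asz's later reformulation and simplification, not Ni's original proof, which worked directly with Heegaard Floer homology and Gabai's sutured hierarchy without the $\mathit{SFH}$ decomposition formula. Your description of the forward direction is also somewhat impressionistic: Ozsv\'ath and Szab\'o's original argument proceeds via the relationship between knot Floer homology and the Heegaard Floer contact invariant in the zero-surgery, rather than by directly counting generators in a diagram adapted to the open book, though later expositions do take the route you indicate. None of this affects correctness at the level of a plan, but if you were writing this up you would want to be precise about which argument you are actually reproducing.
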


The forward direction (that if $K$ is fibred, then the knot Floer homology in the top nontrivial Alexander grading is $\mathbb Z_2$) is due to Ozsv{\'a}th and Szab{\'o} \cite[Theorem 1.1]{MR2153455}, whereas the other direction was proved by Ghiggini \cite{MR2450204} in the case $g=1$ and Ni \cite{MR2357503} in the general case.

We are now ready to consider the specific case of periodic knots.

\section{Proofs of Murasugi's and Edmonds' conditions} \label{Periodic Knots Section}

Let $\widetilde{K}\subset S^3$ be an oriented $q$-periodic knot and $K$ its quotient knot. We will begin by constructing a Heegaard diagram for $(S^3, \widetilde{K} \cup \widetilde{U})$ which is preserved by the action of $\mathbb Z_q$ on $(S^3, \widetilde{K} \cup \widetilde{U})$ and whose quotient under this action is a Heegaard diagram for $(S^3,K\cup U)$.

\subsection{Heegaard diagrams for periodic knots}

As in \cite{Hendricks}, it will be necessary to work with Heegaaard diagrams for $(S^3, K \cup U)$ on the sphere $S^2$. Regard $S^3$ as $\mathbb R^2 \cup \{\infty\}$ and arrange $\widetilde{K}$ such that the unknotted axis of periodicity $\widetilde{U}$ is the $z$-axis together with the point at infinity, and the action is given by rotation by $2\pi/q$.  Then the projection of $\widetilde{K}$ to the $xy$-plane together with the point at infinity is a periodic diagram $\widetilde{E}$ for $\widetilde{K}$.  Taking the quotient of $(S^3, \widetilde{K})$ by the action of $\mathbb Z_q$ and similarly projecting to the $xy$-plane together with the point at infinity produces a quotient diagram $E$ for $K$.

Construct a Heegaard diagram for $K  \cup  U$ as follows: Begin with the diagram $E$ on $S^2 = \mathbb R^2 \cup \{\infty\}$.  Place a basepoint $w_0$ at $\infty$ and $z_0$ at $0$; these will be the sole basepoints on $U$. (This is a slight departure from the notation of Section \ref{Heegaard Floer Background Section}; it will be more convenient to have the indexing start at $w_0$ rather than $w_1$ for the diagrams we construct.) Arrange basepoints $z_1,w_1,\ldots ,z_{n_1},w_{n_1}$ on $K$ such that traversing $K$ in the chosen orientation, one passes through the basepoints in that order.  Moreover, we insist that while travelling from $z_i$ to $w_i$ one passes only through undercrossings and travelling from $w_i$ to $z_{i+1}$ or from $w_n$ to $z_1$ one passes only through overcrossings.  In other words, we choose basepoints so as to make $E$ into a bridge diagram for $K$.  Notice that $n_1$ is at most the number of crossings on the diagram $E$, or half the number of crossings on $\widetilde{E}$.  Encircle the portion of the knot running from $z_i$ to $w_i$ with a curve $\alpha_i$, oriented counterclockwise in the complement of $w_0$. Similarly, encircle the portion of the knot running from $w_{i}$ to $z_{i+1}$ (or from $w_{n_1}$ to $z_1$) with a curve $\beta_i$, oriented counterclockwise in the complement of $w_0$. Notice that both $\alpha_i$ and $\beta_i$ run counterclockwise around $w_i$, and moreover for each $i$, $S^2 \backslash \{\alpha_i, \beta_i\}$ has four components: one each containing $z_{i}, w_i$, and $z_{i+1}$, and one containing all other basepoints. This yields a Heegaard diagram $\mathcal D = (S^2, \boldsymbol{\alpha}, \boldsymbol{\beta}, {\bf w}, {\bf z})$ for $(S^3, K \cup U)$.

We may now take the branched double cover of $\mathcal D$ over $z_0$ and $w_0$ to produce a Heegaard diagram $\widetilde{\mathcal D}$ for $(S^3, \widetilde{K} \cup \widetilde{U})$ compatible with $\widetilde{E}$.  This diagram has basepoints $w_0$ and $z_0$ for $\widetilde{U}$ and basepoints $z_1^1, w_1^1,\ldots,z_{n_1}^1,w_{n_1}^1,\linebreak  z_1^2,\ldots,w_{n_1}^2,\ldots,z_1^q,\ldots,w_{n_1}^q$ arranged in that order along the oriented knot $\widetilde{K}$. Each adjacent pair $z_i^a$ and $w_i^a$ is encircled by $\alpha_i^a$ a lift of $\alpha_i$, and each adjacent pair $w_{i}^a$ and $z_{i+1}^a$ is encircled by $\beta_i^a$ a lift of $\beta_i$.  (Pairs $w_{n_1}^a$ and $z_1^{a+1}$, as well as $w_{n_1}^q$ and $z_1^1$, are encircled by lifts $\beta_{n_1}^a$ of $\beta_{n_1}$.)  This yields a diagram $\widetilde{\mathcal D} = (S^2, \widetilde{\boldsymbol{\alpha}}, \widetilde{\boldsymbol{ \beta}}, \widetilde{\bf w}, \widetilde{\bf z})$ with $qn_1$ each of $\alpha$ and $\beta$ curves and $qn_1 +1$ pairs of basepoints.

\begin{remark}
The notation above is not quite the notation of \cite{Hendricks}, in which the two lifts of a curve $\alpha_i$ in a Heegaard surface $\mathcal D$ to its double branched cover $\widetilde{\mathcal D}$ were $\widetilde{\alpha_i}$ and $\tau(\widetilde{\alpha_i})$.  In the new slightly more streamlined notation, adopted in view of the need to work with $q$-fold branched covers and to consider multiple lifts of some of the basepoints, these two curves would be $\alpha_i^1$ and $\alpha_i^2$.
\end{remark}

Let us pause to introduce some notation on the diagram $\mathcal D$ that will be useful in Section \ref{Stable Normal Triv Section}.  Let $x_i$ be the single positive intersection point between $\alpha_i$ and $\beta_i$ and $y_i$ the negative intersection point.  Moreover, let $F_i$ be the closure of the component of $S - \alpha_i - \beta_i$ containing $z_{i}$ and $E_i$ be the closure of the component of $S - \alpha_i -\beta_i$ containing $z_{i+1}$ (or $z_1$ if $i=n_1$).  Then $P_i = E_i - F_i$ is a periodic domain of index zero on $\mathcal D$.  Finally, let $\gamma_i$ be the union of the arc of $\alpha_i$ running from $x_i$ to $y_i$ and the arc of $\beta_i$ running from $x_i$ to $y_i$.  In particular, this specifies that $\gamma_i$ has no intersection with any $\alpha$ or $\beta$ curves other than $\alpha_i$ and $\beta_i$, and moreover the component of $S - \gamma_i$ which does not contain $w_0$ contains only a single basepoint $w_i$.

\begin{figure}
\centering
\includegraphics[scale=.8]{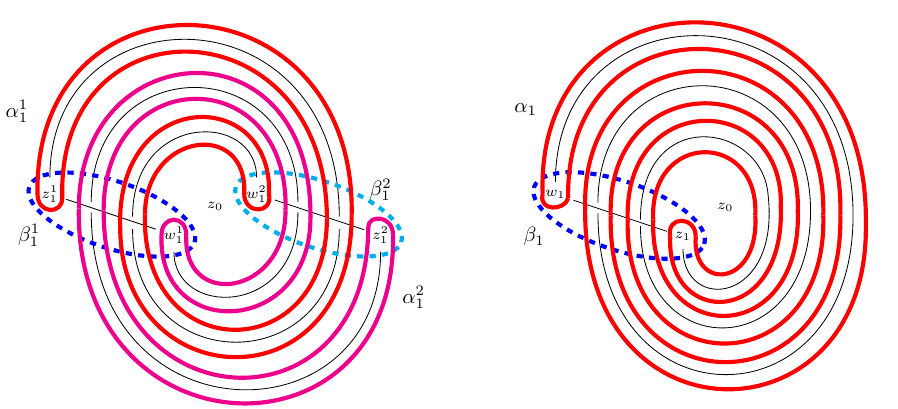}
\caption{An equivariant Heegaard diagram $\widetilde{\mathcal D}$ for the trefoil together with the unknotted axis, and its quotient Heegaard diagram $\mathcal D$ for the Hopf link.}
\label{Trefoil Heegaard Diagrams Figure}
\end{figure}

Our next goal will be to investigate the behavior of the relative Maslov and (particularly) Alexander gradings of generators of $\widehat{\mathit{CFK}}\!(\mathcal D)$ and $\widehat{\mathit{CFK}}\!(\widetilde{\mathcal D})$.  We begin with two relatively simple lemmas. As before, let $\tau$ be the involution on $(S^3, \widetilde{K})$ (and on $\widetilde{\mathcal D}$).  Let $\tau_{\#}$ be the induced involution on $\widehat{\mathit{CFK}}\!(\widetilde{\mathcal D})$.

\begin{lemma}
The induced map $\tau_{\#}$ preserves Alexander and Maslov gradings.
\end{lemma}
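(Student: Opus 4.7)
The plan is to exploit the fact that $\tau$ acts as a diffeomorphism of the Heegaard sphere which permutes the $\alpha$ curves, $\beta$ curves, and basepoints of $\widetilde{\mathcal D}$ among themselves: by the construction of the branched double cover, $\tau$ swaps $\alpha_i^1 \leftrightarrow \alpha_i^2$, $\beta_i^1 \leftrightarrow \beta_i^2$, $w_i^1 \leftrightarrow w_i^2$ and $z_i^1 \leftrightarrow z_i^2$ for each $i \geq 1$, and fixes the basepoints $w_0$ and $z_0$ on the axis $U$. Consequently $\text{Sym}^{2n_1}(\tau)$ preserves both $\mathbb T_{\widetilde{\boldsymbol \alpha}}$ and $\mathbb T_{\widetilde{\boldsymbol \beta}}$, so any Whitney disk $\phi \in \pi_2({\bf x}, {\bf y})$ is sent to a Whitney disk $\tau_*\phi \in \pi_2(\tau {\bf x}, \tau {\bf y})$.

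I would then combine two observations. First, because Lipshitz's formula $\mu(\phi) = \sum a_i e(D_i) + p_{\bf x}(D) + p_{\bf y}(D)$ depends only on intrinsic quantities of the domain on the Heegaard surface, and $\tau$ preserves Euler measures and corner multiplicities, one has $\mu(\tau_*\phi) = \mu(\phi)$. Second, at any basepoint $p$, $n_p(\tau_*\phi) = n_{\tau^{-1}(p)}(\phi)$; since $\tau$ permutes the $w$ basepoints among themselves and likewise the $z$ basepoints, and moreover this permutation preserves the partition of basepoints by link component of $\widetilde{K} \cup U$, the total multiplicities $\sum_i n_{w_i}$ and the component-wise sums $\sum_{i \in K_j} n_{z_i}$ and $\sum_{i \in K_j} n_{w_i}$ are unchanged upon replacing $\phi$ with $\tau_*\phi$. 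Substituting into the relative grading formulas from the previous section then yields
\[
M({\bf x}) - M(\tau {\bf x}) = M({\bf y}) - M(\tau {\bf y}), \qquad A_j({\bf x}) - A_j(\tau {\bf x}) = A_j({\bf y}) - A_j(\tau {\bf y})
\]
for every pair of generators ${\bf x}, {\bf y}$ and each component $K_j$, so that $\tau^\#$ respects both relative gradings.

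To pin the constants to zero, I would exhibit a generator setwise fixed by $\tau$. Starting with any generator ${\bf x}_0 = (x_1, \ldots, x_{n_1})$ of $\mathcal D$ with $x_i \in \alpha_i \cap \beta_i$, the full preimage tuple $(x_1^1, x_1^2, x_2^1, x_2^2, \ldots, x_{n_1}^1, x_{n_1}^2)$ is a generator of $\widetilde{\mathcal D}$ which is fixed by $\tau$. This forces the common value $M({\bf x}) - M(\tau {\bf x})$ and each $A_j({\bf x}) - A_j(\tau {\bf x})$ to vanish. The only mildly delicate point is the diffeomorphism invariance of the Maslov index, but this is immediate from Lipshitz's combinatorial formula, since Euler measures and corner multiplicities are manifestly preserved by any diffeomorphism of the Heegaard surface.
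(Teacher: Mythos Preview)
Your proposal is correct and follows essentially the same approach as the paper: both arguments use that $\tau$ carries Whitney disks to Whitney disks with the same Maslov index and permuted basepoint multiplicities, and both pin the grading shift to zero by exhibiting the $\tau$-invariant generator $\widetilde{\bf x} = \pi^{-1}({\bf x})$ lifted from $\mathcal D$. The only organizational difference is that the paper compares ${\bf s}$ and $\tau^{\#}({\bf s})$ directly to the invariant generator $\widetilde{\bf x}$ via the disks $\phi$ and $\tau\circ\phi$, rather than first deducing that the shift is constant for all generators and then specializing; your explicit appeal to Lipshitz's formula for $\mu(\tau_*\phi)=\mu(\phi)$ is in fact a bit more careful than the paper's version, which leaves that step implicit.
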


\begin{proof}
Let ${\bf s} \in \mathbb T_{\widetilde{\boldsymbol \alpha}} \cap \mathbb T_{\widetilde{\boldsymbol \beta}}$.  Choose a generator ${\bf x} \in \widehat{\mathit{CFK}}(\mathcal D)$, and let $\widetilde{\bf x} =\linebreak \pi^{-1}({\bf x})$, such that $\widetilde{\bf x}$ is a generator in $\widehat{\mathit{CFK}}(\widetilde{\mathcal D})$ which is invariant under $\tau_{\#}$.  Choose a Whitney disk $\phi$ in $\pi_2({\bf s},\widetilde{\bf x})$ and let  $D$ be its shadow on $\mathcal D$.  Then $\tau \circ \phi$ is a Whitney disk in $\pi_2(\tau({\bf s}),\widetilde{\bf x})$ with shadow $\tau(D)$. Furthermore, since $w_0$ and $z_0$ are fixed by the involution, $n_{z_0}(D)=n_{z_0}(\tau(D))$ and $n_{w_0}(D) = n_{w_0}(\tau(D))$, whereas since the remaining basepoints are interchanged by the involution, we have $\sum_{i=0}^{n_1} n_{z_i}(D) = \sum_{i=0}^{n_1} n_{z_i}(\tau(D))$ and $\sum_{i=0}^{n_1} n_{w_i}(D)\linebreak = \sum_{i=0}^{n_1} n_{w_i}(\tau(D))$.  These equalities imply that $M(\widetilde{\bf x}) - M({\bf s}) = M(\widetilde{\bf x}) -\linebreak M(\tau_{\#}({\bf s}))$, and similarly for $A_1$ and $A_2$.  Therefore ${\bf s}$ and $\tau_{\#}({\bf s})$ are in identical gradings.
\end{proof}

\begin{lemma} \label{Maslov Index Lemma}
Let $\phi \in \pi_2({\bf x},{\bf y})$ be a Whitney disk between generators ${\bf x}$ and ${\bf y}$ in $\widehat{\mathit{CFK}}(\mathcal D)$ with Maslov index $\mu(\phi)$, with shadow the domain $D$ on $\mathcal D$. There is a Whitney disk $\widetilde{\phi} \in \pi_2(\widetilde{{\bf x}},\widetilde{{\bf y}})$ with shadow the domain $\pi^{-1}(D)$ on $\widetilde{\mathcal D}$, and $\mu(\widetilde{\phi}) = q \mu(\phi) - (q-1)(n_{z_0}(\phi) + n_{w_0}(\phi))$.

\end{lemma}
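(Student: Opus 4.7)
The plan is to apply Lipshitz's combinatorial formula
\[
\mu(\phi) \;=\; \sum a_i e(D_i) + p_{{\bf x}}(D) + p_{{\bf y}}(D)
\]
(equation (\ref{Maslov index formula})) to both $\phi$ on $\mathcal D$ and $\widetilde{\phi}$ on $\widetilde{\mathcal D}$, and compare the two expressions term by term using the branched covering $\pi \colon \widetilde{\mathcal D} \to \mathcal D$, which is $q$-fold with branch locus $\{w_0,z_0\}$.

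First I would construct the lifted disk. The pullback $\widetilde{D} = \pi^{-1}(D) = \sum_i a_i \pi^{-1}(D_i)$ is a 2-chain on $\widetilde{\mathcal D}$ whose boundary consists of $\widetilde{\boldsymbol\alpha}$- and $\widetilde{\boldsymbol\beta}$-arcs running from $\widetilde{\bf x} = \pi^{-1}({\bf x})$ to $\widetilde{\bf y} = \pi^{-1}({\bf y})$, matching the boundary conditions for a Whitney disk in $\pi_2(\widetilde{\bf x},\widetilde{\bf y})$; since $\widetilde{\mathcal D}$ has genus zero, this domain determines a homotopy class of disks $\widetilde{\phi}$ with shadow $\widetilde{D}$.

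Next I would compute the two summands of the Lipshitz formula for $\widetilde{\phi}$. For the point-multiplicity term, observe that $\widetilde{\bf x}$ consists of exactly $q$ preimages of each coordinate of ${\bf x}$, all lying away from the branch set; because $\pi$ is a local diffeomorphism at every point of $\widetilde{\bf x}$, the four local multiplicities of $\widetilde{D}$ at any preimage point coincide with those of $D$ at its image, so $p_{\widetilde{\bf x}}(\widetilde{D}) = q\, p_{{\bf x}}(D)$, and likewise for ${\bf y}$. For the Euler-measure term, $e$ is additive over components and multiplies by $q$ under unramified $q$-fold covers; at a region $D_i$ containing a branch point, Riemann--Hurwitz gives $\chi(\pi^{-1}(D_i)) = q\chi(D_i) - (q-1)$ (a single branch point) or $q\chi(D_i) - 2(q-1)$ (if $D_i$ happens to contain both $w_0$ and $z_0$, in which case necessarily $n_{w_0}(\phi) = n_{z_0}(\phi)$), while the corner count at each vertex is simply multiplied by $q$. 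Since the multiplicity $a_i$ at a region containing $w_0$ (resp.\ $z_0$) is by definition $n_{w_0}(\phi)$ (resp.\ $n_{z_0}(\phi)$), summing yields
\[
\sum_j \tilde{a}_j\, e(\widetilde{D}_j) \;=\; q\sum_i a_i\, e(D_i) \;-\; (q-1)\bigl(n_{w_0}(\phi)+n_{z_0}(\phi)\bigr).
\]

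Adding the two pieces and invoking (\ref{Maslov index formula}) on both sides gives $\mu(\widetilde{\phi}) = q\mu(\phi) - (q-1)(n_{w_0}(\phi)+n_{z_0}(\phi))$, as desired. The only real delicacy is the Euler-measure bookkeeping at the branch points: one must verify that the Riemann--Hurwitz correction applies uniformly regardless of whether $w_0$ and $z_0$ fall in the same or different components of $S - \boldsymbol\alpha - \boldsymbol\beta$, and keep careful track of corner counts so that the corner terms transform by the naive factor of $q$ (which they do, precisely because no element of $\widetilde{\bf x} \cup \widetilde{\bf y}$ is a branch point). Everything else is formal.
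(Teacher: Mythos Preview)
Your proposal is correct and follows essentially the same approach as the paper: both apply Lipshitz's formula on $\mathcal D$ and $\widetilde{\mathcal D}$, observe that the point-multiplicity terms scale by $q$, and compute that the Euler measure of a region containing a branch point picks up a correction of $-(q-1)$ (you phrase this via Riemann--Hurwitz, the paper via direct corner counting, but these are the same computation). Your extra care about whether $w_0$ and $z_0$ lie in the same region is harmless but unnecessary here, since the construction of $\mathcal D$ guarantees they lie in distinct components of $S^2 - \boldsymbol\alpha - \boldsymbol\beta$.
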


\begin{proof}
The boundary of the lift $\pi^{-1}(D)$ is trivial as a one cycle in $S^3$, implying that $\pi^{-1}(D)$ is the shadow of a Whitney disk $\widetilde{\phi} \in \pi_2(\widetilde{{\bf x}},\widetilde{{\bf y}})$.  We will compare the Maslov index of $\phi$ with the Maslov index of $\widetilde{\phi}$ using the formula~\ref{Maslov index formula}.  As in that formula, we will write $D$ as a sum of the closures of the components of $S^2 - {\boldsymbol \alpha} - {\boldsymbol \beta}$. Say there are $m$ such components, and label them as follows.  There are two domains in $S^2 - {\boldsymbol \alpha} -{\boldsymbol \beta}$ which contain a branch point.  Let these be $D_1$ containing $z_0$ and $D_2$ containing $w_0$.  Let the shadow of $\phi$ be $a_1D_1 + a_2D_2 +\sum_{i=3}^{m} a_iD_i$. Then the Maslov index of $\phi$ is
\[
\mu(\phi) = \sum_i a_i e(D_i) + p_{\bf x}(D) + p_{\bf y}(D)
\]
Let us now consider applying the same formula to $\pi^{-1}(D) = \sum_i a_i \pi^{-1}(D_i)$. For $i\geq 3$, the lift $\pi^{-1}(D_j)$ of $D_j$ consists of $q$ copies of $D_i$, and by additivity of the Euler measure we see that $e(\widetilde{D}_i) = qe(D_i)$.  For $i=1,2$, let $2k_i$ be the number of corners of $D_i$.  Then $\pi^{-1}(D_i) = \widetilde{D}_i$ is a single component of $S^2 - \widetilde{\boldsymbol \alpha} - \widetilde{\boldsymbol \beta}$ with $2qk_i$ corners and Euler measure $e(\widetilde{D}_i) = 1 - \frac{qk_i}{2} = q(e(D_i)) - (q-1)$.  Notice, furthermore, that $p_{\widetilde{\bf x}}(\widetilde{D}) = q p_{\bf x}(D)$ and $p_{\widetilde{\bf y}}(\widetilde{D}) = q p_{\bf y}(D)$. Therefore we compute
\begin{align*}
\mu(\widetilde{\phi}) &= \sum_i a_i e(\pi^{-1}(D_i)) + p_{\widetilde{\bf x}}(\widetilde{D}) + p_{\widetilde{\bf y}}(\widetilde{D}) \\
						&= a_1 e(\widetilde{D}_1) + a_2 e(\widetilde{D}_2) + q \sum_{i=3}^m e(D_i) + qp_{\bf x}(D) + q p_{\bf y}(D)\\
						&=a_1 (qe(D_1) - (q-1)) + a_2(qe(D_2) - (q-1)) \\
                        &\quad + q \sum_{i=3}^m e(D_i) + qp_{\bf x}(D) + q p_{\bf y}(D)\\
						&=q\mu(\phi) - (q-1)(a_1 + a_2).
\end{align*}

Since $a_1+a_2$ is exactly $n_{z_0}(D) + n_{w_0}(D)$ the total algebraic intersection of $\phi$ with the branch points, this proves the result. \end{proof}

We can now construct the relationship between the Alexander gradings of the generators of $\widehat{\mathit{CFL}}(\widetilde{\mathcal D})$ and $\widehat{\mathit{CFL}}(\mathcal D)$.  For the case of a $q$-periodic knot, we will look only at the relative gradings; later in the particular case of a doubly-periodic knot we will fix the absolute gradings using symmetries of link Floer homology.  Let $\pi \co \widetilde{\mathcal D} \rightarrow \mathcal D$ be the restriction of the branched covering map $(S^3, \widetilde{K} \cup \widetilde{U}) \rightarrow (S^3, K\cup U)$.  We have the following, which is analogous to \cite[Lemma 3.1]{MR2443111}.

\begin{lemma} \label{Generator Matching Lemma}

Let ${\bf s} \in \widehat{\mathit{CFL}}(\widetilde{\mathcal D})$, thought of as a $qn_1$-tuple of points on $S^2$ with one point on each $\alpha_i^{k}$ and one on each $\beta_i^{k}$. Consider its projection $\pi({\bf s})$ to $qn_1$ points on $\mathcal D$. There is a (not at all canonical) way to write $\pi({\bf s})$ as ${\bf s_1} \cup {\bf s_2} \cup \cdots \cup {\bf s_q}$ a union of generators in $\widehat{\mathit{CFL}}(\mathcal D)$.

\end{lemma}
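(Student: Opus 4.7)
The plan is to recast the problem as a combinatorial decomposition of a bipartite multigraph and invoke König's edge coloring theorem. Recall that a generator of $\widehat{\mathit{CFL}}(\mathcal D)$ is an $n_1$-tuple of points with exactly one on each $\alpha_i$ and exactly one on each $\beta_j$. Since ${\bf s}$ contains one point on each of the $qn_1$ curves $\alpha_i^k$ and one on each of the $qn_1$ curves $\beta_j^k$, after applying $\pi$ we obtain $qn_1$ points in $\mathcal D$ with the property that exactly $q$ of them lie on each $\alpha_i$ and exactly $q$ of them lie on each $\beta_j$.

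First I would form a bipartite multigraph $G$ with vertex set $\{\alpha_1,\ldots,\alpha_{n_1}\} \sqcup \{\beta_1,\ldots,\beta_{n_1}\}$ by introducing one edge $e_p$, joining $\alpha_i$ to $\beta_j$, for each point $p \in \pi({\bf s})$ with $p \in \alpha_i \cap \beta_j$; the edges are labeled by the originating points in ${\bf s}$ so that distinct preimages which happen to project to the same intersection correspond to distinct parallel edges. The observation of the previous paragraph then says precisely that $G$ is $q$-regular.

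The heart of the argument is König's edge coloring theorem: any bipartite multigraph has chromatic index equal to its maximum degree. Hence $G$ admits a proper $q$-edge coloring, and the edge set of $G$ decomposes as a disjoint union of $q$ perfect matchings $M_1,\dots,M_q$. Each $M_a$ consists of exactly $n_1$ labeled edges meeting every vertex of $G$ precisely once, and thus corresponds to a set ${\bf s}_a$ of $n_1$ points of $\mathcal D$ with one point on each $\alpha_i$ and one on each $\beta_j$ --- exactly the data of a generator of $\widehat{\mathit{CFL}}(\mathcal D)$. Taking $\pi({\bf s}) = {\bf s}_1 \cup \cdots \cup {\bf s}_q$ gives the desired decomposition.

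The only real subtlety --- and the source of the ``not at all canonical'' caveat in the statement --- is bookkeeping for coincidences: different points of ${\bf s}$ may project to the same intersection point of some $\alpha_i \cap \beta_j$, and the edge coloring produced by König's theorem is of course far from unique. Tracking edges by their preimages in $\widetilde{\mathcal D}$ rather than by their images in $\mathcal D$ preserves the multiset structure and makes König's theorem apply directly. No other step requires any input beyond elementary bipartite graph theory, so I expect no genuine obstacles once the multigraph is set up.
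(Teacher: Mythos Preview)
Your proposal is correct. Both your argument and the paper's reduce to the same combinatorial fact about $q$-regular bipartite multigraphs, but they invoke different classical theorems to finish. The paper applies Hall's marriage theorem: it checks Hall's condition, extracts a single system of distinct representatives (one generator ${\bf s}_1$), deletes it, and repeats $q$ times. You instead apply K\"onig's edge-coloring theorem once to obtain all $q$ perfect matchings simultaneously. Since K\"onig's theorem is typically proved by exactly the iterated Hall argument the paper writes out, the two approaches are logically equivalent; yours is slightly more economical in that the iteration is packaged inside the cited theorem, while the paper's version makes the inductive step explicit. Neither approach yields any canonicity, as both Hall's SDR and K\"onig's coloring are highly non-unique.
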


The proof of this lemma (pointed out by Adam Levine) is an application of the following combinatorial result of Hall \cite{Hall}.  Let $A$ be a set, and $\{ A_i\}_{i=1}^m$ be a collection of finite subsets (a version also exists for infinitely many $A_i$). A \textit{system of distinct representatives} is a choice of elements $j_i \in A_i$ for each $i$ such that $j_{i_1} \neq j_{i_2}$ if $i_1 = i_2$.  Hall's theorem gives conditions under which a system of distinct representatives exists.

\begin{theorem} \cite[Theorem 1]{Hall} Let $\{A_i\}_{i=1}^m$ be finitely many subsets of a set $A$.  Then a system of distinct representatives exists if and only if, for any $1\leq s\leq m$ and $1<i_1<\cdots<i_s<m$,  $A_{i_1} \cup\cdots \cup A_{i_s}$ contains at least $s$ elements.
\end{theorem}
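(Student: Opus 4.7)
The plan is to prove Hall's theorem by induction on $m$, the number of sets. The forward direction (necessity) is essentially formal: if $\{j_i\}$ is a system of distinct representatives, then for any indices $i_1 < \cdots < i_s$ the elements $j_{i_1}, \ldots, j_{i_s}$ are $s$ distinct members of $A_{i_1} \cup \cdots \cup A_{i_s}$, so this union has at least $s$ elements. I would dispatch this in a single sentence.

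For sufficiency, the base case $m=1$ is immediate: Hall's condition with $s=1$ says $A_1$ is nonempty, so we pick any element as $j_1$. For the inductive step, assuming the result for all families of fewer than $m$ sets, I split into two cases according to how tight Hall's condition is. In the \emph{slack case}, where for every $s < m$ and every choice of $s$ indices one has $|A_{i_1} \cup \cdots \cup A_{i_s}| \geq s+1$, choose any element $j_m \in A_m$ and delete it: set $A_i' = A_i \setminus \{j_m\}$ for $i < m$. Then any $s$-fold union of the $A_i'$ has size at least $|A_{i_1} \cup \cdots \cup A_{i_s}| - 1 \geq s$, so Hall's condition still holds for $\{A_i'\}_{i<m}$, and the inductive hypothesis supplies distinct representatives $j_1, \ldots, j_{m-1}$ different from $j_m$.

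In the \emph{tight case}, there exist indices $i_1 < \cdots < i_s$ with $s < m$ such that $|A_{i_1} \cup \cdots \cup A_{i_s}| = s$. Call this union $B$. Apply the inductive hypothesis to $\{A_{i_1}, \ldots, A_{i_s}\}$ (Hall's condition is inherited) to obtain representatives $j_{i_1}, \ldots, j_{i_s}$ that exhaust $B$. For the remaining $m-s$ indices $k \notin \{i_1, \ldots, i_s\}$, define $A_k'' = A_k \setminus B$. I then need to verify Hall's condition for the family $\{A_k''\}$: given indices $k_1, \ldots, k_r$ outside $\{i_1,\ldots,i_s\}$, the union $A_{k_1}'' \cup \cdots \cup A_{k_r}''$ together with $B$ equals $A_{k_1} \cup \cdots \cup A_{k_r} \cup A_{i_1} \cup \cdots \cup A_{i_s}$, which has at least $r+s$ elements by Hall's condition on the original family; subtracting the $s$ elements of $B$ gives at least $r$ elements in the modified union. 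The inductive hypothesis then yields distinct representatives for the $A_k''$, necessarily disjoint from $B$ and hence from the $j_{i_\ell}$.

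The main obstacle is the tight case, specifically ensuring that after reducing to the remaining $m-s$ sets one has not destroyed Hall's condition; the count above is the crux and the only genuine content beyond bookkeeping. Once both cases close, the induction terminates and the system of distinct representatives exists, completing the proof.
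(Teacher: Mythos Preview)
Your proof is the standard inductive argument for Hall's marriage theorem and is correct. Note, however, that the paper does not supply its own proof of this statement: it is quoted as a classical result with a citation to Hall's original paper and is then applied as a black box in the proof of Lemma~\ref{Generator Matching Lemma}. So there is no proof in the paper to compare against; your argument stands on its own as a complete and correct justification of the cited theorem.
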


Using this, we may prove Lemma \ref{Generator Matching Lemma}

\begin{proof}[Proof of Lemma \ref{Generator Matching Lemma}]  Let $A = \{1,\ldots,q\}$. For $1 \leq i \leq n_1$, let $A_i$ be a set of integers $j$, with $1\leq j \leq n_1$, such that each $j$ appears once in $A_i$ for every point of $\pi({\bf s}) \cap (\alpha_i \cap \beta_j)$.  That is, the sets $A_i$ record how many intersection points on $\alpha_i$ also lie on $\beta_j$.  Notice that there are $q$ elements in each $A_i$, and each $j$ appears exactly $q$ times in $\coprod_i A_i$.  We claim the sets $A_i$ satisfy the condition of Hall's theorem.  For given $1 \leq i_1 \leq \cdots \leq i_m \leq n_1$, the disjoint union $\coprod_{k=1}^{m} A_{i_k}$ contains $qm$ elements, and therefore must contain at least $m$ different integers $j$.  Therefore $\bigcup_{k=1}^{m} A_{i_k}$ contains at least $m$ elements.  Hence we can choose a set of distinct representatives $j_i$ in $A_i$.  There is a generator ${\bf s_1}$ consisting of points in $\pi({\bf s})$ on $\alpha_i \cap \beta_{j_i}$.  Remove these points from $\pi({\bf s})$ (and the individual $j_i$ from the sets $A_i$, producing new sets $A_i'$) and repeat the argument, now with $q-1$ elements in each $A_i'$ and $q-1$ appearances of each symbol $j$ in $\coprod_i A_i'$.  After $q$ repetitions, we have broken ${\bf s}$ into ${\bf s_1}\cup\cdots\cup {\bf s_q}$, where each ${\bf s_i}$ is a generator for  $\widehat{\mathit{CFL}}(\mathcal D)$.  This choice of partition is not at all unique.
\end{proof}

We can start by determining the relative Alexander gradings of generators of $\widehat{\mathit{CFL}}(\widetilde{\mathcal D})$ which are invariant under the action of $\mathbb Z_q$ on $\widetilde{\mathcal D}$; that is, exactly those generators which are total lifts of generators in $\widehat{\mathit{CFL}}(\mathcal D)$ under the projection map $\pi$.

\begin{lemma}

Let ${\bf x}, {\bf y} \in \mathbb T_{\boldsymbol \alpha} \cap \mathbb T_{\boldsymbol \beta}$ and $\widetilde{\bf x}, \widetilde{\bf y}$ be their total lifts in $\mathbb T_{\widetilde{\boldsymbol \alpha}} \cap \mathbb T_{\widetilde{\boldsymbol \beta}}$.  Then
\begin{align*}
A_1(\widetilde{\bf y}) - A_1(\widetilde{\bf x}) &= q\left(A_1({\bf y}) - A_1({\bf x})\right) \\
A_2(\widetilde{\bf y}) - A_2(\widetilde{\bf x}) &= A_2({\bf y}) - A_2({\bf x}).
\end{align*}
\end{lemma}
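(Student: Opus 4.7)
The plan is to reduce the statement to a direct calculation using the formula for the relative Alexander multigrading in terms of basepoint multiplicities of a Whitney disk, combined with the lifting statement used in the proof of Lemma \ref{Maslov Index Lemma}.

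First, I would choose a Whitney disk $\phi \in \pi_2({\bf x}, {\bf y})$ with shadow $D = \sum a_i D_i$ on $\mathcal{D}$, where $D_1$ is the region containing $z_0$ and $D_2$ is the region containing $w_0$. By the argument of Lemma \ref{Maslov Index Lemma}, the preimage $\pi^{-1}(D)$ has trivial boundary as a one-cycle in $S^3$, and hence is the shadow of a Whitney disk $\widetilde{\phi} \in \pi_2(\widetilde{{\bf x}}, \widetilde{{\bf y}})$.

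Next I would unpack the basepoint multiplicities of $\widetilde{\phi}$ in terms of those of $\phi$, using the branched cover structure. The branch points $z_0$ and $w_0$ each have a unique preimage, lying in $\pi^{-1}(D_1)$ and $\pi^{-1}(D_2)$ respectively with the same coefficient as downstairs, so
\begin{align*}
n_{z_0}(\widetilde{\phi}) = n_{z_0}(\phi), \qquad n_{w_0}(\widetilde{\phi}) = n_{w_0}(\phi).
\end{align*}
By contrast, each basepoint $z_i$ (respectively $w_i$) with $i \geq 1$ lies in a region $D_j$ with $j \geq 3$, whose preimage consists of $q$ disjoint copies each with coefficient $a_j$; thus each of the $q$ lifts $z_i^k$ (respectively $w_i^k$) satisfies $n_{z_i^k}(\widetilde{\phi}) = n_{z_i}(\phi)$ (respectively $n_{w_i^k}(\widetilde{\phi}) = n_{w_i}(\phi)$).

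Finally I would substitute into the defining formula
\begin{align*}
A_j({\bf x}) - A_j({\bf y}) = \sum_{i=k_{j-1}+1}^{k_j} n_{z_i}(\phi) - \sum_{i=k_{j-1}+1}^{k_j} n_{w_i}(\phi)
\end{align*}
for the relative Alexander multigrading. Summing over the basepoints on $\widetilde{K}$, every pair $(z_i, w_i)$ with $i \geq 1$ contributes $q$ identical copies upstairs, giving the factor of $q$ in the first equality. Summing over the basepoints $z_0, w_0$ on $U$ upstairs and downstairs yields exactly the same quantity, proving the second equality. There is essentially no obstacle beyond correctly bookkeeping which basepoints are fixed by the action and which are permuted; the result then falls out of the preceding lemma without further input.
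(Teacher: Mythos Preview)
Your proposal is correct and follows essentially the same approach as the paper: lift a domain (Whitney disk shadow) via $\pi^{-1}$, observe that the branch-point basepoints $z_0,w_0$ have unchanged multiplicities while each $z_i,w_i$ with $i\geq 1$ acquires $q$ lifts all with the same multiplicity, and substitute into the relative Alexander grading formula. The only cosmetic difference is that you explicitly invoke Lemma~\ref{Maslov Index Lemma} for the existence of the lifted Whitney disk, whereas the paper simply asserts that $\pi^{-1}(D)$ is a domain between the lifted generators.
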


\begin{proof}
Let $D$ be a domain from ${\bf x}$ to ${\bf y}$ on $\mathcal D$.  Then $\pi^{-1}(D) = \widetilde{D}$ is a domain from ${\bf x}$ to ${\bf y}$ on $\widetilde{\mathcal D}$. Since $z_0$ and $w_0$ are branch points of $\pi$, we see that $n_{z_0}(\widetilde{D}) = n_{z_0}(D)$ and $n_{w_0}(\widetilde{D}) = n_{w_0}(D)$.  Therefore
\begin{align*}
A_2(\widetilde{\bf x}) - A_2(\widetilde{\bf y}) &= n_{z_0}(\widetilde{D}) - n_{w_0}(\widetilde{D}) \\
									&=n_{z_0}(D) - n_{w_0}(D) \\
									&=A_2({\bf x}) - A_2({\bf y})
\end{align*}

However, for $i\neq 0$, each of $z_i$ and $w_i$ basepoints on $\mathcal D$ has $q$ preimages in $\widetilde{\mathcal D}$.  Moreover $n_{z_i^j}(\widetilde{D}) = n_{z_i}(D)$ for all $1 \leq i \leq n_1$ and $1 \leq j \leq q$, and similarly for $w_i^j$, so we compute as follows.
\begin{align*}
A_1(\widetilde {\bf x}) - A_1(\widetilde{\bf y}) &= \sum_{i=1}^{n_1}\sum_{j=1}^q n_{z_i^j}(\widetilde{D}) - \sum_{i=1}^{n_1}\sum_{j=1}^q n_{w_i^j}(\widetilde{D}) \\
&= q \left( \sum_{i=1}^{n_1} n_{z_i}(D) - \sum_{i=1}^{n_1} n_{w_i}(D)\right) \\
&= q \left( A_1({\bf x}) - A_1({\bf y})\right)\\[-3.5em]
\end{align*}
\end{proof}

Finally, we can use this fact to compute the relative Alexander gradings in $\widehat{\mathit{CFL}}(\widetilde{\mathcal D})$.

\begin{lemma}

Let ${\bf s}, {\bf r}$ be generators of $\widehat{\mathit{CFL}}(\widetilde{\mathcal D})$, whose projection to $\mathcal D$ can be written as $\pi_{*}({\bf s}) = {\bf s}_1 \cup \cdots \cup {\bf s}_q$ and ${\bf r} = {\bf r}_1 \cup \cdots \cup {\bf r}_q$. Then the relative Alexander gradings between ${\bf s}$ and ${\bf r}$ is described by 
\begin{align*}
A_1({\bf s}) - A_1({\bf r}) &= \sum_{j=1}^{q} \left(A_1({\bf s}_j) - A_1({\bf r}_j)\right) \\
A_2({\bf s}) - A_2({\bf r}) &= \frac{1}{q} \left(\sum_{j=1}^{q} \left(A_2({\bf s}_j) - A_2({\bf r}_j)\right)\right).
\end{align*}
\end{lemma}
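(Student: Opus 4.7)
The plan is to derive both equalities by lifting the comparison to a single shadow-domain calculation on $\widetilde{\mathcal D}$ and then projecting the result to $\mathcal D$, where its difference from a sum of domains realizing the individual pairs $({\bf s_l},{\bf r_l})$ will be a periodic domain. First I would fix any Whitney disk $\widetilde\phi\in\pi_2({\bf s},{\bf r})$ on $\widetilde{\mathcal D}$ with shadow $\widetilde D$. Since $U$ carries only the pair $\{z_0,w_0\}$, the defining formulas for the relative Alexander multigradings yield
\[
A_1({\bf s})-A_1({\bf r})=\sum_{i=1}^{n_1}\sum_{j=1}^{q}\bigl(n_{z_i^j}(\widetilde D)-n_{w_i^j}(\widetilde D)\bigr),\qquad A_2({\bf s})-A_2({\bf r})=n_{z_0}(\widetilde D)-n_{w_0}(\widetilde D).
\]

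Next I would pass to $\mathcal D$ via the branched-cover pushforward of $2$-chains. At an unbranched basepoint $z_i$ or $w_i$ with $i\ge 1$ the preimage consists of the $q$ lifts $\{z_i^j\}_{j=1}^{q}$, so $n_{z_i}(\pi_*\widetilde D)=\sum_{j} n_{z_i^j}(\widetilde D)$; at the branched basepoints $z_0,w_0$ the unique preimage is a branch point at which $\pi$ has local degree $q$, yielding $n_{z_0}(\pi_*\widetilde D)=q\,n_{z_0}(\widetilde D)$ and similarly for $w_0$. Writing $A_j^{\mathcal D}$ for the $K_j$-Alexander functional $\sum n_{z_i}-\sum n_{w_i}$ on $2$-chains on $\mathcal D$, these translations reorganize the previous display as
\[
A_1({\bf s})-A_1({\bf r})=A_1^{\mathcal D}(\pi_*\widetilde D),\qquad A_2({\bf s})-A_2({\bf r})=\tfrac{1}{q}\,A_2^{\mathcal D}(\pi_*\widetilde D),
\]
so the factor of $1/q$ in the second identity of the lemma is precisely the footprint of branching at $\{z_0,w_0\}$.

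For each $l$ I would then choose a $2$-chain $D_l$ on $\mathcal D$ from ${\bf s_l}$ to ${\bf r_l}$ with boundary in $\alpha\cup\beta$ (i.e.\ the shadow of a Whitney disk $\phi_l\in\pi_2({\bf s_l},{\bf r_l})$), and form $D'=D_1+\cdots+D_q$. Both $\pi_*\widetilde D$ and $D'$ are $2$-chains on $\mathcal D$ whose $\alpha\cup\beta$ boundaries start at ${\bf s_1}\cup\cdots\cup{\bf s_q}$ and end at ${\bf r_1}\cup\cdots\cup{\bf r_q}$, so the difference $\pi_*\widetilde D-D'$ has boundary a $\mathbb Z$-linear combination of full $\alpha_i$ and $\beta_i$ curves, i.e.\ is a periodic domain on $\mathcal D$. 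The main remaining input, and the technical heart of the argument, is that $A_j^{\mathcal D}$ vanishes on periodic domains; this is equivalent to the well-definedness of the relative Alexander multigrading on $\widehat{\mathit{CFL}}(\mathcal D)$, since any two Whitney disks between the same pair of generators differ by a periodic domain yet must record the same Alexander grading differences. Granted this, $A_j^{\mathcal D}(\pi_*\widetilde D)=A_j^{\mathcal D}(D')$, and expanding the right-hand side by additivity over the shadows $D_l$ and then substituting into the displays of the previous paragraph produces the two claimed identities of the lemma. The main obstacle is the bookkeeping at the branch points together with verifying that the periodic-domain cancellation still applies to the sum $D'$ rather than to a single Whitney disk shadow; both are routine given the explicit planar structure of $\mathcal D$.
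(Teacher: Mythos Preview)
Your argument is correct and reaches the same conclusion, but by a route dual to the paper's. The paper works at the level of $1$-cycles: it forms the class $\epsilon({\bf s},{\bf r})\in H_1(S^3-(\widetilde K\cup U))$ from paths in $\mathbb T_{\widetilde{\boldsymbol\alpha}}$ and $\mathbb T_{\widetilde{\boldsymbol\beta}}$, projects this cycle to $\mathcal D$, and observes that modulo full $\alpha$- and $\beta$-circles the projection decomposes as $\sum_j \epsilon({\bf s_j},{\bf r_j})$; the grading identities then follow from the linking-number interpretation of the relative Alexander grading (the lemma quoted from \cite[Lemma 3.10]{MR2443092}) together with the elementary behavior of linking numbers under the branched cover. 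You instead work at the level of $2$-chains: you push forward the shadow of a Whitney disk and compare it to a sum of downstairs shadows, using that their difference has boundary a combination of full $\alpha,\beta$ curves and hence is annihilated by the basepoint functionals $A_j^{\mathcal D}$. Your approach is slightly more self-contained, since it uses only the basepoint-multiplicity definition of the grading and the (easy, since $H_2(S^3)=0$) vanishing of $A_j^{\mathcal D}$ on domains with closed $\alpha\cup\beta$ boundary; the paper's approach fits more naturally with its exposition, having already recalled the linking-number lemma. One small point worth making explicit in your write-up is that a Whitney disk $\widetilde\phi\in\pi_2({\bf s},{\bf r})$ (and likewise each $\phi_l$) actually exists; this holds because the ambient $3$-manifold is $S^3$, so every pair of generators is connected.
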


\begin{proof}
The proof is quite similar to the first half of the argument of \cite[Proposition 3.4]{MR2443111}. As in Section \ref{Heegaard Floer Background Section}, choose paths $a \co [0,1] \rightarrow \mathbb T_{\widetilde{\boldsymbol \alpha}}$, $b \co [0,1] \rightarrow \mathbb T_{\widetilde{\boldsymbol \beta}}$, with $\partial a = \partial b = {\bf s} - {\bf r}$. Then let $\epsilon({\bf s},{\bf r}) = a -b $ be a one-cycle in $H_1(S^3 - (\widetilde{K} \cup \widetilde{U}); \mathbb Z)$.  Consider the projection $\pi_{*}(\epsilon({\bf s},{\bf r}))$ to $\mathcal D$.  The restriction of this one-cycle to any $\alpha$ or $\beta$ curve consists of $q$ possibly overlapping arcs. By adding copies of the $\alpha$ or $\beta$ circle if necessary, we may arrange that these arcs connect a point in ${\bf s_j}$ to a point in ${\bf r_j}$.  That is, modulo $\alpha$ and $\beta$ curves, which have linking number zero with the knot, $\pi_{*}\epsilon({\bf s},{\bf r}) = \epsilon({\bf r}_1,{\bf s}_1)+ \cdots +\epsilon({\bf r}_q, {\bf s}_q)$.  Notice also that $\operatorname{lk}(\pi_{*}\epsilon({\bf s},{\bf r}), K) = \operatorname{lk} \big(\epsilon({\bf s},{\bf r}), \widetilde{K}\big)$, whereas $\operatorname{lk}(\pi_{*}\epsilon({\bf s},{\bf r}), U) = q \operatorname{lk} \big(\epsilon({\bf s},{\bf r}), \widetilde{U} \big)$. Therefore we compute
\begin{align*}
A_1({\bf r}) - A_1({\bf s}) &= \operatorname{lk}\left(\epsilon({\bf s},{\bf r}),\widetilde{K}\right)\\
 & =\operatorname{lk}(\pi_{*}(\epsilon({\bf s},{\bf r})), K) \\
						    &=\sum_{j=1}^{q}\operatorname{lk} \left(\epsilon\left({\bf s}_j,{\bf r}_j\right),K \right)\\
			                &=\sum A_1({\bf r}) - A_1({\bf s})
\end{align*}
and moreover
\begin{align*}
A_2({\bf r}) - A_2({\bf s}) &= \operatorname{lk}\left(\epsilon({\bf s},{\bf r}), \widetilde{U}\right)\\
							&=\frac{1}{q}\operatorname{lk}\left(\pi_{*}(\epsilon({\bf s},{\bf r})), U\right)\\
							&=\frac{1}{q}\left(\sum_{j=1}^{q}\left(\operatorname{lk}(\epsilon({\bf s}_j,{\bf r}_j),U 											\right)\right) \\
							&=\frac{1}{q}\left(\sum_{j=1}^{q}\left( A_2({\bf r}_j) - A_2({\bf s}_j)\right)\right)\\[-3.5em]
\end{align*}
\end{proof}

\enlargethispage{2em}

We may now give a proof of Theorem \ref{Murasugi Theorem}.

\begin{proof}[Proof of Theorem \ref{Murasugi Theorem}]

Let $\widetilde{K}$ be a periodic knot with period $q=p^r$ for some prime $p$, and $K$ be its quotient knot, and let $\lambda = \operatorname{lk}(\widetilde{K},\widetilde{U}) = \operatorname{lk}(K,U)$.  Choose a periodic diagram $\widetilde{D}$ for $(S^3, \widetilde{K} \cup \widetilde{U})$ and its quotient diagram $D$ for $(S^3, K \cup U)$ as outlined above.

Consider the Euler characteristic of $\widehat{\mathit{CFL}}(\widetilde{\mathcal D})$ computed modulo $p$.  Let ${\bf s}$ be a generator in  $\widehat{\mathit{CFL}}(\widetilde{\mathcal D})$. Either ${\bf s} = \widetilde{{\bf y}} = \pi^{-1}(\bf y)$ for some ${\bf y} \in \widehat{\mathit{CFL}}(\mathcal D)$, and thus ${\bf s}$ is invariant under the action of $\tau$, or the order of the orbit of ${\bf s}$ under the action of $\tau$ is a multiple of $p$.  Since the action preserves the Alexander and Maslov gradings, modulo $p$ the terms of the Euler characteristic of $\widehat{\mathit{CFL}}(\widetilde{\mathcal D})$ corresponding to noninvariant generators sum to zero.  Moreover, there is a one-to-one correspondence between generators ${\bf y}$ of $\widehat{\mathit{CFL}}(\mathcal D)$ and their total lifts $\pi^{-1}({\bf y}) = {\bf s}$ in $\widehat{\mathit{CFL}}(\widetilde{\mathcal D})$.

This correspondence preserves relative Alexander $A_2$-gradings, and multiplies Alexander $A_1$-gradings by a factor of $q$.  We also claim that it preserves the parity of relative Maslov gradings if $p$ is odd.  In particular, any two generators ${\bf x}$ and ${\bf y}$ of $\widehat{\mathit{CFL}({\mathcal D})}$ are joined by a domain $D$ which does not pass over $w_0$. Let $\widetilde{ D}$ be the lift of this domain. Then the Maslov index $\mu(\widetilde{D})$ is equal to $q\mu(D) - (q-1)n_{z_0}(D)$ by Lemma \ref{Maslov Index Lemma}, and we have
\begin{align*}
M(\widetilde{{\bf x}}) - M(\widetilde{{\bf y}}) &= \mu(\widetilde{D}) - 2\sum_{i=1}^{n} \left(n_{w_i^1}(\widetilde{D}) + \cdots + n_{w_i^q}(\widetilde{D})\right) \\
&= q\mu(D) -(q-1) n_{z_0}(D) - 2q\sum_{i=1}^n n_{w_i}(D) \\[-1ex]
&\equiv \mu(D) - 2\sum_{i=1}^n n_{w_i}(D) \text{ mod } 2\\
&\equiv M({\bf x}) - M({\bf y}) \text{ mod } 2
\end{align*}

Here we have used the assumption that $p$, and therefore $q$, is odd, and $q-1$ is even.  Ergo if $p$ is odd, $(-1)^{M(\widetilde{{\bf x}})} = \pm (-1)^{M({\bf x})}$ where the choice of sign is the same for all generators ${\bf x}$ of $\widehat{\mathit{CFL}}(\mathcal D)$.  (If $p=2$ the sign is of course immaterial.)

Therefore, we capture the following equality.
\begin{align*}
\chi(\widehat{\mathit{CFL}}(\widetilde{\mathcal D})) (t_1, t_2) \doteq \chi(\widehat{\mathit{CFL}}(\widetilde{\mathcal D}) )(t_1^q, t_2) \text{ mod } p
\end{align*}

\noindent Here $\doteq$ denotes equivalence up to an overall factor of $\pm t_1^{i_1}t_2^{i_2}$.

Notice that $\chi(\widehat{\mathit{CFL}}(\widetilde{\mathcal D})) = \chi(\widetilde{\mathit{HFL}}(\widetilde{\mathcal D}))$ and similarly $\chi(\widehat{\mathit{CFL}}(\mathcal D)) =\linebreak \chi(\widetilde{\mathit{HFL}}(\mathcal D))$.  Hence we may express the Euler characteristics of the chain complexes as the multivariable Alexander polynomials of $\widetilde{L} = \widetilde{K} \cup \widetilde{U}$ and $L = K \cup U$ multiplied by appropriate powers of $(1- t_1^{-1})$ and $(1-t_2^{-1})$ according to the number of basepoints on each component of each link. Therefore the equality above reduces to 
\begin{align*}
\Delta_{\widetilde{L}}(t_1, t_2)(1-t_1^{-1})^{qn_1}(1-t_2^{-1}) &\doteq \Delta_{L}(t_1^q, t_2)(1-(t_1^{-1})^{q})^{n_1}(1-t_2^{-1}) \text{ mod } p\\
\Delta_{\widetilde{L}}(t_1, t_2)(1-t_1^{-1})^{qn_1} &\doteq \Delta_{L}(t_1^q, t_2)(1-(t_1^{-1})^{q})^{n_1} \text{ mod } p.
\end{align*}
\noindent Recalling that $q$ is a power of $p$, and that therefore $(a+b)^q \equiv a^q + b^q$ mod $p$, we may reduce farther.
\begin{align*}
\Delta_{\widetilde{L}}(t_1,t_2)(1-t_1^{-1})^{qn_1} &\doteq \Delta_{L}(t_1^q, t_2)(1-t_1^{-1})^{qn_1} \text{ mod } p\\
\Delta_{\widetilde{L}}(t_1,t_2) &\doteq \Delta_{L}(t_1^q, t_2) \text{ mod } p
\end{align*}
We now set $t_2 = 1$.  By Lemma \ref{Two Component Link Lemma}, this reduces the equality above to
\begin{align*}
\Delta_{\widetilde{K}}(t_1)(1 \!+\! t_1 \!+\! \cdots \!+\! t_1^{\lambda-1}) &\doteq \Delta_{K}(t_1^q)(1 \!+\! t_1^q \!+\! (t_1^q)^2 \!+\! \cdots \!+\! ((t_1)^q)^{\lambda-1} \text{ mod } p.
\end{align*}
\noindent Again using the fact that $q$ is a power of $p$, we produce
\begin{align*}
\Delta_{\widetilde{K}}(t_1)(1 + t_1 + \cdots + t_1^{\lambda-1}) &\doteq (\Delta_{K}(t_1))^q(1 + t_1 + t_1^2 + \cdots + t_1^{(\lambda-1)})^q \text{ mod } p \\
\Delta_{\widetilde{K}}(t_1) &\doteq (\Delta_K(t_1))^q (1 + t + \cdots + t^{\lambda -1})^{q-1} \text{ mod } p.
\end{align*}
\noindent This last is Murasugi's condition.\end{proof}

\subsection{Spectral sequences for doubly-periodic knots}

From now on we restrict ourselves entirely to the case of $\widetilde{K}$ a doubly-periodic knot.  Moreover, we insist that $\widetilde{K}$ be oriented such that $\operatorname{lk}(\widetilde{K},\widetilde{U}) = \lambda$ is positive.  Recall that $\lambda$ is necessarily odd; otherwise $\widetilde{K}$ would be disconnected. We proceed to explain how Corollary \ref{Genus Inequality Corollary} follows from Theorem \ref{Knot Floer Homology Spectral Sequence}.  Consider the map $\tau_{\#}\co \widehat{\mathit{CFL}}(\widetilde{\mathcal D}) \rightarrow \widehat{\mathit{CFL}}(\widetilde{\mathcal D})$ induced by the involution $\tau$ on $\widetilde{\mathcal D}$.  As a consequence of our application of Seidel and Smith's localization theory to the symmetric products $\operatorname{Sym}^{2n_1}(S^2 \backslash (\widetilde{\bf w} \cup \widetilde{\bf z}))$ and $\operatorname{Sym}^{n_1}(S^2 \backslash ({\bf w} \cup {\bf z}))$, we will replace $\widehat{\mathit{CFL}}(\widetilde{\mathcal D})$ with a chain homotopy equivalent complex $\widehat{\mathit{CFL}}(\widetilde{\mathcal D})'$ with the same invariant generators and $\tau_{\#}$ with an involution $\tau_{\#}'$, not necessarily chain homotopy equivalent, which has the same fixed set as $\tau_{\#}$ and is a chain map. (For more on this replacement, see Section \ref{Floer Cohomology Section}). This map $\tau_{\#}'$ also preserves Alexander gradings. Consider the double complex
$$
\xymatrix{
 & \ar[d]^-{\partial'} & \ar[d]^-{\partial'} & \ar[d]^-{\partial'} & \\
0 \ar[r] & \widehat{\mathit{CFL}}_{i+1}(\widetilde{\mathcal D})' \ar[r]^-{1 + \tau_{\#}'} \ar[d]^-{\partial'} & \widehat{\mathit{CFL}}_{i+1}(\widetilde{\mathcal D})' \ar[r]^-{1 + \tau_{\#}'} \ar[d]^-{\partial'}& \widehat{\mathit{CFL}}_{i+1}(\widetilde{\mathcal D})' \ar[r]^-{1 + \tau_{\#}'} \ar[d]^-{\partial'} &\\
0 \ar[r] & \widehat{\mathit{CFL}}_{i}(\widetilde{\mathcal D})' \ar[r]^-{1 + \tau_{\#}'} \ar[d]^-{\partial'} & \widehat{\mathit{CFL}}_{i}(\widetilde{\mathcal D})' \ar[r]^-{1 + \tau_{\#}'} \ar[d]^-{\partial'}& \widehat{\mathit{CFL}}_{i}(\widetilde{\mathcal D})' \ar[r]^-{1 + \tau_{\#}'} \ar[d]^-{\partial'} &\\
0 \ar[r] & \widehat{\mathit{CFL}}_{i-1}(\widetilde{\mathcal D})' \ar[r]^-{1 + \tau_{\#}'} \ar[d]^-{\partial'} & \widehat{\mathit{CFL}}_{i-1}(\widetilde{\mathcal D}) \ar[r]^-{1 + \tau_{\#}'} \ar[d]^-{\partial'}& \widehat{\mathit{CFL}}_{i-1}(\widetilde{\mathcal D})' \ar[r]^-{1 + \tau_{\#}'} \ar[d]^-{\partial'}& \\
 & & & &
}
$$

\begin{definition}
The homology of the complex $(\widehat{\mathit{CFL}}(\widetilde{\mathcal D})' \otimes \mathbb Z_2[[\theta]], d +\linebreak \theta(1+ \tau_{\#}'))$ is $\widehat{\mathit{HFL}}_{\operatorname{Borel}}(\widetilde{\mathcal D})$.
\end{definition}

Computing vertical differentials first, we obtain a spectral sequence from $\widetilde{\mathit{HFL}}(\widetilde{\mathcal D}) \otimes \mathbb Z_2[[\theta]]$ to $\widehat{\mathit{HFL}}_{\operatorname{Borel}}(\widetilde{\mathcal D})$. Moreover, our application of Seidel and Smith's localization theory will lead us to the following theorem.

\begin{theorem}
There is a localization map
\begin{align*}
\widehat{\mathit{HFL}}_{Borel}(\widetilde{\mathcal D})\rightarrow \widetilde{\mathit{HFL}}(\mathcal D) \otimes Z_2[[\theta]]
\end{align*}
which becomes an isomorphism after tensoring with $\mathbb Z_2((\theta))$.
\end{theorem}

Therefore after tensoring the spectral sequence from $\widetilde{\mathit{HFL}}(\widetilde{\mathcal D}) \otimes \mathbb Z_2[[\theta]]$ to $\widehat{\mathit{HFL}}_{\operatorname{Borel}}(\widetilde{\mathcal D})$ with $\mathbb Z_2((\theta))$, we obtain the spectral sequence of Theorem~\ref{Link Floer Homology Spectral Sequence}. The proof reduces to finding a \textit{stable normal trivialization} for the triple $(\operatorname{Sym}^{2n_1}(\Sigma(S)\backslash \widetilde{\bf w}), \mathbb T_{\widetilde{\boldsymbol \alpha}}, \mathbb T_{\widetilde{\boldsymbol \beta}})$; see Sections 4--6 for details. Since  $\partial$ and $\tau_{\#}'$ also preserve both Alexander gradings on $\widehat{\mathit{CFL}}(\widetilde{D})$, the spectral sequence splits along the grading $(A_1,A_2)$.

The knot Floer homology spectral sequence of Theorem \ref{Knot Floer Homology Spectral Sequence} arises from a similar double complex.  Recall that for a link $L=K_1\cup\cdots\cup K_\ell$, the differential $\partial_{K_j}$ corresponds to forgetting the component $K_j$ of the link.  Therefore for the link $\widetilde{L}=\widetilde{K}\cup \widetilde{U}$ we begin with the chain complex $(\widehat{\mathit{CFK}}(\widetilde{\mathcal D}), \partial_{\widetilde{U}})$ and again replace with a chain homotopy equivalent complex $(\widehat{\mathit{CFK}}(\widetilde{\mathcal D})'', \partial'_{\widetilde{U}})$ and a possibly not chain homotopy equivalent involution $\tau_{\#}''$ which is a chain map, preserves the Alexander grading $A_1$, and has the same fixed set as $\tau_{\#}$. Consider the following double complex.
$$
\xymatrix{
 & \ar[d]^-{\partial'_{\widetilde{U}}} & \ar[d]^-{\partial'_{\widetilde{U}}} & \ar[d]^-{\partial'_{\widetilde{U}}} & \\
0 \ar[r] & \widehat{\mathit{CFK}}_{i+1}(\widetilde{\mathcal D})'' \ar[r]^-{1 + \tau_{\#}''} \ar[d]^-{\partial'_{\widetilde{U}}} & \widehat{\mathit{CFK}}_{i+1}(\widetilde{\mathcal D})'' \ar[r]^-{1 + \tau_{\#}''} \ar[d]^-{\partial'_{\widetilde{U}}}& \widehat{\mathit{CFK}}_{i+1}(\widetilde{\mathcal D})'' \ar[r]^-{1 + \tau_{\#}''} \ar[d]^-{\partial'_{\widetilde{U}}} &\\
0 \ar[r] & \widehat{\mathit{CFK}}_{i}(\widetilde{\mathcal D})'' \ar[r]^-{1 + \tau_{\#}''} \ar[d]^-{\partial'_{\widetilde{U}}} & \widehat{\mathit{CFK}}_{i}(\widetilde{\mathcal D})'' \ar[r]^-{1 + \tau_{\#}''} \ar[d]^-{\partial'_{\widetilde{U}}}& \widehat{\mathit{CFK}}_{i}(\widetilde{\mathcal D})'' \ar[r]^-{1 + \tau_{\#}''} \ar[d]^-{\partial'_{\widetilde{U}}} &\\
0 \ar[r] & \widehat{\mathit{CFK}}_{i-1}(\widetilde{\mathcal D})'' \ar[r]^-{1 + \tau_{\#}''} \ar[d]^-{\partial'_{\widetilde{U}}} & \widehat{\mathit{CFK}}_{i-1}(\widetilde{\mathcal D})'' \ar[r]^-{1 + \tau_{\#}''} \ar[d]^-{\partial'_{\widetilde{U}}}& \widehat{\mathit{CFK}}_{i-1}(\widetilde{\mathcal D})'' \ar[r]^-{1 + \tau_{\#}''} \ar[d]^-{\partial'_{\widetilde{U}}}& \\
 & & & &
}
$$

\begin{definition}
The homology of the complex $(\widehat{\mathit{CFK}}(\mathcal D)'' \otimes \mathbb Z_2[[\theta]], \partial_{\widetilde{U}}' + \theta(1+ \tau_{\#}'))$ is $\widehat{\mathit{HFK}}_{\operatorname{Borel}}(\widetilde{\mathcal D})$.
\end{definition}

Computing vertical differentials first gives a spectral sequence from $E^1 = \widetilde{\mathit{HFK}}(\widetilde{\mathcal D}) \otimes \mathbb Z_2[[\theta]]$ to $\widehat{\mathit{HFK}}_{\operatorname{Borel}}(\widetilde{\mathcal D})$. As before, our application of Seidel and Smith's localization theory will lead us to the following theorem.

\begin{theorem}
There is a localization map
\begin{align*}
\widehat{\mathit{HFK}}_{Borel}(\widetilde{\mathcal D})\rightarrow \widetilde{\mathit{HFK}}(\mathcal D) \otimes Z_2[[\theta]]
\end{align*}
which becomes an isomorphism after tensoring with $\mathbb Z_2((\theta))$.
\end{theorem}

Therefore after tensoring the spectral sequence from $\widetilde{\mathit{HFK}}(\widetilde{\mathcal D}) \otimes \mathbb Z_2[[\theta]]$ to $\widehat{\mathit{HFK}}_{\operatorname{Borel}}(\widetilde{\mathcal D})$ with $\mathbb Z_2((\theta))$, we obtain the spectral sequence of \ref{Knot Floer Homology Spectral Sequence}. As $\partial'_{\widetilde{U}}$ does not preserve Alexander $A_2$ gradings relative to the axis $U$, neither does the spectral sequence; however, it still splits along Alexander $A_1$ gradings, the grading relative to the knot itself.

Let us now complete the proofs of Theorems \ref{Link Floer Homology Spectral Sequence} and \ref{Knot Floer Homology Spectral Sequence}, and Corollaries \ref{Alexander Gradings Knot Lemma} and \ref{Alexander Gradings Link Lemma}, by fixing the relationship between the absolute Alexander gradings of $\widetilde{\mathit{HFL}}(\widetilde{\mathcal D})$ and $\widetilde{\mathit{HFL}}(\mathcal D)$.

For ${\bf x}$ any intersection point in $\mathbb T_{\boldsymbol \alpha} \cap \mathbb T_{\boldsymbol \beta}$, let $\underline{\mathfrak s}_{{\bf w,z}}({\bf x})$ be the relative $\operatorname{spin}^{\operatorname{c}}$ structure on $(S^3-\nu(L), \partial(S^3-\nu(L)))$ associated to ${\bf x}$ as in Section 2. Then since the map $\pi\co S^3 - \nu(\widetilde{L}) \rightarrow S^3-\nu(L)$ is a local diffeomorphism, we can pull back relative $\operatorname{spin}^{\operatorname{c}}$ structures along this map, and examining the construction in \cite[Section 3.6]{MR2443092} we see that the pullback of $\underline{\mathfrak s}_{{\bf w,z}}({\bf x})$ is the relative $\operatorname{spin}^{\operatorname{c}}$ structure on $(S^3 - \nu(\widetilde{L}), \partial(S^3 - \nu(\widetilde{L}) ))$ associated to $\widetilde{\bf x}$ the lift of $\bf{x}$. In particular, taking the first Chern class of both relative $\operatorname{spin}^{\operatorname{c}}$ structures, we conclude that $\pi^*(c_1(\underline{\mathfrak s}_{{\bf w,z}}({\bf x}))) = c_1(\underline{\mathfrak s}_{{\bf w,z}}(\widetilde{\bf x}))$.

Now let $\gamma$ be an arbitrary element of $H_2(S^3-\nu(\widetilde{L}), \partial(S^3-\nu(\widetilde{L})))\simeq H_2(S^3, \widetilde{L})$. As before, let this group be generated by the classes dual to $[\mu_{\widetilde{K}}]$ and $[\mu_{\widetilde{U}}]$ in $H_1(L)$, and called by the same names via our previous abuse of notation, and let $\gamma = c[\mu_{\widetilde{K}}] + d[\mu_{\widetilde{U}}]$ for integers $c,d$. Moreover, let $H_2(S^3-\nu(L), \partial(S^3-\nu(L)))\simeq H_2(S^3, L)$ be similarly generated by classes $[\mu_K]$ and $[\mu_U]$, and let $H^2(S^3-\nu(L), \partial(S^3-\nu(L)))\simeq H^2(S^3, L)$ be generated by their duals $[\mu_K]^*$ and $[\mu_U]^*$. Similarly, we have $H^2(S^3-\nu(\widetilde{L}), \partial(S^3-\nu(\widetilde{L})))\simeq H^2(S^3, \widetilde{L})$ generated by the cohomology classes $[\mu_{\widetilde{K}}]^*$ and $[\mu_{\widetilde{L}}]^*$. Finally, suppose that $c_1(\underline{\mathfrak s}_{{\bf w,z}}({\bf x})) = e[\mu_K]^* + f[\mu_U]^*$. Then we compute:
\begin{align*}
\langle c_1(\underline{\mathfrak s}_{{\bf w,z}}(\widetilde{\bf x})), \gamma \rangle &= \langle \pi^*(c_1(\underline{\mathfrak s}_{{\bf w,z}}({\bf x}))), \gamma \rangle 
				= \langle c_1(\underline{\mathfrak s}_{{\bf w,z}}({\bf x})), \pi_*(\gamma) \rangle \\
				&= \langle c_1(\underline{\mathfrak s}_{{\bf w,z}}({\bf x})), 2c[\mu_K] + d[\mu_U] \rangle \\
				&= 2ec+fd
\end{align*}
Since the equation above holds for all $c,d \in \mathbb Z$, we conclude that if $c_1(\underline{\mathfrak s}_{{\bf w,z}}({\bf x}))\linebreak = e[\mu_K]^* + f[\mu_U]^*$, we must then have $c_1(\underline{\mathfrak s}_{{\bf w,z}}(\widetilde{\bf x})) = 2e[\widetilde{\mu_K}]^* + f[\widetilde{\mu_U}]^*$. This means that upstairs we have
\begin{align*}
c_1(\underline{\mathfrak s}_{{\bf w,z}}(\widetilde{\bf x})) + PD([\mu_{\widetilde{K}}]+ [\mu_{\widetilde{U}}]) &= 2PD(A_1[\mu_{\widetilde{K}}] + A_2[\mu_{\widetilde{U}}]) \\
(2e+1)[\mu_{\widetilde{K}}]^* + (f+1)[\mu_{\widetilde{U}}]^* &= 2A_1[\mu_{\widetilde{K}}]^* + 2A_2[\mu_{\widetilde{U}}]^*
\end{align*}
whereas downstairs we have 
\begin{align*}
c_1(\underline{\mathfrak s}_{{\bf w,z}}({\bf x}) + PD([\mu_{\widetilde{K}}]+ [\mu_{\widetilde{U}}]) &= 2PD(A_1[\mu_{\widetilde{K}}] + A_2[\mu_{\widetilde{U}}]) \\
(e+1)[\mu_{\widetilde{K}}]^* + (f+1)[\mu_{\widetilde{U}}]^* &= 2A_1[\mu_{\widetilde{K}}]^* + 2A_2[\mu_{\widetilde{U}}]^*
\end{align*}

This justifies the assertion that the spectral sequence carries the gradings $(A_1,A_2) = (2a+\frac{1}{2}, b)$ upstairs to $(A_1,A_2) = (a+ \frac{1}{2},b)$ downstairs, where $a=e$ and $b= \frac{f+1}{2}$.

There is an alternate proof, more complicated but also more helpful to our geometric intuition, using the link Floer homology categorification of the Thurston norm. Let $x_L([\mu_K])$ be the Thurston seminorm of the class dual to $[\mu_K]$ in $H_2(S^3,L)$ and $x_{\widetilde{L}}([\mu_{\widetilde{K}}])$ be the Thurston seminorm of the class dual to $[\mu_{\widetilde{K}}]$ in $H_2(S^3,L)$.  Notice that if $F$ is a Thurston-norm minimizing surface for the class dual to $[\mu_K]$ of Euler characteristic $\chi(F) = -x_L([\mu_K])$, then the preimage $\widetilde{F} = \pi^{-1}(F)$ under the ordinary double cover $\pi\co S^3-\nu(\widetilde{L}) \rightarrow S^3-\nu(L)$ is an embedded surface representing the class dual to $[\mu_{\widetilde{K}}]$ in $H_2(S^3,L)$, and $\chi(\widetilde{F}) = 2\chi(F)$.  Hence $x_{\widetilde{L}}([\mu_{\widetilde{K}}]) \leq 2x_L([\mu_K])$.

However, recall that $x_{L}([\mu_K]) + 1$ is exactly the breadth of the Alexander $A_1$ grading in $\widehat{\mathit{HFL}}(S^3, L)$, and therefore that the breadth of the $A_1$ grading in $\widetilde{\mathit{HFL}}(\mathcal D)$ is $x_{L}([\mu_K]) + 1 + (n_1 -1) = x_{L}([\mu_K]) + n_1$.  Similarly, the breadth of the $A_1$ grading in $\widehat{\mathit{HFL}}(S^3, \widetilde{L})$ is $x_{\widetilde{L}}([\mu_K]) +1$, and therefore the total breadth of the $A_1$ grading in $\widetilde{\mathit{HFL}}(\widetilde{\mathcal D})$ is $x_{\widetilde{L}}([\mu_K]) + 1 + 2n_1 - 1 = x_{\widetilde{L}}([\mu_{\widetilde{K}}]) + 2n_1$.  Moreover, we have seen that in the spectral sequence from $E^1 = \widetilde{\mathit{HFL}}(\widetilde{\mathcal D}) \otimes \mathbb Z_2((\theta))$ to $E^{\infty} \cong \widetilde{\mathit{HFL}}(\mathcal D) \otimes \mathbb Z_2((\theta))$, the relative $A_1$ grading of two elements on the $E^1$ page is twice the relative $A_1$ grading of their residues on the $E^{\infty}$ page.  Therefore the total breadth of the $A_1$ grading on the $E^1$ page of the spectral sequence is at least twice the breadth of the $A_1$ grading on the last page of the spectral sequence. We thus have the inequality
\begin{align*}
x_{\widetilde{L}}([\mu_{\widetilde{K}}]) + 2n_1 &\geq 2x_{L}([\mu_K]) + 2n_1 \\
										&\geq 2(x_L([\mu_K]) + n_1)\\
										\text{i.e. } x_{\widetilde{L}}([\mu_{\widetilde{K}}])&\geq 2x_L([\mu_K]).
\end{align*}
Consequently we see directly from the spectral sequence that
\begin{align*}
x_{\widetilde{L}}([\mu_{\widetilde{K}}]) = 2 x_L([\mu_K]).
\end{align*}
\noindent This implies that the breadth of the $A_1$ grading on the $E^1$ page is exactly twice the breadth of the $A_1$ grading on the $E^{\infty}$ page.  Therefore the breadth of the $A_1$ grading cannot decrease over the course of the spectral sequence.  A similar argument, sans the factors of two, shows that $x_{\widetilde L}([\mu_{\widetilde{U}}]) = x_{L}([\mu_U])$ and that the breadths of the Alexander $A_2$ grading of the $E^1$ and $E^{\infty}$ pages of the spectral sequence are the same.  Therefore the breadth of the $A_2$ grading does not change either over the course of the spectral sequence.

In particular, the top $A_1$ grading in $\widetilde{\mathit{HFL}}(\widetilde{\mathcal D})$ is sent to the top $A_1$ grading in $\widetilde{\mathit{HFL}}(\mathcal D)$.  However, by the symmetry of $\widehat{\mathit{HFL}}$ and the determination of the breadth of the $A_1$ grading by the Thurston norm, the top $A_1$ grading of $\widetilde{\mathit{HFL}}(\widetilde{\mathcal D})$ is the same as the top $A_1$ grading of $\widehat{\mathit{HFL}}(S^3, \widetilde{L})$, which is $A_1 = \frac{x_{\widetilde{L}}([\mu_{\widetilde{K}}]) +1}{2} = \frac{2x_L(\mu_K) +1}{2}$.  Similarly, the top $A_1$ grading of $\widetilde{\mathit{HFL}}(\mathcal D)$ is the top $A_1$ grading of $\widehat{\mathit{HFL}}(S^3, L)$, to wit, $A_1 = \frac{x_L([\mu_K]) + 1}{2}$.  Therefore the spectral sequence carries the $A_1$ Alexander grading $\frac{2x_L([\mu_K]) + 1}{2}$ on the $E^1$ page to the $A_1$ grading $\frac{x_L([\mu_K]) + 1}{2}$ on the $E^{\infty}$ page. Therefore in general the $A_1$ grading $\frac{2x_L([\mu_K]) + 1}{2} + 2a$ on the $E^1$ page is sent to the $A_1$ grading $\frac{x_L([\mu_K]) +1}{2} + a$ for any integer $a$.  Notice that $x_L([\mu_K])$ is an even number: suppose $F$ is a Thurston-seminorm minimizing surface for $K$ in $S^3-L$ of genus $g'$ with geometric intersection number $\#(F\cap U) =\Lambda$.  Since the algebraic intersection number $\lambda$ is odd, so is $\Lambda$.  Then $x_L([\mu_K]) = 1-2g'-\Lambda$ is even.  Therefore $\frac{x_L([\mu_K])}{2}$ is an integer, and we can take $a= \frac{x_L([\mu_K])}{2}$ to see that the $A_1$ grading $\frac{1}{2}$ on the $E^1$ page is sent to the $A_1$ grading $\frac{1}{2}$ on the $E^{\infty}$ page.

A parallel but simpler argument for the $A_2$ gradings shows that the $A_2$ grading $b$ on the $E^1$ page is sent precisely to the $A_2$ grading $b$ on the $E^{\infty}$ page, using the fact that relative $A_2$ gradings of elements on the $E^1$ page that survive in the spectral sequence are preserved rather than doubled on the $E^{\infty}$ page.

By Lemma \ref{Gradings Shift Lemma}, computing the knot Floer homology complex using $\partial_{\widetilde{U}}$ yields a downward shift in Alexander gradings by $\frac{\operatorname{lk}(\widetilde{K}, \widetilde{U})}{2}$ on the $E^1$ page and $\frac{\operatorname{lk}(K, U)}{2}$ on the $E^{\infty}$ page.  Since both of these numbers are $\frac{\lambda}{2}$, we obtain an overall downward shift of $\frac{\lambda}{2}$ between that link and knot Floer homology spectral sequences.

This nearly completes the proofs of Theorems \ref{Link Floer Homology Spectral Sequence} and \ref{Knot Floer Homology Spectral Sequence}. Corollaries~\ref{Alexander Gradings Link Lemma} and \ref{Alexander Gradings Knot Lemma} follow almost immediately, but the last statement in Corollary \ref{Alexander Gradings Knot Lemma} deserves a few more words. If Edmonds' condition is sharp, then $g(\widetilde{K})  = 2g(K) + \frac{\lambda-1}{2}$. Since the knot Floer spectral sequence sends the Alexander grading $2a + \frac{1-\lambda}{2}$ to the Alexander grading $a+\frac{1-\lambda}{2}$, it sends the Alexander grading $g(\widetilde{K})$ to $g(K)$. Therefore there is a rank inequality between $\widetilde{\mathit{HFK}}(\widetilde{\mathcal D}, g(\widetilde{K}))$ to $\widetilde{\mathit{HFK}}(\mathcal D, g(K))$. However, recall that the summands of the vector space $V$ carry Alexander gradings $0$ and $1$, so $\widetilde{\mathit{HFK}}(\widetilde{\mathcal D}, g(\widetilde{K})) \simeq \widehat{\mathit{HFK}}(S^3,\widetilde{K}, g(\widetilde{K}))\otimes W$ and $\widetilde{\mathit{HFK}}(\mathcal D, g(K)) \simeq \widehat{\mathit{HFK}}(S^3,K, g(K))\otimes W$. So ignoring a consistent factor of two from $W$, we obtain the last inequality in Corollary \ref{Alexander Gradings Knot Lemma}.

\begin{remark} The observations that $x_{\widetilde{L}}([\mu_{\widetilde{K}}]) = 2 x_L([\mu_K])$ and $x_{\widetilde L}([\mu_{\widetilde{U}}])\linebreak = x_{L}([\mu_U])$ are a special case of Gabai's theorem \cite[Corollary 6.13]{MR723813} that the Thurston norm is multiplicative for ordinary finite covers.  Indeed, by appealing to Gabai's theorem (or by constructing a $\mathbb Z_p$ analog of Seidel and Smith's localization spectral sequence) we could similarly fix the relationship between the absolute gradings of $\widetilde{\mathit{HFL}}(\widetilde{\mathcal D})$ and $\widetilde{\mathit{HFL}}(\mathcal D)$ for $p$-periodic knots.
\end{remark}

\begin{remark}

This alternate proof illustrates the important role of the vector space $V_1$ in the existence of the spectral sequence; the breadth of the $A_1$ grading in $\widehat{\mathit{HFL}}(S^3, \widetilde{L})$ is one less than twice the breadth of the $A_1$ grading in $\widehat{\mathit{HFL}}(S^3, L)$, a trouble which is corrected for by increasing the breadth upstairs by $2n_1 - 1$ and the breadth downstairs by $n_1 -1$.  While one might hope to produce a spectral sequence in which $n_1 =1$, it seems impossible to produce a link Floer homology spectral sequence for doubly periodic knots which does not involve at least one copy of $V_1$ on the $E^1$ page.

\end{remark}

Having fixed the Alexander gradings in the spectral sequence, we may provide a proof of Corollary \ref{Genus Inequality Corollary} (Edmonds' Condition) from \ref{Knot Floer Homology Spectral Sequence}.

\begin{proof} [Proof of Corollary \ref{Genus Inequality Corollary}]

Once again, let $V_1$ denote a two-dimensional vector space over $\mathbb Z_2$ whose two sets of gradings $(M, (A_1, A_2))$ are $(0,(0,0))$ and $(-1, (-1, 0))$, and likewise let $W$ be a two-dimensional vector space over $\mathbb Z_2$ whose two sets of gradings $(M, (A_1,A_2))$ are $(0,(0,0))$ and $(-1,(0, 0))$.  By Theorem \ref{Knot Floer Homology Spectral Sequence}, there is a spectral sequence whose $E^1$ page is $\widehat{\mathit{HFK}}(S^3, \widetilde{K}) \otimes V_1^{\otimes n_1-1} \otimes W \otimes \mathbb Z_2((\theta))$ to a theory whose $E^{\infty}$ page is $\mathbb Z((\theta))$-isomorphic to $\widehat{\mathit{HFK}}(S^3, K) \otimes V_1^{\otimes 2n_1-1} \otimes W \otimes \mathbb Z((\theta))$.  Moreover, this spectral sequence splits along the $A_1$ grading, and by Theorem \ref{Alexander Gradings Knot Lemma} the subgroup of the $E^1$ page in $A_1$ grading $\frac{1-\lambda}{2} + 2a$ is carried to the subgroup of the $E^{\infty}$ page in grading $\frac{1-\lambda}{2} +a$.  Moreover, the top $A_1$ grading on the $E^1$ page is $g(\widetilde{K})$ and the top $A_1$ grading on the $E^{\infty}$ page is $g(K)$.  Since there must be something on the $E^1$ page in the $A_1$ grading which converges to the $A_1$ grading $g(K)$ on the $E^{\infty}$ page, we have the following inequality.
\begin{align*}
g(\widetilde{K}) - \frac{1-\lambda}{2} &\geq 2\left(g(K) - \frac{1 - \lambda}{2}\right) \\
\text{i.e. } g(\widetilde{K}) &\geq 2g(K) + \frac{\lambda - 1}{2} \\[-3.5em]
\end{align*}
\end{proof}

Finally, let us prove Corollary \ref{Fiberedness Corollary}.

\begin{proof}[Proof of Corollary \ref{Fiberedness Corollary}]

Suppose Edmonds' condition is sharp, that is, that $g(\widetilde{K}) = 2g(K) + \frac{\lambda - 1}{2}$. As in the proof of Corollary \ref{Alexander Gradings Knot Lemma}, this implies that the knot Floer spectral sequence sends the Alexander grading $g(\widetilde{K}) = 2g(K) + \frac{\lambda -1}{2}$ to the Alexander grading $g(K)$.  That is, sharpness of Edmonds' condition exactly says that the top Alexander grading on the $E^1$ page is not killed in the spectral sequence.

Suppose now that $\widetilde{K}$ is fibered.  Then the top Alexander grading of $\widetilde{\mathit{HFK}}(\widetilde{\mathcal D}) \otimes \mathbb Z_2((\theta)) = (\widehat{\mathit{HFK}}(S^3, K) \otimes V_1^{\otimes (2n_1-1)}\otimes W) \otimes \mathbb Z_2((\theta))$ has rank\linebreak two as a $\mathbb Z_2((\theta))$ module.  (The knot Floer homology of fibered knots is monic in the top Alexander grading by the forward direction of  Lemma \ref{Fiberedness Condition Lemma}, and the factor of $W$ doubles the number of entries in each Alexander grading.)  Since this Alexander grading is not killed in the spectral sequence, the top Alexander grading of $\widetilde{\mathit{HFK}}(\mathcal D) \otimes \mathbb Z_2((\theta))=\widehat{\mathit{HFK}}(S^3, K) \otimes V_1^{\otimes (n_1-1)} \otimes W) \otimes \mathbb Z_2((\theta))$ also has rank two as a $\mathbb Z_2((\theta))$-module.  Therefore $K$ is also fibered.\end{proof}

\begin{remark}

The converse of Corollary \ref{Fiberedness Corollary}, that when Edmonds' condition is sharp, the quotient knot $K$ being fibered implies $\widetilde{K}$ is fibered, is false.  Consider the following counterexample: the knot $\widetilde{K}=10_{144}$ is doubly periodic with quotient knot $K = 3_1$, the trefoil.  The linking number $\lambda = \operatorname{lk}(\widetilde{K}, \widetilde{U}) = 1$.  Since $g(10_{144}) = 2$ and $g(3_1)=1$, we see that $g(\widetilde{K}) = 2 = 2g(K) + \frac{\lambda-1}{2}$.  Therefore Edmonds' condition is sharp.  However, the trefoil is fibered, whereas $10_{144}$ is not \cite{KnotInfo}.

\end{remark}

\subsection{An example of an obstruction not given by Alexander polynomials and genera}

We now give an example of a knot $\widetilde{K}$ which is not obstructed from being two-periodic with a specific quotient knot $K$ by Edmonds' and Murasugi's conditions, taken together, but for which the spectral sequence of \ref{Knot Floer Homology Spectral Sequence} cannot exist. Indeed, we can do slightly better. There is a second condition of Murasugi (\textit{not} recovered by the spectral sequences of this paper), as follows.

\begin{theorem}\cite[Corollary 1]{MR0292060}\label{Second Condition}
If $\widetilde{K}$ is $q$-periodic with quotient knot $K$, then $\Delta_K(t)|\Delta_{\widetilde{K}}(t)$.
\end{theorem}

Our example will have $\Delta_K(t)=1$, and therefore will pass both of Murasugi's conditions.

Let $\tilde{K}$ be the connect sum of the Kinoshita-Terasaka knot $11\mathrm{n}42$ and the right-handed trefoil $3_1$. Then $g(\widetilde{K})= g(11\mathrm{n}42) + g(3_1) = 3$. Moreover since the Kinoshita-Terasaka knot has trivial Alexander polynomial, $\Delta_{\tilde{K}}(t) = \Delta_{3_1}(t) = 1 -t + t^2 \equiv (1 + t + t^2)(1)^2$ modulo two. Suppose that $\widetilde{K}$ is two-periodic. Then by Murasugi's condition, we must have $\lambda=3$. If $K$ is any genus one knot with trivial Alexander polynomial, then we see that $\Delta_{\tilde{K}}(t)\equiv(1+t+t^2)(\Delta(K))^2$ modulo two, $\Delta_K(t) | \Delta_{\widetilde{K}}(t)$, and finally $g(\widetilde{K}) = 2g(K) + \frac{\lambda-1}{2}$. Therefore neither Edmonds' nor either of Murasugi's conditions ob\-structs $\widetilde{K}$ from being two-periodic with quotient knot $K$.

Now, consider candidate quotient knot $K = D(4_1)$ the untwisted positive Whitehead double of the figure-eight knot, which is a genus one knot with trivial Alexander polynomial. Suppose $\widetilde{K}$ is two-periodic with quotient knot $K$. Since Edmonds' condition is sharp, by Corollary \ref{Alexander Gradings Link Lemma}, there is a rank inequality between $\widehat{\mathit{HFK}}(S^3, \widetilde{K}, 3)$ and $\widehat{\mathit{HFK}}(S^3, K, 1)$. Let us consider the ranks of these groups.

%By Theorem \ref{Knot Floer Homology Spectral Sequence} there is a spectral sequence from $\widetilde{\mathit{HFK}}(\widetilde{\mathcal D})\otimes \mathbb Z_2((\theta))$ to $\widetilde{\mathit{HFK}}(\mathcal D)\otimes \mathbb Z_2((\theta))$. 
%As a consequence, by Theorem \ref{Alexander Gradings Knot Lemma}, the rank of $\widetilde{\mathit{HFK}}(\widetilde{\mathcal D})$ in the Alexander grading $A_1 = 2s+ \frac{1 - \lambda}{2}$ is greater than the rank of $\widetilde{\mathit{HFK}}(\mathcal D)$  in the Alexander grading $A_1 = s + \frac{1-\lambda}{2}$ for all $s \in \mathbb Z$. Since the linking number $\lambda=3$, we conclude that rank of $\widetilde{\mathit{HFK}}(\widetilde{\mathcal D}, 2s-1)$ is greater than $\widetilde{\mathit{HFK}}(\widetilde{\mathcal D}, s-1)$. In particular, the rank of the group $\widetilde{\mathit{HFK}}(\widetilde{\mathcal D}, 3)$ in the top Alexander grading $A_1=g(\widetilde{K}) = 3$ must be greater than the rank of the group $\widetilde{\mathit{HFK}}(\mathcal D, 1)$ in the top Alexander grading $A_1=g(K)=1$. 

First, let us compute the rank of the group $\widetilde{\mathit{HFK}}(\widetilde{\mathcal D}, 3)$. Recall that the knot Floer homology of a connect sum of knots in the three-sphere obeys a Kunneth formula \cite[Theorem 7.1]{MR2065507}. Therefore the top-dimensional knot Floer homology of $\widetilde{K}$ is the tensor product of the top-dimensional knot Floer homologies of the trefoil and the Kinoshita-Terasaka knot. The knot Floer homology of the latter was first computed in the extremal gradings in \cite[Theorem 1.1]{MR2058681}, and later in full by Baldwin and Gillam \cite[Section 4]{MR2925428}. The fact from these computations that we will need is that $\operatorname{rk}\big(\widehat{\mathit{HFK}}(S^3,11\mathrm{n}42, 2)\big) = 2$. Therefore since $\operatorname{rk}\big(\widehat{\mathit{HFK}}(S^3,3_1, 1)\big) = 1$, we conclude that $\operatorname{rk}\big(\widehat{\mathit{HFK}}(S^3,\linebreak\widetilde{K}, 3)\big)=2$.
%
%\begin{align*}
%\operatorname{rk}\left(\widehat{\mathit{HFK}}(S^3,\widetilde{K}, 3)\right)
%&= \operatorname{rk}\left(\widehat{\mathit{HFK}}(S^3,11\mathrm{n}42, 2)\otimes \widehat{\mathit{HFK}}(S^3, 3_1, 1) \right) \\
%&=(2 \cdot 1)\\
%&=2
%\end{align*}

However, Hedden \cite[Proposition 7.1]{MR2372849} has computed the knot Floer homology of Whitehead doubles. Relevantly, from his work we know that $\operatorname{rk}\big(\widehat{\mathit{HFK}}(S^3, D(4_1),1)\big) =4$. Therefore there cannot be a spectral sequence sending $\widetilde{\mathit{HFK}}(\widetilde{\mathcal D}, 3)$ to $\widetilde{\mathit{HFK}}(\mathcal D, 1)$, implying that $\widetilde{K}$ cannot be two-periodic with quotient knot $K$.

\section{Spectral sequences for Lagrangian Floer cohomology} \label{Floer Cohomology Section}

Floer cohomology is an invariant for Lagrangian submanifolds in a symplectic manifold introduced by Floer \cite{MR965228, MR933228, MR948771}. In this section we briefly recall the setting and statement of Seidel and Smith's localization theorem for Floer cohomology. For further detail, we refer the reader to the longer exposition in \cite[Section 2]{Hendricks}.

Let $M$ be a manifold equipped with an exact symplectic form $\omega = d\theta$ and a compatible almost complex structure $J$ which is convex at infinity. Let $L_0$ and $L_1$ be two exact Lagrangian submanifolds of $M$. For our purposes we can restrict to the case that $L_0$ and $L_1$ are compact and intersect transversely. Let $\mathit{CF}(L_0,L_1) = \mathbb Z_2\langle L_0 \cap L_1 \rangle$ be the Floer cohomology chain group with differential $\partial$, such that $\mathit{HF}(L_0,L_1) = H_*(\mathit{CF}(L_0,L_1), \partial)$ is the Lagrangian Floer cohomology of $L_0$ and $L_1$ in $M$.

Now, suppose that $M$ carries a symplectic involution $\tau$ preserving $(M,\linebreak L_0, L_1)$ and the forms $\omega$ and $\theta$. Let the submanifold of $M$ fixed by $\tau$ be $M^{\operatorname{inv}}$, and similarly for $L_i^{\operatorname{inv}}$ for $i=0,1$. The Floer chain complex $\mathit{CF}(L_0, L_1)$ carries an induced involution $\tau_{\#}$ which takes $x \in L_0 \cap L_1$ to the intersection point $\tau(x) \in L_0 \cap L_1$. This map $\tau_{\#}$ is not a chain map with respect to a generic family of complex structures on $M$.  However, suppose that we are in the nice case that we can find a family of complex structures ${\bf J}$ on $M$ such that $\tau_{\#}$ commutes with the differential on $\mathit{CF}(L_0, L_1)$. Then $1 + \tau_{\#}$ is a second differential on $\mathit{CF}(L_0,L_1)$, and we can use the double complex below to define the Borel (or equivariant) cohomology of $(M, L_0, L_1)$ with respect to this involution.
$$
\xymatrix{
0 \rightarrow \mathit{CF}(L_0, L_1) \ar[r]^-{1 + \tau_{\#}} & \mathit{CF}(L_0, L_1) \ar[r]^-{1 + \tau_{\#}} & \mathit{CF}(L_0, L_1)\cdots
}
$$
\begin{definition}

If $\mathit{CF}(L_0, L_1)$ is the Floer chain complex and $\tau_{\#}$ is a chain map with respect to the complex structure on $M$, $\mathit{HF}_{\operatorname{Borel}}(L_0, L_1)$ is the homology of the complex $\mathit{CF}(L_0,L_1) \otimes \mathbb Z_2[[\theta]]$ with respect to the differential $\partial + (1 + \tau_{\#})\theta$. 

\end{definition}

As in the original, the choice of $\mathbb Z_2[[\theta]]$ instead of $\mathbb Z_2[\theta]$ is largely irrelevant since only finitely many powers of $\theta$ appear in each degree, but was chosen by Seidel and Smith to agree with more general contexts \cite[Section 2]{MR2739000}.

Let us now set up notation for Seidel and Smith's main definition and theorem. Consider the normal bundle $N(M^{\operatorname{inv}})$ to $M^{\operatorname{inv}}$ in $M$ and its Lagran\-gian subbundles $N(L_i^{\operatorname{inv}})$ the normal bundles to each $L_i^{\operatorname{inv}}$ in $L_i$. We pull back the bundle $N(M^{\operatorname{inv}})$ along the projection map $M^{\operatorname{inv}} \times [0,1] \rightarrow M^{\operatorname{inv}}$. Call this pullback $\Upsilon(M^{\operatorname{inv}})$. This bundle is constant with respect to the interval $[0,1]$. Its restriction to each $M^{\operatorname{inv}} \times \{t\}$ is a copy of $N(M^{\operatorname{inv}})$ which will occasionally, by a slight abuse of notation, be called $N(M^{\operatorname{inv}}) \times \{t\}$; similarly, for $i=0,1$ the copy of $N(L_i^{\operatorname{inv}})$ above $L_i^{\operatorname{inv}} \times \{t\}$ will be referred to as $N(L_i^{\operatorname{inv}}) \times \{t\}$.

We make a note here of the correspondence between our notation and Seidel and Smith's original usage. Our bundle $\Upsilon(M^{\operatorname{inv}})$ is their $TM^{\operatorname{anti}}$; while our $N(L_0^{\operatorname{inv}}) \times \{0\}$ is their $TL_0^{\operatorname{inv}}$ and our $N(L_1^{\operatorname{inv}}) \times \{1\}$ is their $TL_1^{\operatorname{anti}}$. (The name $TL_1^{\operatorname{anti}}$ is also used for the bundle that we denote $N(L_1^{\operatorname{inv}}) \times \{0\}$, using the obvious isomorphism between the bundles.)

\begin{definition} \label{stablenormaltriv} \cite[Defn 18]{MR2739000} A stable normal trivialization of the vector bundle $\Upsilon(M^{\operatorname{inv}})$ over $M^{\operatorname{inv}} \times [0,1]$ consists of the following data.

\begin{itemize}
\item A stable trivialization of unitary vector bundles $\phi \co \Upsilon(M^{\operatorname{inv}}) \oplus \mathbb C^{K} \rightarrow \mathbb C^{k_{\operatorname{anti}} + K}$ for some $K$.

\item A Lagrangian subbundle $\Lambda_0 \subset (\Upsilon(M^{\operatorname{inv}}))|_{[0,1] \times L^{\operatorname{inv}}_0}$ such that $\Lambda_0|_{\{0\} \times L^{\operatorname{inv}}_0}\linebreak = (N(L_0^{\operatorname{inv}})\times \{0\})\oplus \mathbb R^K$ and $\phi(\Lambda_0|_{\{1\} \times L_0^{\operatorname{inv}}}) = \mathbb R^{k_{\operatorname{anti}} + K}$.

\item A Lagrangian subbundle $\Lambda_1 \subset (\Upsilon(M^{\operatorname{inv}}))|_{[0,1] \times L^{\operatorname{inv}}_1}$ such that $\Lambda_1|_{\{0\} \times L^{\operatorname{inv}}_1}\linebreak = (N(L_1^{\operatorname{inv}})\times\{0\})\oplus \mathbb R^K$ and $\phi(\Lambda_1|_{\{1\} \times L_1^{\operatorname{inv}}}) = i\mathbb R^{k_{\operatorname{anti}} + K}$.
\end{itemize}

\end{definition}

The crucial theorem of \cite{MR2739000}, proved through extensive geometric analysis and comparison with the Morse theoretic case, is as follows.

\begin{theorem} \label{Localization} \cite[Thm 20]{MR2739000} If $\Upsilon(M^{\mathrm{inv}})$ carries a stable normal trivialization, then after an equivariant exact Lagrangian isotopy which replaces $L_0$ and $L_1$ with $\widetilde{L}_0$ and $\widetilde{L}_1$ and fixes the invariant sets,  $\mathit{HF}_{\mathrm{borel}}(\widetilde{L}_0,\widetilde{L}_1)$ is well-defined and there are localization maps
\[
\Delta^{(m)} \co \mathit{HF}_{\mathrm{borel}}(\widetilde{L}_0,\widetilde{L}_1) \rightarrow \mathit{HF}(L_0^{\mathrm{inv}}, L_1^{\mathrm{inv}})[[\theta]]
\]
defined for $m\gg 0$ and satisfying $\Delta^{(m+1)} = \theta\Delta^{(m)}$. Moreover, after tensoring over $\mathbb Z_2[[\theta]]$ with $\mathbb Z_2((\theta))$ these maps are isomorphisms. 

\end{theorem}

Let us say a few words about the appearance of the isotopy carrying $L_i$ to $\tilde{L}_i$ in Theorem \ref{Localization}, and explain how Theorem \ref{Localization} implies Theorem \ref{SeidelSmith}. One of the uses of the stable normal trivialization condition in the proof of Theorem \ref{Localization} is to construct both this isotopy and a family of complex structures ${\bf J}$ on $M$ with respect to which $\mathit{HF}_{\operatorname{Borel}}(\widetilde{L}_0, \widetilde{L}_1)$ is well-defined. After applying the isotopy, we replace $(\mathit{CF}(L_0,L_1), \partial)$ with a chain homotopy equivalent complex $(\mathit{CF}(\widetilde{L}_0,\widetilde{L}_1), \partial')$ and $\tau_{\#}$ with some possibly \textit{not} chain homotopy equivalent chain map $\tau_{\#}'$ which induced by the action of $\tau$ on the generators of $\mathit{CF}(\widetilde{L}_0,\widetilde{L}_1)$. The first page of the Seidel--Smith spectral sequence is the chain complex $(\mathit{CF}(\widetilde{L}_0,\widetilde{L}_1)\otimes \mathbb Z_2[[\theta]], \partial' + (1 + \tau_{\#}')\theta)$; computing vertical differentials first gives a spectral sequence from $\mathit{HF}(L_0,L_1)\otimes \mathbb Z_2[[\theta]] = \mathit{HF}(\widetilde{L}_0,\widetilde{L}_1)\otimes \mathbb Z_2[[\theta]]$ to $\mathit{HF}_{\operatorname{Borel}}(\widetilde{L}_0,\widetilde{L}_1)$, which after tensoring with $\theta^{-1}$ becomes a spectral sequence from $\mathit{HF}(L_0,L_1)\otimes \mathbb Z_2((\theta))$ to $\mathit{HF}(L_0^{\operatorname{inv}},L_1^{\operatorname{inv}})\otimes \mathbb Z_2((\theta))$.

We can in fact dispense with the symplectic structure on $M$ and work on the level of the complex normal bundle $\Upsilon(M^{\operatorname{inv}})$ with its totally real subbundles $NL_0^{\operatorname{inv}} \times \{1\}$ and $J(NL_1^{\operatorname{inv}} \times \{1\})$.  The following lemma is mentioned in \cite[Section 3d]{MR2739000}; a detailed proof is laid out in \cite[Proposition 7.1]{Hendricks}.

\begin{lemma} \label{Nullhomotopy Lemma}

The existence of a stable normal trivialization of $(M, L_0, L_1)$ is implied by the existence of a nullhomotopy of the map
\begin{align*}
(M, L_0, L_1) \rightarrow (BU, BO)
\end{align*}
\noindent which classifies the complex normal bundle $\Upsilon(M^{\operatorname{inv}}) = NM^{\operatorname{inv}} \times [0,1]$ and its totally real subbundles $NL_0^{\operatorname{inv}} \times \{0\}$ over $L_0^{\operatorname{inv}} \times \{0\}$ and $J(NL_1^{\operatorname{inv}}) \times \{1\}$ over $L_1^{\operatorname{inv}} \times \{1\}$.

\end{lemma}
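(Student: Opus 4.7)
The plan is to extract each piece of a stable normal trivialization directly from the nullhomotopy, treated as a homotopy of classifying maps of pairs. Set $n = k_{\text{anti}}$, the complex rank of $N(M^{\text{inv}})$. Since $M^{\text{inv}}$ has the homotopy type of a CW complex of finite type, I may work inside the finite-dimensional stages $BU(n+K)$ and $BO(n+K)$ for some $K$ sufficiently large, viewing the nullhomotopy as a map of pairs
\[
H:\bigl(M^{\text{inv}} \times I,\ (L_0^{\text{inv}} \sqcup L_1^{\text{inv}}) \times I\bigr) \to \bigl(BU(n+K),\ BO(n+K)\bigr)
\]
which at $s=0$ classifies the stabilized triple $(N(M^{\text{inv}}) \oplus \mathbb{C}^K,\ N(L_0^{\text{inv}}) \oplus \mathbb{R}^K,\ J(N(L_1^{\text{inv}})) \oplus \mathbb{R}^K)$ and at $s=1$ is the constant map to a basepoint in $BO(n+K) \subset BU(n+K)$.

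Pulling back the tautological bundle along $H$ yields a complex bundle $\widetilde{E}$ over $M^{\text{inv}} \times I$ which restricts to $N(M^{\text{inv}}) \oplus \mathbb{C}^K$ at $s=0$ and to $\mathbb{C}^{n+K}$ at $s=1$. Since $M^{\text{inv}} \times I$ deformation retracts onto $M^{\text{inv}} \times \{1\}$, the bundle $\widetilde{E}$ is trivial, and comparing its values at the two endpoints delivers the stable trivialization $\phi: N(M^{\text{inv}}) \oplus \mathbb{C}^K \to \mathbb{C}^{n+K}$. The same pullback construction applied to $H|_{L_0^{\text{inv}} \times I}$, which takes values in $BO(n+K)$, supplies a totally real subbundle of $\widetilde{E}|_{L_0^{\text{inv}} \times I}$ interpolating between $N(L_0^{\text{inv}}) \oplus \mathbb{R}^K$ at $s=0$ and $\phi^{-1}(\mathbb{R}^{n+K})$ at $s=1$. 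Relabeling the homotopy parameter $s$ as the interval parameter $t \in [0,1]$ appearing in $\Upsilon(M^{\text{inv}}) = N(M^{\text{inv}}) \times [0,1]$, this is exactly a subbundle $\Lambda_0$ satisfying the two endpoint conditions of Definition \ref{stablenormaltriv}.

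The construction of $\Lambda_1$ is similar but requires an extra adjustment, since the nullhomotopy over $L_1^{\text{inv}}$ classifies $J(N(L_1^{\text{inv}}))$ rather than $N(L_1^{\text{inv}})$ and ends at $\phi^{-1}(\mathbb{R}^{n+K})$ rather than the required $\phi^{-1}(i\mathbb{R}^{n+K})$. The argument above first supplies a family $\Lambda_1'$ of totally real subbundles from $J(N(L_1^{\text{inv}})) \oplus \mathbb{R}^K$ to $\phi^{-1}(\mathbb{R}^{n+K})$; multiplying fiberwise by $-J$ converts $\Lambda_1'$ into a totally real family from $N(L_1^{\text{inv}}) \oplus i\mathbb{R}^K$ to $\phi^{-1}(i\mathbb{R}^{n+K})$, using that $-i\mathbb{R} = i\mathbb{R}$ and that $\phi$ is complex linear. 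To repair the starting endpoint, I would prepend a short initial segment on which $N(L_1^{\text{inv}})$ is held fixed while the trivial summand is rotated from $\mathbb{R}^K$ to $i\mathbb{R}^K$ by multiplication by $e^{i\pi s/2}$, and concatenate to obtain the required $\Lambda_1$. Each of $\Lambda_0, \Lambda_1$ is totally real of real rank equal to the complex rank of the ambient bundle, and hence Lagrangian with respect to the symplectic form that $N(M^{\text{inv}})$ inherits as the symplectic complement of $TM^{\text{inv}}$ in $TM|_{M^{\text{inv}}}$. The main technical point will be smoothing the homotopy-theoretic data so that the resulting trivialization and subbundles are smooth rather than merely continuous, which is routine by standard approximation arguments once all the combinatorial bookkeeping of the stabilization factor $K$ is in place.
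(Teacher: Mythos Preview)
The paper does not give its own proof of this lemma; it cites Seidel--Smith \cite[Section 3d]{MR2739000} and refers the reader to \cite[Proposition 7.1]{Hendricks} for a detailed argument. Your proposal is a reasonable direct construction in the same spirit as those references: pull back the tautological data along the nullhomotopy, read off $\phi$ from the homotopy of the complex bundle, read off $\Lambda_0$ from the homotopy over $L_0^{\text{inv}}$, and fix up $\Lambda_1$ by applying $-J$ and prepending a rotation of the trivial summand. The identification of the homotopy parameter with the interval coordinate in $\Upsilon(M^{\text{inv}}) = N(M^{\text{inv}}) \times [0,1]$ is exactly the right move.

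There is one genuine slip in your justification. You write that $\Lambda_0$ and $\Lambda_1$ are ``totally real of real rank equal to the complex rank of the ambient bundle, and hence Lagrangian.'' That implication is false in general: a maximal totally real subspace of a Hermitian vector space need not be Lagrangian (e.g.\ $\operatorname{span}_{\mathbb R}\{(1,0),(i,1)\} \subset \mathbb C^2$). What you actually need is either (i) to observe that the inclusion of the Lagrangian Grassmannian $U/O$ into the totally real Grassmannian $GL(\mathbb C)/GL(\mathbb R)$ is a homotopy equivalence, so your family of totally real subbundles can be homotoped, rel the Lagrangian endpoints, to a family of Lagrangian subbundles; or (ii) to arrange from the start that the nullhomotopy factors through maps classifying \emph{unitary} bundles with \emph{real} (hence Lagrangian) structure, which amounts to the same polar-decomposition retraction. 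Similarly, the $\phi$ your construction produces is a priori only complex-linear, not unitary as Definition~\ref{stablenormaltriv} requires; again polar decomposition fixes this. These are standard repairs, on the same level as the smoothing step you already flag, but the sentence as written is incorrect and should be replaced.
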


Let $\widetilde{\mathcal D} = (S^2, \widetilde{\boldsymbol \alpha}, \widetilde{\boldsymbol \beta}, \widetilde{\bf w}, \widetilde{\bf z})$ be a multipointed Heegaard diagram for $\widetilde{K}$ defined using the method of Section \ref{Periodic Knots Section}, and $\mathcal D = (S^2, {\boldsymbol \alpha}, {\boldsymbol \beta}, {\bf w}, {\bf z})$ be its quotient under the involution $\tau$.  Given $x$ a point on $D$, let $x^1,x^2$ be its two lifts to $\widetilde{\mathcal D}$ in some order.  There is a natural map
\begin{align*}
\iota \co\operatorname{Sym}^{n_1}(S^2) &\rightarrow \operatorname{Sym}^{2n_1}(S^2) \\
(x_1\cdots x_{n_1}) &\mapsto (x_1^1x_1^2\cdots x_{n_1}^1x_{n_1}^2).
\end{align*}

This map is a holomorphic embedding; for a proof, see \cite[Appendix 1]{Hendricks}.  Moreover, consider the induced involution on $\operatorname{Sym}^{2n_1}(S^2)$, which through a slight abuse of notation we will also call $\tau$.  The fixed set of $\tau$ is exactly our embedded copy of $\operatorname{Sym}^{n_1}(S^2)$; moreover, $\tau$ preserves the two tori $\mathbb T_{\widetilde{\boldsymbol \alpha}}$ and $\mathbb T_{\widetilde{\boldsymbol \beta}}$, with fixed sets $T_{\widetilde{\boldsymbol \alpha}}^{\operatorname{inv}} = \mathbb T_{\boldsymbol \alpha}$ and $T_{\widetilde{\boldsymbol \beta}}^{\operatorname{inv}} = \mathbb T_{\boldsymbol \beta}$.

Perutz has shown that for an arbitrary Heegaard diagram $D = (S, \boldsymbol \alpha, \boldsymbol \beta,\linebreak {\bf w,z})$, there is a symplectic form $\omega$ on $\operatorname{Sym}^{g+n-1}(S)$  which is compatible with the complex structure induced by a complex structure on $S$, and with respect to which the submanifolds $\mathbb T_{\alpha}$ and $\mathbb T_{\beta}$ are in fact Lagrangian and the various Heegaard Floer homology theories are their Lagrangian Floer cohomologies \cite[Thm 1.2]{MR2509747}. In particular, the knot Floer homology is the Floer cohomology of these two tori in the ambient space $\operatorname{Sym}^{g+n-1}(S \backslash ({\bf z} \cup {\bf w}))$, where the removal of the basepoints accounts for the restriction that holomorphic curves not be permitted to intersect the submanifolds $V_{w_i}$ and $V_{z_j}$ of the symmetric product.

%\begin{proposition}
%There is a symplectic structure on $\operatorname{Sym}^{g-n-1}(S^2 \backslash ({\bf w} \cup {\bf z})$ with respect to which the submanifolds $\mathbb T_{\boldsymbol \alpha}$ and $\mathbb T_{\boldsymbol \beta}$ are Lagrangian and

%\[
%\widetilde{\mathit{HFL}}(D) \cong \widehat{\mathit{HFK}}((S^3,K))\otimes V_1^{\otimes (n_1-1)}\otimes\cdots\otimes V_{\ell}^{\otimes (n_{\ell}-1)} \cong \mathit{HF}(\mathbb T_{\boldsymbol \beta}, \mathbb T_{\boldsymbol \alpha}).
%\]

%\end{proposition}

%This is essentially Theorem 1.2 of \cite{MR2509747}; the proof is identical safe for trivial notation adjustments for Heegaard diagrams with multiple basepoints. From now on we will work with this method of computing knot Floer homology; in Section \ref{Geometry Section} we will show that the symplectic form produced by Perutz's construction meets the requirements of Seidel and Smith's theorem.

In order to apply Theorem \ref{Localization} to the case of doubly periodic knots, we will work with three different subspaces of $\operatorname{Sym}^{2n_1}(S^2)$ and their fixed sets under the involution $\tau$, as follows.
\begin{align*}
M_0 &= \operatorname{Sym}^{2n_1}(S^2 \backslash \widetilde{\bf w}) && 
M_0^{\operatorname{inv}} = \operatorname{Sym}^{n_1}(S^2 \backslash {\bf w}) \\
M_1 &= \operatorname{Sym}^{2n_1}(S^2 \backslash (\widetilde{\bf w} \cup ( \widetilde{\bf z}-z_0))) &&
M_1^{\operatorname{inv}} = \operatorname{Sym}^{n_1}(S^2 \backslash ({\bf w} \cup ({\bf z}-z_0))) \\
M_2 &= \operatorname{Sym}^{2n_1}(S^2 \backslash (\widetilde{\bf w} \cup \widetilde{\bf z})) &&
M_2^{\operatorname{inv}}= \operatorname{Sym}^{n_1}(S^2 \backslash ({\bf w} \cup {\bf z}))
\end{align*}

\noindent In all cases the Lagrangians and their invariant sets under the involution will be as follows.
\begin{align*}
L_0 = \mathbb T_{\widetilde{\boldsymbol \beta}}\qquad & L_0^{\operatorname {inv}} = \mathbb T_{\boldsymbol \beta} \\
L_1 = \mathbb T_{\widetilde{\boldsymbol \alpha}}\qquad & L_1^{\operatorname {inv}} = \mathbb T_{\boldsymbol \alpha}
\end{align*}

The following is immediate from the definitions.

\begin{lemma}

With respect to our choice of symplectic manifolds $M_i$ for $i=1,2,3$ and Lagrangians $L_0$ and $L_1$, we have the following Floer cohomology groups.

\noindent In $M_1$,
\begin{align*}
\mathit{HF}(L_0,L_1) &= \mathit{HF}(\mathbb T_{\widetilde{\boldsymbol \beta}}, \mathbb T_{\widetilde{\boldsymbol \alpha}}) = \widehat{\mathit{HFL}}(S^3, \widetilde{K} \cup \widetilde{U}) \otimes V^{\otimes (2n_1-1)}.\\
\mathit{HF}(L_0^{\operatorname{inv}}, L_1^{\operatorname{inv}})&=\mathit{HF}(\mathbb T_{\boldsymbol \beta}, \mathbb T_{\boldsymbol \alpha}) = \widehat{\mathit{HFL}}(S^3, K \cup U) \otimes V^{\otimes (n_1-1)}.
\end{align*}

\noindent In $M_2$,
\begin{align*}
\mathit{HF}(L_0,L_1)&=\mathit{HF}(\mathbb T_{\widetilde{\boldsymbol \beta}}, \mathbb T_{\widetilde{\boldsymbol \alpha}}) = \widehat{\mathit{HFK}}(S^3, \widetilde{K})\otimes V^{\otimes (2n_1-1)}\otimes W.\\
\mathit{HF}(L_0^{\operatorname{inv}},L_1^{\operatorname{inv}})&=\mathit{HF}(\mathbb T_{\boldsymbol \beta}, \mathbb T_{\boldsymbol \alpha}) = \widehat{\mathit{HFK}}(S^3,K) \otimes V^{\otimes (n_1-1)}\otimes W.
\end{align*}

\end{lemma}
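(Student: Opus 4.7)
The plan is to deduce all four identifications from Perutz's symplectic realization of the Heegaard Floer chain complex (stated just before the lemma), together with the definition of $\widetilde{\mathit{HFL}}$ and Proposition \ref{Ignore Component Proposition}. The key observation is that a holomorphic strip in $M_i$ can cross the divisor $V_p$ associated to a basepoint $p$ precisely when $p$ has not been removed from the Heegaard surface; which Heegaard Floer differential is being computed is therefore dictated entirely by which basepoints survive in $S^2$.

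First I would apply the extension of Perutz's construction to each pair $(M_i, M_i^{\text{inv}})$ simultaneously, endowing each with a symplectic form compatible with the natural complex structure on the symmetric product and making $\mathbb T_{\widetilde{\boldsymbol \alpha}}, \mathbb T_{\widetilde{\boldsymbol \beta}}$ (respectively $\mathbb T_{\boldsymbol \alpha}, \mathbb T_{\boldsymbol \beta}$) Lagrangian. After this, the Floer complex $\mathit{CF}(\mathbb T_{\widetilde{\boldsymbol \beta}}, \mathbb T_{\widetilde{\boldsymbol \alpha}})$ is identified with the Heegaard Floer complex generated by $\mathbb T_{\widetilde{\boldsymbol \alpha}} \cap \mathbb T_{\widetilde{\boldsymbol \beta}}$, carrying the differential that counts holomorphic strips avoiding every divisor $V_p$ with $p$ removed from the surface.

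Second I would treat the two relevant cases. When every basepoint is removed from $S^2$, the Floer differential coincides with $\partial$ and its homology is $\widetilde{\mathit{HFL}}(\widetilde{\mathcal D})$, which by definition equals $\widehat{\mathit{HFL}}(S^3, \widetilde{K} \cup U) \otimes V^{\otimes (2n_1-1)}$, since $\widetilde{K}$ carries $2n_1$ pairs of basepoints and $U$ carries only one; the downstairs version yields $\widehat{\mathit{HFL}}(S^3, K \cup U) \otimes V^{\otimes (n_1-1)}$. When only the basepoint $z_0$ (which lies on $U$) remains in $S^2$, the Floer differential agrees with $\partial_U$, and Proposition \ref{Ignore Component Proposition} identifies its homology as $\widehat{\mathit{HFL}}(S^3, \widetilde{K}) \otimes V^{\otimes (2n_1-1)} \otimes W$, which equals $\widehat{\mathit{HFK}}(S^3, \widetilde{K}) \otimes V^{\otimes (2n_1-1)} \otimes W$ because $\widetilde{K}$ is a knot; the downstairs version is $\widehat{\mathit{HFK}}(S^3, K) \otimes V^{\otimes (n_1-1)} \otimes W$.

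The substantive work is not in this lemma itself but in verifying the symplectic hypotheses that justify Perutz's construction on the complements chosen above: exactness of the symplectic form, convexity at infinity, compatibility with the complex structure on the symmetric product, and exactness of the Lagrangian tori. Section 5 is dedicated precisely to these checks; granting them, the present lemma is a bookkeeping reconciliation between basepoints retained in $S^2$ and the corresponding Heegaard Floer differential.
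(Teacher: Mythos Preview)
Your proposal is correct and matches the paper's approach: the paper simply declares the lemma ``immediate from the definitions'' and gives no further argument, and what you have written is precisely the unpacking of those definitions (Perutz's realization, the definition of $\widetilde{\mathit{HFL}}$, and Proposition~\ref{Ignore Component Proposition}). One small caution: the paper's displayed definitions of $M_1$ and $M_2$ appear to have their labels transposed relative to the lemma statement and the discussion that follows it (the surface for $M_1$ as written retains $z_0$, which by your own reasoning yields $\partial_U$ and hence $\widehat{\mathit{HFK}}$, not $\widehat{\mathit{HFL}}$); your argument is the right one, and the mismatch is a typo in the paper rather than a flaw in your reasoning.
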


Each of the triples $(M_i, L_0, L_1)$ satisfies the basic symplectic conditions of Seidel and Smith's theory, by arguments similar to \cite[Section 4]{Hendricks}. Therefore to demonstrate that that $(M_i,L_0,L_i)$ carries a stable normal trivialization, it suffices to check the existence of a nulhomotopy of the maps in Lemma~\ref{Nullhomotopy Lemma}. We will prove in Section \ref{Stable Normal Triv Section} that the map $(M_0, L_0, L_1)\rightarrow (BU,BO)$ admits a nulhomotopy; this then naturally restricts to a nulhomotopy of $(M_i, L_0, L_1) \rightarrow (M_0,L_0,L_1) \rightarrow (BU,BO)$ for $i=1,2$. In some moral sense, this is the correct level of generality: the most important feature of our punctured Heegaard diagram is that no periodic domain has nonzero index, and ${\bf w}$ (or ${\bf z}$) is the smallest set of points at which we may puncture $S^2$ and produce a diagram for which this property holds.  Moreover, $M_0^{\operatorname{inv}}$ conveniently deformation retracts onto each of $\mathbb T_{\boldsymbol \alpha}$ and $\mathbb T_{\boldsymbol \beta}$, making certain cohomology computations in Sections \ref{Geometry Section} and \ref{Stable Normal Triv Section} cleaner than they might otherwise be.

Let us pause to consider gradings in these theories. Recall that in general the involution $\tau_{\#}$ on $\widehat{\mathit{CFK}}(\widetilde{\mathcal D})$ arising from the action of $\tau$ on $\tilde{\mathcal D}$ is replaced by an involution $\tau_{\#}'$ on a chain homotopy equivalent complex $\widehat{\mathit{CFK}}(\widetilde{\mathcal D})'$.  To see that $\tau'_{\#}$ preserves Alexander (multi)gradings, consider that Alexander gradings are determined by relative $\operatorname{spin}^{\operatorname{c}}$ structures on $S^3 - \mu(\tilde{L})$, which correspond to homotopy classes of paths between $\mathbb T_{\boldsymbol \alpha}$ and $\mathbb T_{\boldsymbol \beta}$ in $\operatorname{Sym}^{2n_1}(\Sigma(S)\backslash\linebreak (\widetilde{\bf w} \cup \widetilde{\bf z}))$ \cite{MR2443092}. If the isotopy of Theorem \ref{Localization} replaces $\mathbb T_{\boldsymbol \alpha}$ and $\mathbb T_{\boldsymbol \beta}$ with $\mathbb T_{\boldsymbol \alpha}'$ and $\mathbb T_{\boldsymbol \beta}'$, then homotopy classes of paths between the original Lagrangians are in canonical bijection with homotopy classes of paths between the new Lagrangians, and therefore $\tau_{\#}'$ also preserves relative $\operatorname{spin}^{\operatorname{c}}$ structures, and therefore Alexander gradings. Moreover, within any fixed Alexander grading, $\tau_{\#}'$ must preserve the Maslov grading, because the composition of any holomorphic disk with $\tau$ is another holomorphic disk with the same Maslov index, and within any Alexander grading the Maslov grading is entirely determined by the Maslov indices of Whitney disks. In some simple cases, such as those computed in Section \ref{Examples Section}, knowing these properties of $\tau'_{\#}$ constrains the behavior of the spectral sequence very strongly.

Finally, the localization map of Theorem \ref{Localization} is entirely constructed by counting pseudoholomorphic disks of varying index and by multiplication and division by powers of $\theta$.  In particular,  on $M_1^{\operatorname{inv}}$, the localization isomorphism preserves the Alexander multigrading $(A_1,A_2)$ on the $E^{\infty}$ page, because no flowlines can pass over the missing basepoint divisors $V_{z_i}$ and $V_{w_i}$ for $0\leq i\leq n_1$, and on $M_2^{\operatorname{inv}}$, the localization isomorphism preserves the Alexander grading $A_1$ since flowlines can pass over $V_{z_0}$ and $V_{w_0}$ but no other basepoint divisors.  In general, however, we should expect that the localization isomorphisms will not preserve the data of the Maslov grading.

\section{The geometry of the symmetric product} \label{Geometry Section}

Over the next two sections, we will prove that the triple $(M_0^{\operatorname{inv}}, L_0^{\operatorname{inv}}, L_1^{\operatorname{inv}}) $ satisfies the the complex conditions of Lemma \ref{Nullhomotopy Lemma}, and therefore has a stable normal trivialization. The first thing to do is describe the homotopy type and cohomology of $M_0^{\operatorname{inv}}$.

We claim $M_0^{\operatorname{inv}}$ deformation retracts onto each of $\mathbb T_{\boldsymbol \alpha}$ and $\mathbb T_{\boldsymbol \beta}$.  To check this, we refer to a lemma whose proof is outlined in \cite[Lemma 5.1]{Hendricks} following an argument of Ong \cite{MR1993792}.

\begin{lemma}

The $r$th symmetric product of a wedge of $k$ circles $\vee_{i=1}^k S^1_i$ deformation retracts onto the $r$-skeleton of the $k$ torus $\prod_{i=1}^k S^1_i$, where each circle is given a CW structure consisting of the wedge point and a single one-cell, and the torus has the natural product CW structure.

\end{lemma}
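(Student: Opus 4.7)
The plan is to build an explicit deformation retraction from the group structure on each circle. Write $X = \bigvee_{i=1}^k S^1_i$ and parametrize each $S^1_i$ as $\mathbb{R}/\mathbb{Z}$ with the wedge point $*$ identified with the identity element $0$.

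First I would construct a candidate retraction $\mu : \text{Sym}^r(X) \to T^k$: send a multiset $\mathbf{q} = \{q_1, \ldots, q_r\}$ to the $k$-tuple whose $i$-th coordinate is the group sum (in $S^1_i$) of those $q_j$ that lie on $S^1_i \setminus \{*\}$, and $*$ if no such $q_j$ exists. This is continuous because a point crossing through the wedge contributes $0$ to both the old and new coordinate sums. Since at most $r$ of the $k$ coordinates can be non-basepoint, $\mu$ lands in the $r$-skeleton $T^k_r$. Complementary to $\mu$, define the inclusion $\iota : T^k_r \to \text{Sym}^r(X)$ by sending $(p_1,\ldots,p_k)$ with non-basepoint indices $S \subset \{1,\ldots,k\}$, $|S| \leq r$, to the multiset $\{p_i : i \in S\}$ padded by $r - |S|$ copies of $*$. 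A direct check gives $\mu \circ \iota = \text{id}_{T^k_r}$.

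The essential step is producing a homotopy $H : \text{Sym}^r(X) \times [0,1] \to \text{Sym}^r(X)$ from the identity to $\iota \circ \mu$ which is stationary on $\iota(T^k_r)$. The approach is to act circle-by-circle: the sub-multiset $\mathbf{q}_i \subset \mathbf{q}$ lying on $S^1_i \setminus \{*\}$, of size $m_i$, should be deformed within $\text{Sym}^{m_i}(S^1_i)$ to the configuration $\{\mu(\mathbf{q})_i\} \cup \{*, \ldots, *\}$ via a canonical sub-homotopy $h_i$ that fixes configurations already of this form (and in particular is the identity when $m_i \leq 1$). A natural way to define $h_i$ is to pass to the universal cover $\mathbb{R}$ of $S^1_i$, choose lifts of the points in a short interval around $0$, linearly interpolate in $\mathbb{R}^{m_i}$ from the tuple of lifts to a target tuple with all but one coordinate equal to zero and the remaining coordinate equal to the sum, then project back to $S^1_i$.

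The main obstacle is showing that the circle-by-circle sub-homotopies $h_i$ assemble into a continuous homotopy on all of $\text{Sym}^r(X)$. This requires careful behavior at the locus where a point $q_j$ crosses the wedge, because the decomposition $\mathbf{q} \mapsto (\mathbf{q}_i)_i$ is itself discontinuous across such crossings: as $q_j$ leaves $S^1_i$ through $*$ and enters $S^1_{i'}$, the sizes $m_i$ and $m_{i'}$ jump, and the interpolations must match up at the boundary. Making this rigorous requires either a careful choice of the target tuple and of lifts (so that the interpolation on $\mathbf{q}_i$ limits to the interpolation on $\mathbf{q}_i \setminus \{q_j\}$ as $q_j \to *$), or equivalently, an induction on $r$ using the cofibration $\text{Sym}^{r-1}(X) \hookrightarrow \text{Sym}^r(X)$. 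A more combinatorial alternative is to give $\text{Sym}^r(X)$ the CW structure inherited from the cellular structure on $X$ (one $0$-cell, $k$ $1$-cells): the cells of $\text{Sym}^r(X)$ are then indexed by compositions $(m_0, m_1, \ldots, m_k)$ with $\sum m_i = r$, the subcomplex $\iota(T^k_r)$ consists of cells with each $m_i \leq 1$, and the remaining cells (with some $m_i \geq 2$) are removed by a sequence of elementary collapses, each pairing a cell with a free face obtained by moving one repeated point through the wedge.
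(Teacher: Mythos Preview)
The paper does not actually prove this lemma: it only states it and refers the reader to \cite[Lemma 5.1]{Hendricks}, which in turn follows an argument of Ong \cite{MR1993792}. So there is no in-paper proof to compare your proposal against; your attempt stands or falls on its own.

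Your maps $\mu$ and $\iota$ are correct and well-defined, and the verification $\mu\circ\iota = \mathrm{id}$ is immediate. The substance of the lemma is entirely in the construction of the deformation retraction $H$, and here you have identified the difficulty correctly but not resolved it. Your first suggestion---lifting the points on $S^1_i$ to $\mathbb R$ and linearly interpolating---does not work as written. Any choice of lift $S^1 \setminus \{p\} \to \mathbb R$ is discontinuous at some point $p$; if you center the lift at the wedge point $*$ (so the lift is continuous there, as your boundary-matching argument needs), then the discontinuity sits at the antipode of $*$, and your homotopy fails to be continuous for configurations that contain a point crossing that antipode. There is also no $S_{m_i}$-invariant continuous way to single out ``the coordinate that receives the sum,'' so the target tuple is not canonically ordered. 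These are not merely matters of bookkeeping: the sum map $\mathrm{Sym}^{m}(S^1)\to S^1$ realizes $\mathrm{Sym}^{m}(S^1)$ as a disk bundle over $S^1$, and the deformation onto a section must be built from the bundle structure rather than from a global lift.

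Your third route---giving $\mathrm{Sym}^r(X)$ a cell structure indexed by the partition $(m_0,m_1,\ldots,m_k)$ and collapsing cells with some $m_i\geq 2$---is the one that most closely matches the argument in the cited references and is the one most likely to succeed. But as you have written it, it is only a sketch: you would need to verify that the strata $\prod_i \mathrm{Sym}^{m_i}((0,1))$ genuinely assemble into a regular CW structure (this uses that $\mathrm{Sym}^m([0,1])\cong\Delta^m$), and then exhibit an explicit sequence of elementary collapses, checking the free-face condition at each step. None of your three alternatives is carried through, so the proposal as it stands is an outline with a real gap at the decisive step.
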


We will apply this observation to $M_0^{\operatorname{inv}}$. In $S^2$, let $\nu_{i}\co[0,1] \rightarrow S^2$ be a small closed curve around $z_i$ for $0 \leq i \leq n_1$, such that $\nu_i$ is oriented counterclockwise in the complement of $w_{0}$. Then
\begin{align*}
H_1(S^2 \backslash {\bf z}) &= \mathbb Z\langle \nu_0, \nu_1,\ldots,\nu_{n_1} \rangle.
\end{align*}
Now $S^2\backslash \{{\bf w}\}$ deformation retracts onto a wedge of $n_1$ circles $\vee_{i=1}^{n_1} \nu'_i$, where $\nu_i'$ is a closed curve homotopic to $\nu_i$ which passes once through the origin.  Therefore $M_0^{\operatorname{inv}} =\operatorname{Sym}^{n_1}(S^2 \backslash {\bf w})$ deformation retracts onto the symmetric product of $\vee_{i=1}^{n_1} \nu'_i$, which in turn deformation retracts onto the product $\prod_{i=1}^{n_1} \nu'_i$.  However, this product is homotopy equivalent to $\prod_{i=1}^{n_1} \nu_i$, and indeed to $\prod_{i=1}^{n_1} \alpha_i$ and to $\prod_{i=1}^n \beta_i$.  We conclude that $M_0^{\operatorname{inv}}$ has the homotopy type of an $n_1$-torus and in particular admits a deformation retraction onto each of $\mathbb T_{\boldsymbol \alpha}$ and $\mathbb T_{\boldsymbol \beta}$.

We will require a concrete description of the cohomology rings of $M_0^{\operatorname{inv}}$, $\mathbb T_{\boldsymbol \alpha}$, and $\mathbb T_{\boldsymbol \beta}$ for the computations of Section \ref{Stable Normal Triv Section}, so we supply one now. Consider the one cycles
\begin{align*}
\overline{\nu_i}\co [0,1] &\rightarrow M_0^{\operatorname{inv}} \\
t &\mapsto (\nu_i(t)x_0\cdots x_0)
\end{align*}
\noindent where $x_0$ is any choice of basepoint. The $[\overline{\nu_i}]$ form a basis for $H_1(M_0^{\operatorname{inv}})$.  Ergo, letting $[\overline{\nu_i}]^*$ denote the dual of $\overline{\nu_i}$, we have
\begin{align*}
H^1(M_0^{\operatorname{inv}}) &= \mathbb Z \langle [\overline{\nu_0}]^*,\ldots,[\overline{\nu_{n_1}}]^* \rangle \\
H^k(M_0^{\operatorname{inv}}) &= \textstyle{\bigwedge^k}H^1(M_0^{\operatorname{inv}})
\end{align*}

Similarly, we can write down the homology of the tori $\mathbb T_{\boldsymbol \alpha}$ and $\mathbb T_{\boldsymbol \beta}$. Through a slight abuse of notation, let us insist that we have parametrizations $\alpha_i \co [0,1] \rightarrow S^2$ running counterclockwise in $S^2 \backslash \{w_0\}$ and $\beta_i$ running clockwise. The first homology of $\mathbb T_{\boldsymbol \alpha}$ is generated by one-cycles
\begin{align*}
\overline{\alpha_i}\co [0,1] &\rightarrow \alpha_1 \times \cdots \times \alpha_{n_1} \\
					t &\mapsto (y_1,\ldots,y_{i-1}, \alpha_i(t),y_{i+1},\ldots,y_{n_1}) \\					
\end{align*}
\noindent where $y_j = \alpha_j(0)$, and thus the cohomology of this torus is
\begin{align*}
H^1(\mathbb T_{\boldsymbol \alpha}) &= \mathbb Z\langle  [\overline {\alpha_1}]^*,\ldots,[\overline{ \alpha_{n_1}}]^* \rangle \\
H^k(\mathbb T_{\boldsymbol \alpha}) &= \textstyle{\bigwedge^k}H^1(\mathbb T_{\boldsymbol \alpha})
\end{align*}
\noindent We apply analagous naming conventions to $\mathbb T_{\boldsymbol \beta}$, obtaining
\begin{align*}
H^1(\mathbb T_{\boldsymbol \beta}) &= \mathbb Z\langle  [\overline {\beta_1}]^*,\ldots,[\overline {\beta_{n_1}}]^* \rangle \\
H^k(\mathbb T_{\boldsymbol \beta}) &= \textstyle{\bigwedge^k}H^1(\mathbb T_{\boldsymbol \beta})
\end{align*}
Let $X \subset M_0^{\operatorname{inv}}\times [0,1]$ be $(L_0^{\operatorname{inv}} \times \{0\}) \cup (L_1^{\operatorname{inv}} \times \{1\})$. Observe that under the map on homology induced by inclusion $\iota \co X \hookrightarrow M_0^{\operatorname{inv}} \times[0,1]$, both $[\overline{\alpha_i}]$ and of $[\overline{\beta_i}]$ are sent to $[\overline{\nu_i}]$. Therefore the map $\iota^*$ on cohomology $H^k(M_0^{\operatorname{inv}} \times [0,1]) \rightarrow H^k(X)$ is precisely the diagonal map, with
\[
\textstyle{\bigwedge_{j=1}^m}  [\overline{\nu_{i_j}}]^* \mapsto \textstyle{\bigwedge_{j=1}^m}  [\overline{\alpha_{i_j}}]^* + \textstyle{\bigwedge_{j=1}^m}  [\overline{\beta_{i_j}}]^*
\]

\begin{corollary}

The relative cohomology $H^*(M_0^{\operatorname{inv}} \times [0,1], X)$ is the cohomology of the torus $(S^1)^{n_1}$, and in particular is torsion-free.

\end{corollary}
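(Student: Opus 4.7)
The plan is to read the corollary off the long exact sequence in cohomology of the pair $(M_0^{\text{inv}} \times [0,1], X)$, using the explicit formula for the restriction map $\iota^*$ that has just been computed. First I would identify the two terms I already know: since $[0,1]$ is contractible, projection gives
\[
H^k(M_0^{\text{inv}} \times [0,1]) \;\cong\; H^k(M_0^{\text{inv}}) \;\cong\; \textstyle\bigwedge^k \mathbb Z^{n_1},
\]
and since $X = \mathbb T_{\boldsymbol \beta} \sqcup \mathbb T_{\boldsymbol \alpha}$ is a disjoint union of two $n_1$-tori,
\[
H^k(X) \;\cong\; \textstyle\bigwedge^k \mathbb Z^{n_1} \oplus \textstyle\bigwedge^k \mathbb Z^{n_1}.
\]
Under the preferred bases $\{\widehat{[\overline{\nu_I}]}\}$, $\{\widehat{[\overline{\alpha_I}]}\}$, $\{\widehat{[\overline{\beta_I}]}\}$ described in the excerpt, the formula displayed immediately above the corollary statement says exactly that $\iota^*$ is the diagonal map $\Delta_k \colon \bigwedge^k \mathbb Z^{n_1} \to \bigwedge^k \mathbb Z^{n_1} \oplus \bigwedge^k \mathbb Z^{n_1}$, $v \mapsto (v,v)$.

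The key observation is then that $\Delta_k$ is injective for every $k$, so the long exact sequence of the pair breaks into short exact pieces
\[
0 \to \mathrm{coker}(\Delta_{k-1}) \to H^k(M_0^{\text{inv}} \times [0,1], X) \to \ker(\Delta_k) \to 0
\]
with $\ker(\Delta_k) = 0$. The cokernel of the diagonal $A \to A \oplus A$ is canonically isomorphic to $A$ via $(a,b)\mapsto a-b$, so one reads off
\[
H^k(M_0^{\text{inv}} \times [0,1], X) \;\cong\; \textstyle\bigwedge^{k-1} \mathbb Z^{n_1} \;\cong\; H^{k-1}(T^{n_1}).
\]
This identifies the relative cohomology with the cohomology of the $n_1$-torus (up to a degree shift by one) and exhibits every term as a finitely generated free abelian group, which is exactly the torsion-freeness needed for the Chern-class arguments in Section \ref{Stable Normal Triv Section}.

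The only step that requires any care is verifying that $\iota^*$ truly is the diagonal map on each $\bigwedge^k$, but this is just the formula stated above the corollary, itself a consequence of the observation that both deformation retractions $M_0^{\text{inv}} \to \mathbb T_{\boldsymbol \alpha}$ and $M_0^{\text{inv}} \to \mathbb T_{\boldsymbol \beta}$ carry the generator $\widehat{[\overline{\nu_i}]}$ to $\widehat{[\overline{\alpha_i}]}$ and to $\widehat{[\overline{\beta_i}]}$ respectively, together with the ring structure on $H^*$ of a torus. There is no substantive obstacle; the proof is little more than bookkeeping with the long exact sequence, and I expect to present it in essentially one short paragraph.
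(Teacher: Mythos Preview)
Your proposal is correct and follows essentially the same route as the paper: both use the long exact sequence of the pair, observe that $\iota^*$ is the diagonal map (hence injective in every degree), and read off $H^k(M_0^{\text{inv}}\times[0,1],X)\cong\mathrm{coker}(\Delta_{k-1})\cong\bigwedge^{k-1}\mathbb Z^{n_1}$. The paper's write-up is a touch terser and treats the case $m=1$ separately, but the argument is the same.
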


\begin{proof}

Consider the long exact sequence
\begin{equation*}
\xymatrix{
\cdots H^{m-1}(X) \ar[r] & H^m(M_0^{\operatorname{inv}} \times[0,1], X)  \ar[r]^-{q^*} & H^m(M_0^{\operatorname{inv}}) \ar[r]^-{\iota^*} &  H^m(X)\cdots
}
\end{equation*}

Taking into account that the obvious isomorphism between $H^*(\mathbb T_{\boldsymbol \alpha})$ and $H^*(\mathbb T_{\boldsymbol \beta})$ respects our labelling of the cohomology classes, $\iota^*$ is the diagonal map, and in particular an injection. When $m\geq 2$ we have the following short exact sequences.
\begin{align*}
0\!&\xymatrix@R =.1 cm{
 \ar[r]& H^{m-1}(M_0^{\operatorname{inv}} \times [0,1]) \ar[r]^-{\iota^*} &H^{m-1}(X)}\\
&\hspace{11.5em}\xymatrix@R =.1 cm{\ar[r] & H^m(M_0^{\operatorname{inv}} \times [0,1], X) \ar[r] & 0
}
\end{align*}

Here the map $i^*$ sends the wedge  $\textstyle{\bigwedge_{j=1}^{m-1}} [\overline{\nu_{i_j}}]^*$ to the sum $\textstyle{\bigwedge_{j=1}^{m-1}} [\overline{\alpha_{i_j}}]^* + \textstyle{\bigwedge_{j=1}^{m-1}} [\overline{\beta_{i_j}}]^*$. We therefore see that for $m \geq 2$, $H^m(M_0^{\operatorname{inv}} \times [0,1], X) \cong\linebreak H^{m-1}(\mathbb T_{\boldsymbol \alpha}) \cong H^{m-1}(T_{\boldsymbol \beta})$.  (We will have occasion to be careful about the generators in Section \ref{Stable Normal Triv Section}.) The same result follows for $H^1(M_0^{\operatorname{inv}} \times [0,1], X)$ trivially.
\end{proof}

Let us consider the implications of this result for the relative $K$-theory of $(M_0^{\operatorname{inv}} \times [0,1], X)$, which is isomorphic to the reduced $K$-theory $\widetilde{K}((M_0^{\operatorname{inv}} \times [0,1])/ X)$.  (For a review of $K$-theory, see \cite[Chapter 2]{HatcherKTheory}; all of the facts used in this paper are also summarized in \cite[Section 6]{Hendricks}.) Notice that $(M_0^{\operatorname{inv}} \times [0,1], X)$ deformation retracts onto the compact CW pair $(\operatorname{Sym}^{n_1}(S^2\, \backslash \linebreak \bigcup_i \nu(w_i)) \times [0,1], X)$, where $\nu(w_i)$ is a small open neighborhood.  This deformation retraction makes it legitimate to consider the reduced $K$-theory of the pair by identifying it with $\widetilde{K}((\operatorname{Sym}^{n_1}(S^2 \backslash \bigcup_i \nu(w_i)) \times [0,1])/ X)$.  From now on we will apply this trick without mention.

Recall that there is a rational ring isomorphism
\begin{align*}
\widetilde{\operatorname{ch}} \co (\widetilde{K}^0(X) \oplus \widetilde{K}^1(X))\otimes \mathbb Q \rightarrow \widetilde{H}^*(X; \mathbb Q)
\end{align*}
\noindent between the rational reduced $K$-theory and rational reduced cohomology of any space $X$ chosen such that if $V$ is a line bundle over $X$ and $c_1(V)$ is the first Chern class of $X$, $\widetilde{\operatorname{ch}}(V) = \sum_{i=1}^{\infty} \frac{c_1(V)^i}{i!}$ and $\widetilde{\operatorname{ch}}$ is a ring homomorphism.  Using this isomorphism and the Atiyah-Hirzebruch spectral sequence, Atiyah and Hirzebruch have shown that if the reduced cohomology $\widetilde{H}^*(X)$ is torsion-free, the reduced $K$-theory $\widetilde{K}^*(X)$ is as well, and the stable isomorphism class of a vector bundle is determined entirely by its Chern classes \cite[Section 2.5]{MR0139181}.  In particular, since $H^*(M_0^{\operatorname{inv}} \times [0,1], X) = \widetilde{H}^*((M_0^{\operatorname{inv}} \times [0,1])/\linebreak X)$ is torsion-free, the stable isomorphism class of a complex vector bundle over $(M_0^{\operatorname{inv}}, X)$ --- that is, a bundle whose restriction to $X$ is stably trivial --- is entirely determined by its Chern classes.  To show such a bundle is stably trivial, it suffices to show that all its Chern classes are zero.

\section{Stable normal triviality of the normal bundle} \label{Stable Normal Triv Section}

Having remarked that the symplectic geometry conditions of Seidel and Smith's theory are satisfied for $(M_i, L_0, L_1)$, when $i=0,1,2$, we now proceed to check that $(M_0, L_0, L_1)$ fulfills the complex conditions that, by Lemma~\ref{Nullhomotopy Lemma}, imply the existence of a stable normal trivialization.

\begin{proposition} \label{Stable Normal Trivialization}

Consider the complex manifold $M_0=\operatorname{Sym}^{2n_1}(S^2 \backslash \widetilde{\bf w})$ to\-gether with its totally real submanifolds $L_0 = \mathbb T_{\boldsymbol \beta}$ and $L_1= \mathbb T_{\boldsymbol \alpha}$, and the holomorphic involution $\tau$ which preverves $L_0$ and $L_1$.  The map
\begin{align*}
(M_0^{\operatorname{inv}} \times [0,1], (L_0 \times \{0\}) \cup (L_1 \times \{1\})) \rightarrow (BU, BO)
\end{align*}
\noindent which classifies the pullback $\Upsilon(M_0^{\operatorname{inv}}) = N(M_0^{\operatorname{inv}}) \times [0,1]$ of the complex normal bundle of $M_0^{\operatorname{inv}}$ together with the totally real subbundles $NL_0^{\operatorname{inv}} \times\{0\}$ over $L_0^{\operatorname{inv}} \times \{0\}$ and $J(NL_1^{\operatorname{inv}}) \times \{1\}$ over $L_1^{\operatorname{inv}} \times \{1\}$ is nulhomotopic.

\end{proposition}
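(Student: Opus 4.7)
The plan is to exploit the K-theoretic criterion established at the end of Section \ref{Geometry Section}: because $H^*(M_0^{\text{inv}} \times [0,1], X; \mathbb Z)$ is torsion-free (with $X = (L_0^{\text{inv}} \times \{0\}) \sqcup (L_1^{\text{inv}} \times \{1\})$), the Chern character argument implies that a complex vector bundle over the pair $(M_0^{\text{inv}} \times [0,1], X)$ whose restriction to $X$ is stably trivialized is itself stably trivial iff all its relative Chern classes vanish. By Lemma \ref{Nullhomotopy Lemma}, this will yield the nullhomotopy once we verify (a) that $\Upsilon(M_0^{\text{inv}})$ carries a canonical stable trivialization on $X$ compatible with the totally real subbundles, and (b) that the resulting relative Chern classes all vanish.

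For (a), I will pin down the normal bundle explicitly. A point of $M_0^{\text{inv}}$ away from the diagonal corresponds to a $\tau$-invariant divisor $\{x_1^1, x_1^2, \ldots, x_{n_1}^1, x_{n_1}^2\}$ with $\tau(x_i^1) = x_i^2$ and no $x_i^j$ a branch point, so $TM_0|_{M_0^{\text{inv}}}$ splits as $\bigoplus_i (T_{x_i^1}(S^2 \setminus \widetilde{\mathbf w}) \oplus T_{x_i^2}(S^2 \setminus \widetilde{\mathbf w}))$ with $\tau$ swapping each pair. Both eigenspaces of the swap are canonically identified with $\bigoplus_i T_{x_i}(S^2 \setminus \mathbf w)$, producing a complex isomorphism $N(M_0^{\text{inv}}) \cong TM_0^{\text{inv}}$. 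The same calculation applied at $L_i \supset L_i^{\text{inv}}$ gives $N(L_i^{\text{inv}}) \cong TL_i^{\text{inv}}$, realized as the real locus of $TM_0^{\text{inv}}|_{L_i^{\text{inv}}} = TL_i^{\text{inv}} \otimes_\mathbb{R} \mathbb{C}$. Since both $L_0^{\text{inv}} = \mathbb T_{\boldsymbol \beta}$ and $L_1^{\text{inv}} = \mathbb T_{\boldsymbol \alpha}$ are $n_1$-tori and hence parallelizable, any choice of parallelization complexifies to a trivialization of $\Upsilon(M_0^{\text{inv}})|_X$ respecting the totally real subbundles.

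For (b), the absolute computation is immediate: $M_0^{\text{inv}}$ deformation retracts onto the parallelizable torus $\mathbb T_{\boldsymbol \alpha}$, and under this retraction $TM_0^{\text{inv}}$ pulls back from $T\mathbb T_{\boldsymbol \alpha} \otimes_\mathbb{R} \mathbb{C}$, which is trivial, so every absolute Chern class in $H^{2k}(M_0^{\text{inv}} \times [0,1]; \mathbb Z)$ vanishes. In the long exact sequence of the pair, each relative Chern class $\tilde c_k$ maps to the corresponding absolute class under $q^*$, and thus lies in the image of $\delta: H^{2k-1}(X) \to H^{2k}(M_0^{\text{inv}} \times [0,1], X)$. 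By the short exact sequence established at the end of Section \ref{Geometry Section}, this image is the cokernel of the diagonal injection $\iota^*: H^{2k-1}(M_0^{\text{inv}}) \hookrightarrow H^{2k-1}(L_0^{\text{inv}}) \oplus H^{2k-1}(L_1^{\text{inv}})$, and a lift $\delta^{-1}(\tilde c_k)$ records the discrepancy between the restrictions of a global stable trivialization of $TM_0^{\text{inv}}$ and the boundary trivializations induced by parallelizations of the Lagrangian tori.

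The main obstacle is the final step: arranging the parallelizations of $T\mathbb T_{\boldsymbol \alpha}$ and $T\mathbb T_{\boldsymbol \beta}$ to descend from a single global stable trivialization of $TM_0^{\text{inv}}$ on $M_0^{\text{inv}}$, which forces the cokernel representative, and hence $\tilde c_k$, to be zero. The plan is to exploit the explicit fiberwise model $TM_0^{\text{inv}} = \bigoplus_i T_{x_i}(S^2 \setminus \mathbf w)$ together with the deformation retractions $M_0^{\text{inv}} \to \mathbb T_{\boldsymbol \alpha}$ and $M_0^{\text{inv}} \to \mathbb T_{\boldsymbol \beta}$ exhibited in Section \ref{Geometry Section}. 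Since both retractions identify the generators $[\overline{\alpha_i}]$ and $[\overline{\beta_i}]$ with the same class $[\overline{\nu_i}]$ on $M_0^{\text{inv}}$, a fiberwise parallelization of $TS^2$ along the circle $\nu_i'$ determines compatible parallelizations of $T\mathbb T_{\boldsymbol \alpha}$ and $T\mathbb T_{\boldsymbol \beta}$ simultaneously; with these matched choices the representative in the cokernel vanishes, every $\tilde c_k$ is zero, and the classifying map $(M_0^{\text{inv}} \times [0,1], X) \to (BU, BO)$ is nullhomotopic.
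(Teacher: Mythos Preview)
Your approach differs substantially from the paper's, and there is a genuine gap in the final step.

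The paper does not attempt to identify $N(M_0^{\text{inv}})$ with $TM_0^{\text{inv}}$.  Instead, it builds explicit relative cycles: for each index set ${\bf I}=(i_1,\dots,i_k)$ it constructs a $(k{+}1)$-chain $W_{\bf I}\subset M_0^{\text{inv}}\times[0,1]$ out of pairwise disjoint annuli $Y_i\subset (S^2\setminus\{\mathbf w\})\times[0,1]$ with $\partial Y_i=\beta_i\times\{0\}-\alpha_i\times\{1\}$ and projection the periodic domain $P_i$.  The $[W_{\bf I}]$ generate $H_*(M_0^{\text{inv}}\times[0,1],X)$, and the restriction of $\Upsilon(M_0^{\text{inv}})_{\text{rel}}$ to each $W_{\bf I}$ decomposes as a box sum of the line bundles $\Upsilon(Y_i)_{\text{rel}}$.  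The vanishing of $c_1(\Upsilon(Y_i)_{\text{rel}})$ is then deduced from the equality $\mu(P_i)=\mu(\pi^{-1}(P_i))=0$ together with the identification, recalled in Section~\ref{Heegaard Floer Background Section}, of the Maslov index of a periodic class with twice the relative first Chern class.  So the entire argument rests on the explicit computation that these periodic domains have Maslov index zero.

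Your final paragraph is exactly where this input is needed, and you have not supplied it.  Knowing that $[\overline{\alpha_i}]$ and $[\overline{\beta_i}]$ both map to $[\overline{\nu_i}]$ in $H_1(M_0^{\text{inv}})$ only says the two boundary circles are homologous; it does not force the complexified real trivializations of $T\mathbb T_{\boldsymbol\alpha}$ and $T\mathbb T_{\boldsymbol\beta}$ to agree with the restriction of any global trivialization of $TM_0^{\text{inv}}$.  The discrepancy between them, evaluated on the annulus $Y_i$, is precisely (half) the Maslov index of the periodic domain $P_i$, and your freedom to vary the real parallelizations of the tori only changes the relative $c_1$ by classes in the image of $\pi_1(GL_{n_1}(\mathbb R))\to\pi_1(GL_{n_1}(\mathbb C))$, i.e.\ by even integers on each $[Y_i]$.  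You cannot kill an odd obstruction this way, so ``matched choices'' alone do not force $\tilde c_k=0$; you must actually verify the Maslov vanishing, which is what the paper does.

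A secondary issue: your eigenspace identification $N(M_0^{\text{inv}})\cong TM_0^{\text{inv}}$ is only argued off the fat diagonal, but the embedding $\iota$ hits the diagonal at any tuple containing the branch point $z_0$ (which lies in $S^2\setminus\{\mathbf w\}$).  The local model there is genuinely different, and your sentence ``both eigenspaces of the swap are canonically identified with $\bigoplus_i T_{x_i}(S^2\setminus\mathbf w)$'' requires a choice of lift that becomes singular at $z_0$.  The paper sidesteps this entirely by using the nullhomotopy of $S^2\setminus\{\mathbf w\}\hookrightarrow S^2$ to conclude stable triviality of $N(M_0^{\text{inv}})$ without ever naming the bundle.
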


As a first step, we must establish the complex triviality of $NM_0^{\operatorname{inv}}$ (and thus of $\Upsilon(M_0^{\operatorname{inv}}$)) and the real triviality of $NL_i^{\operatorname{inv}}$ for $i=0,1$.

\begin{lemma}

The complex bundle $NM_0^{\operatorname{inv}}$ is stably trivial.

\end{lemma}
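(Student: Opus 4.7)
The plan is to identify $NM_0^{\text{inv}}$ with $TM_0^{\text{inv}}$ as complex vector bundles, and then show that $TM_0^{\text{inv}}$ is trivial by restricting to the deformation retract $\mathbb{T}_{\boldsymbol{\alpha}}$.

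First I would analyze $\tau_*$ along the fixed set. At a generic point $\iota(x_1,\ldots,x_{n_1}) = (x_1^1, x_1^2, \ldots, x_{n_1}^1, x_{n_1}^2) \in M_0^{\text{inv}}$ whose underlying $2n_1$ points are distinct and avoid the branch point $z_0$, the tangent space to $M_0$ is $\bigoplus_{i=1}^{n_1}\bigl(T_{x_i^1}S^2 \oplus T_{x_i^2}S^2\bigr)$, and $\tau_*$ exchanges the two summands in each pair via $d\tau$. The $(+1)$-eigenspace $\{(v_i, d\tau(v_i))_i\}$ is canonically identified as a complex subspace with $T_{(x_1,\ldots,x_{n_1})}M_0^{\text{inv}}$, while the $(-1)$-eigenspace $\{(v_i, -d\tau(v_i))_i\}$ is complex-linearly isomorphic to the same space via $(v_i)_i \mapsto (v_i, -d\tau(v_i))_i$, since multiplication by $-1$ commutes with the complex structure. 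These pointwise identifications glue into a complex bundle isomorphism $NM_0^{\text{inv}} \cong TM_0^{\text{inv}}$ on the complement of the branch locus, extending to all of $M_0^{\text{inv}}$ by analyticity.

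Second, because $\mathbb{T}_{\boldsymbol{\alpha}}$ is a totally real submanifold of maximal dimension in the complex manifold $M_0^{\text{inv}}$, its complex tangent bundle restricts to the complexification $TM_0^{\text{inv}}|_{\mathbb{T}_{\boldsymbol{\alpha}}} \cong T\mathbb{T}_{\boldsymbol{\alpha}} \otimes_{\mathbb{R}} \mathbb{C}$. Since $\mathbb{T}_{\boldsymbol{\alpha}}$ is a torus and hence parallelizable, this complexification is trivial, so $TM_0^{\text{inv}}|_{\mathbb{T}_{\boldsymbol{\alpha}}}$ is a trivial complex bundle. The deformation retraction $M_0^{\text{inv}} \simeq \mathbb{T}_{\boldsymbol{\alpha}}$ established in the preceding section then implies that $TM_0^{\text{inv}}$, being a pullback of a trivial bundle under the retraction up to isomorphism, is itself trivial. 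Combined with the identification from the first step, this shows $NM_0^{\text{inv}}$ is trivial, and in particular stably trivial.

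The main obstacle will be verifying that the pointwise eigenspace identifications assemble into a bona fide complex bundle isomorphism near the large diagonal of $M_0$ and near the branch point $z_0$, where the product description of the tangent space degenerates. If this local analysis proves cumbersome, the cleaner fallback is the observation recorded at the end of the previous section: since $H^{*}(M_0^{\text{inv}})$ is torsion-free, stable triviality is detected by Chern classes via the Atiyah--Hirzebruch spectral sequence, and the Chern classes of $NM_0^{\text{inv}}$ vanish because the pointwise identification above already works on the generic open dense subset containing $\mathbb{T}_{\boldsymbol{\alpha}}$, onto which $M_0^{\text{inv}}$ deformation retracts.
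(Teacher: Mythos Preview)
Your fallback argument is correct and suffices: over $\mathbb{T}_{\boldsymbol{\alpha}}$ the image points are distinct and avoid the branch locus, so the eigenspace identification $NM_0^{\text{inv}}|_{\mathbb{T}_{\boldsymbol{\alpha}}}\cong TM_0^{\text{inv}}|_{\mathbb{T}_{\boldsymbol{\alpha}}}\cong T\mathbb{T}_{\boldsymbol{\alpha}}\otimes\mathbb{C}$ is a genuine bundle isomorphism, and the deformation retraction together with torsion-freeness of $H^*(M_0^{\text{inv}})$ then kills all Chern classes. Your primary route, however, is not complete as stated: the pointwise map $(v_i,d\tau(v_i))\mapsto(v_i,-d\tau(v_i))$ does not obviously extend over the locus where some $x_i=z_0$ (in the local model $v\mapsto(0,-v)$ in elementary symmetric coordinates $(e_1,e_2)$, the generic identification degenerates as one approaches $v=0$), and in general the tangent and normal bundles to a fixed set of a holomorphic involution need not be isomorphic. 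So you were right to flag that obstacle and retreat to the Chern class argument.

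The paper takes a shorter and rather different route. It observes that the embedding $\iota\colon M_0^{\text{inv}}\hookrightarrow M_0$ extends to an embedding $\mathrm{Sym}^{n_1}(S^2)\hookrightarrow\mathrm{Sym}^{2n_1}(S^2)$, so $NM_0^{\text{inv}}$ is the pullback of the normal bundle of that larger embedding along the inclusion $\mathrm{Sym}^{n_1}(S^2\setminus\{\mathbf{w}\})\hookrightarrow\mathrm{Sym}^{n_1}(S^2)$. Since $S^2\setminus\{\mathbf{w}\}\hookrightarrow S^2$ factors through the contractible $S^2\setminus\{w_0\}$, this inclusion is nullhomotopic, and any bundle pulled back along a nullhomotopic map is trivial. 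This avoids both the eigenspace analysis near the branch point and any appeal to the deformation retraction or Chern classes, at the cost of using the extra ambient space $\mathrm{Sym}^{n_1}(S^2)$. Your approach is more intrinsic to the involution and reuses machinery (the retraction onto $\mathbb{T}_{\boldsymbol{\alpha}}$, the Atiyah--Hirzebruch argument) that the paper needs later anyway; the paper's approach is a quicker standalone proof.
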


\begin{proof}

The inclusion map $\iota_1 \co (S^2 \backslash {\bf w}) \hookrightarrow S^2$ is nulhomotopic.  Therefore the induced inclusion $\operatorname{Sym}^{n_1}(\iota_1)\co \operatorname{Sym}^{n_1}(S^2 \backslash 
{\bf w}) \hookrightarrow  \operatorname{Sym}^{n_1}(S^2)$ is also nulhomotopic. The normal bundle of $M_0^{\operatorname{inv}} = \operatorname{Sym}^{n_1}(S^2 \backslash {\bf w})$ in $M_0 = \operatorname{Sym}^{2n_1}(S^2 \backslash \widetilde{\bf w})$ is exactly the restriction of the normal bundle 
to $\operatorname{Sym}^{n_1}(S^2)$ in $\operatorname{Sym}^{2n_1}(S^2)$ along the inclusion map $\operatorname{Sym}^{n_1}(\iota_1)$.  As the map $\operatorname{Sym}^{n_1}(\iota_1)$ is nulhomotopic, $NM_0^{\operatorname{inv}}$ is stably trivial.\end{proof}

The proof of the next lemma proceeds exactly as in \cite[Lemma 7.3]{Hendricks}.

\begin{lemma} \label{Torus Triviality Lemma}
The normal bundles of $\mathbb T_{\boldsymbol \alpha} \subset \mathbb T_{\widetilde{\boldsymbol \alpha}}$ and $\mathbb T_{\boldsymbol \beta} \subset \mathbb T_{\widetilde{\boldsymbol \beta}}$ are trivial.
\end{lemma}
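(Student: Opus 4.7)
The plan is to exploit the product structure of the tori and reduce the problem to computing the normal bundle of a single circle in a two-torus. First I would observe that each curve $\alpha_i$ avoids the branch points $z_0, w_0$ of the cover $\pi: \widetilde{\mathcal D} \to \mathcal D$ and bounds a disk in $S^2 \backslash \{z_0, w_0\}$ (the one containing the basepoints $w_i$ and $z_i$). Hence $\alpha_i$ is null-homotopic in $S^2 \backslash \{z_0, w_0\}$, so its preimage under the double branched cover consists of two disjoint circles $\alpha_i^1 \sqcup \alpha_i^2$, interchanged by the deck involution $\tau$.

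Next I would verify that the restriction of $\iota: \text{Sym}^{n_1}(S^2) \hookrightarrow \text{Sym}^{2n_1}(S^2)$ to $\mathbb T_{\boldsymbol \alpha}$ lifts, through the quotient maps from the Cartesian products, to the product map
\[
\alpha_1 \times \cdots \times \alpha_{n_1} \longrightarrow (\alpha_1^1 \times \alpha_1^2) \times \cdots \times (\alpha_{n_1}^1 \times \alpha_{n_1}^2),
\]
whose $i$th factor is the graph embedding $\alpha_i \hookrightarrow \alpha_i^1 \times \alpha_i^2$ sending $x$ to $(\tilde x, \tau(\tilde x))$ for any choice of lift $\tilde x$ of $x$. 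Disjointness of the $\alpha_i^a$ curves ensures that the quotient maps $(S^2)^{n_1} \to \text{Sym}^{n_1}(S^2)$ and $(S^2)^{2n_1} \to \text{Sym}^{2n_1}(S^2)$ are local diffeomorphisms near these tori, so this factorization does compute the normal bundle in the ambient symmetric product sense as well.

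Then I would show that each factor has trivial normal line bundle: the graph of a diffeomorphism $S^1 \to S^1$ inside $S^1 \times S^1$ has trivial normal bundle, since for instance the vector field orthogonal to the graph's tangent direction with respect to the flat product metric is a nowhere-vanishing normal section. Consequently, the normal bundle of $\mathbb T_{\boldsymbol \alpha}$ in $\mathbb T_{\widetilde{\boldsymbol \alpha}}$ is the external direct sum of $n_1$ trivial line bundles, hence trivial. The argument for $\mathbb T_{\boldsymbol \beta} \subset \mathbb T_{\widetilde{\boldsymbol \beta}}$ is identical with $\beta$-curves replacing $\alpha$-curves throughout, using that the $\beta_i$ also bound disks missing $z_0, w_0$.

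The main bookkeeping step is confirming that the inclusion really does factor as an external product of graph embeddings; this reduces to the disjointness of the two lifts of each curve, which is built into our specific construction of $\widetilde{\mathcal D}$ from $\mathcal D$. No deeper machinery is required beyond this combinatorial observation, which is why this lemma can be quoted essentially verbatim from \cite[Lemma 7.3]{Hendricks}.
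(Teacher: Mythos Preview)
Your proposal is correct and follows exactly the approach the paper intends: the paper gives no self-contained proof but refers to \cite[Lemma 7.3]{Hendricks}, whose argument is precisely the product-of-graph-embeddings reduction you outline. The only content is the observation that the $\alpha_i$ miss the branch locus so that each lifts to two disjoint circles, making the embedding $\mathbb T_{\boldsymbol \alpha} \hookrightarrow \mathbb T_{\widetilde{\boldsymbol \alpha}}$ a product of diagonal (graph) embeddings $\alpha_i \hookrightarrow \alpha_i^1 \times \alpha_i^2$, each with trivial normal line bundle.
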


We now turn to the question of relative triviality.  Let $X = (L_0 \times \{0\}) \cup (L_1 \times \{1\}))$ as in Section \ref{Geometry Section}.  Choose preferred trivializations of the totally real bundles  $NL_0^{\operatorname{inv}} \times \{0\}$ and $J(NL_1^{\operatorname{inv}}) \times \{1\}$ and tensor with $\mathbb C$ to extend to a preferred trivialization of the complex bundle $\Upsilon(M_0^{\operatorname{inv}})|_{X}$. We use this trivialization to pull back $[\Upsilon(M_0^{\operatorname{inv}})] \in \widetilde{K}^0(M_0^{\operatorname{inv}} \times [0,1])$ to a relative bundle $[\Upsilon(M_0^{\operatorname{inv}})]_{\operatorname{rel}} \in \tilde{K}^0((M_0^{\operatorname{inv}} \times [0,1])/X)$. Because the reduced cohomology, and therefore the reduced $K$-theory, of $(M_0^{\operatorname{inv}}\times [0,1])/X$ have no torsion, to verify triviality of the relative bundle, it suffices to show that the Chern classes of $[\Upsilon(M_0^{\operatorname{inv}})]_{\operatorname{rel}}$ are trivial. 

\begin{remark} 
It may be helpful to draw attention to a minor problem of notation: in earlier sections, $\widetilde{K}$ is a doubly periodic knot, but also in the present section $\widetilde{K}^0(B)$ is the reduced $K$-theory of the topological space $B$.  We hope this will not occasion confusion.
\end{remark}

Fundamentally, the argument for relative triviality rests on the fact that each of the $n_1$ linearly independent periodic domains in $S^2 \backslash {\bf w}$ has Maslov index zero. Therefore, we pause here to recall the notation of Section \ref{Periodic Knots Section}. Let $x_i$ be the single positive intersection point in $\alpha_i \cap \beta_i$ and $y_i$ the negative intersection point.  Let $F_i$ be the closure of the component of $S - \alpha_i - \beta_i$ containing $z_{i}$ and $E_i$ be the closure of the component of $S - \alpha_i -\beta_i$ containing $z_{i+1}$ (or $z_1$ if $i=n_1$).  Then $P_i = E_i - F_i$ is a periodic domain of index zero on $\mathcal D$ with boundary $\beta_i - \alpha_i$.  Finally, let $\gamma_i$ be the union of the arc of $\alpha_i$ running from $x_i$ to $y_i$ and the arc of $\beta_i$ running from $x_i$ to $y_i$.  In particular, this specifies that $\gamma_i$ has no intersection with any $\alpha$ or $\beta$ curves other than $\alpha_i$ and $\beta_i$, and moreover the component of $S - \gamma_i$ which does not contain $w_0$ contains only a single basepoint $w_i$. See Figure \ref{Periodic Domains Figure} for an illustration of the domain $P_i$.

\begin{figure}
\centering
\includegraphics{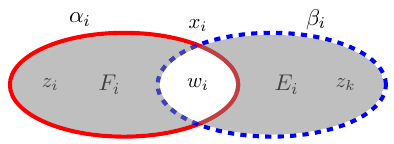}
\caption{The periodic domain $P_i = E_i - F_i$ has Maslov index zero.}
\label{Periodic Domains Figure}
\end{figure}

The structure of the argument is as follows: for each even $k$ such that $1 \leq k \leq 2n_1$, we will use the periodic domains $P_i$ to produce a set of $k$-chains $\{W_{\bf I}\}$ in $(M_0^{\operatorname{inv}} \times [0,1], X)$ whose relative homology classes generate the $k$th homology of $H_k(M_0^{\operatorname{inv}} \times[0,1], X)$.  We will then show that the restriction of $\Upsilon(M^{\operatorname{inv}}_0)|_{\operatorname{rel}}$ to each $W_{\bf I}$ is trivial as a relative vector bundle, and that therefore $\langle c_k(\Upsilon(M_0^{\operatorname{inv}})|_{\operatorname{rel}}), [W_{\bf I}] \rangle=0$.  Since the $[W_{\bf I}]$ generate $H_k(M_0^{\operatorname{inv}}\times[0,1], X)$, we will have proved that $c_k(\Upsilon(M_0^{\operatorname{inv}})|_{\operatorname{rel}})$ is identically zero.

More specifically, we will describe the chains $W_{\bf I}$ as a subset of a product of two-chains $Y_i$ in $(S^2 \backslash {\bf w}) \times [0,1]$ such that the projection to $S^2 \backslash {\bf w}$ is the periodic domain $P_i$ and the $Y_i$ are pairwise disjoint. We will then show that the restriction of the bundle $\Upsilon(M^{\operatorname{inv}}_0)|_{\operatorname{rel}}$ to $W_{\bf I}$ also breaks up as the restriction of a product of relative bundles $\Upsilon(Y_i)$ over the $Y_i$ which are known to be trivial through Maslov index arguments.  Let us begin by constructing these manifolds $Y_i$.

For $1 \leq i \leq n_1$, let $Y_i$ be a subspace of $S^2 \times [0,1]$ with the following properties:

\begin{itemize}

\item $Y_i$ is topologically $S^1 \times [0,1]$, and $Y_i \cap (S^2 \times \{t\})$ is $S^1$ for all $t$.

\item The boundary of $Y_i$ is $\beta_i \times \{0\} - \alpha_i \times \{1\}$.

\item The projection of $Y_i$ to the punctured sphere is a copy of the periodic domain $P_i$.

\item $Y_i \cap Y_j = \emptyset$ if $i \neq j$.

\item If $x_i$ is the point of positive intersection of $\alpha_i$ and $\beta_i$, $\{x_i\} \times [0,1] \subset Y_i$.

\end{itemize}

The 2-chains $Y_i$ are constructed as follows: for $t \in [0, \frac{1}{2}]$, choose a linear homotopy $H_t$ from $\beta_i$ to $\gamma_i$ inside the domain $E_i$ which fixes $\beta_i \cap \gamma_i$. Let the intersection of $Y_i$ with $S^2 \times \{t\}$ be the embedded circle $H_t(\beta_i) \times \{t\}$. Similarly, for $t \in [\frac{1}{2}, 1]$, choose a linear homotopy $J_t$ from $\gamma_i$ to $\alpha_i$ inside the domain $F_i$ which fixes $\alpha_i \cap \gamma_i$, and let the intersection of $Y_i$ with $S^2 \times \{t\}$ be $J_t(\gamma_i)$.

Observe that this description of $Y_i$ has the following properties: first, $Y_i$ is contained in $(S^2 \backslash {\bf w}) \times [0,1]$ as promised.  Second, $Y_i$ contains the line segment $\{x_i\} \times [0,1]$.  Third, the intersection of $Y_i$ with $(\alpha_i \cup \beta_i)\times (0,1)$ is entirely contained in the cylinder $\gamma_i \times (0,1)$.  Finally, the projection of $Y_i \cap (S^2 \times [0, \frac{1}{2}])$ to $S^2$ lies entirely ``inside'' $\beta_i$ - that is, on the component of $S^2 \backslash \beta_i$ not containing $w_0$ - implying that the sets $Y_i \cap (S^2 \times [0, \frac{1}{2}])$ are pairwise disjoint.  Similarly, the projection of $Y_i \cap (S^2 \times [\frac{1}{2},1])$ to $S^2$ lies entirely ``inside'' $\alpha_i$, and therefore the sets  $Y_i \cap (S^2 \times [\frac{1}{2},1])$ are pairwise disjoint.  Ergo the $Y_i$ are pairwise disjoint.

We are now ready to define the complex line bundles $\Upsilon(Y_i)$.  Recall from Section \ref{Floer Cohomology Section} that there is a holomorphic embedding
\begin{align*}
\iota \co M^{\operatorname{inv}}_0 = \operatorname{Sym}^{n_1}(S^2 \backslash {\bf w}) &\rightarrow \operatorname{Sym}^{n_1}(S^2 \backslash \widetilde{\bf w}) = M_0 \\
(x_1\cdots x_{n_1}) &\mapsto (x_1^1x_1^2\cdots x_{n_1}^1 x_{n_1}^2).
\end{align*}
\noindent Similarly, we have a holomorphic embedding
\begin{align*}
\iota' \co (S^2 \backslash {\bf w}) &\rightarrow \operatorname{Sym}^2(S^2 \backslash \widetilde{\bf w}) \\
x &\mapsto (x^1x^2)
\end{align*}
which takes a point $x$ on $S^2 \backslash {\bf w}$ to the unordered pair consisting of its two (not necessarily distinct) lifts on $S^2 \backslash \widetilde{\bf w}$ under the projection map $\pi \co (S^2 \backslash \widetilde{\bf w})\linebreak \rightarrow (S^2 \backslash {\bf w})$. Notice that for each $1 \leq i \leq n_1$, $\iota'(\alpha_i) \subset \alpha_i^1 \times \alpha_i^2$ and $\iota'(\beta_i) \subset \beta_i^1 \times \beta_i^2$.  Let $N(S^2 \backslash {\bf w})$ be the normal bundle to $S^2 \backslash {\bf w}$ in the second symmetric product $\operatorname{Sym}^2(S^2 \backslash \widetilde{\bf w})$.  This is a complex line bundle over a punctured sphere, hence trivial.  For each $1 \leq i \leq n_1$, let $N\alpha_i$ be the real normal bundle to $\alpha_i$ in $\alpha_i^1 \times \alpha_i^2$ and $N\beta_i$ be the real normal bundle to $\beta_i$ in $\beta_i^1 \times \beta_i^2$.  An argument similar to that of Lemma \ref{Torus Triviality Lemma} shows that $N\alpha_i$ and $N\beta_i$ are trivial real line bundles for all $i$.

Consider the pullback of the normal bundle $N(S^2 \backslash {\bf w})$ to $(S^2 \backslash {\bf w}) \times [0,1]$. The complex line bundle over $Y_i$ that interests us is the restriction of this pullback to $Y_i$, which we shall by analogy denote $\Upsilon(Y_i)$.  That is, $\Upsilon(Y_i) = (N(S^2 \backslash {\bf w}) \times [0,1])|_{Y_i}$. This complex line bundle has totally real subbundles $J(N\alpha_i \times \{1\})$ over $\alpha_i \times \{1\}$ and $N\beta_i \times \{0\}$ over $\beta_i \times  \{0\}$. Choose preferred real trivializations of these real line bundles, and extend to a complex trivialization of $\Upsilon(Y_i)|_{(\beta_i \times \{0\}) \cup (\alpha_i \times \{1\})}$ by tensoring with $\mathbb C$. For convenience, let this subspace $(\beta_i \times \{0\}) \cup (\alpha_i \times \{1\})$ be $X_i$.

\begin{lemma}

For $1 \leq i \leq n_1$, given any preferred trivialization of $\Upsilon(Y_i)|_{X_i}$ as a complex vector bundle, the relative vector bundle $\Upsilon(Y_i)|_{\operatorname{rel}}$ over $(Y_i,X_i)$ is stably trivial.

\end{lemma}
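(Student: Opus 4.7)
The plan is to identify $\Upsilon(Y_i)|_{\mathrm{rel}}$ as a relative complex line bundle classified by the Maslov index of the periodic domain $P_i$, and then invoke the observation from Section \ref{Periodic Knots Section} that $P_i$ has index zero. By construction $Y_i$ is homeomorphic to $S^1 \times [0,1]$ with $X_i = \partial Y_i$, so the quotient $Y_i/X_i$ is homotopy equivalent to $S^2$. Consequently, relative complex line bundles on $(Y_i, X_i)$ are classified by their first relative Chern class in $H^2(S^2; \mathbb{Z}) \cong \mathbb{Z}$, and vanishing of $c_1(\Upsilon(Y_i)|_{\mathrm{rel}})$ will yield outright triviality, hence stable triviality.

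Next I would invoke the discussion from Section \ref{Heegaard Floer Background Section} which shows that, for a complex vector bundle over $(S^1 \times [0,1], \partial)$ equipped with totally real boundary subbundles, the Maslov index equals twice the first relative Chern class. This reduces the problem to showing that the Maslov index of $\Upsilon(Y_i)$, with its subbundles $N\beta_i \times \{0\}$ and $J(N\alpha_i \times \{1\})$, is zero. To compute this Maslov index, I would use the fact that $\Upsilon(Y_i)$ is the pullback along the projection $Y_i \to S^2 \backslash \{\mathbf{w}\}$ of the complex normal bundle to the holomorphic embedding $\iota': S^2 \backslash \{\mathbf{w}\} \hookrightarrow \mathrm{Sym}^2(S^2 \backslash \{\widetilde{\mathbf{w}}\})$. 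Composing with $\iota'$ presents $Y_i$ as a Whitney-style cylinder in the symmetric product whose boundary circles lie on the Lagrangians $\beta_i^1 \times \beta_i^2$ and $\alpha_i^1 \times \alpha_i^2$, and whose shadow on the Heegaard surface is exactly $\pi^{-1}(P_i)$. Lipshitz's formula (Equation \ref{Maslov index formula}) then extracts the Maslov index directly from $P_i$, and since $P_i = E_i - F_i$ is a periodic domain of Maslov index zero (each of $E_i, F_i$ is a bigon of Euler measure zero, with local multiplicities of $P_i$ at the corners $x_i, y_i$ averaging to zero), the Maslov index vanishes.

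The main obstacle I anticipate is the geometric matching in the penultimate step: verifying rigorously that the intrinsic Maslov index of the abstract relative line bundle $\Upsilon(Y_i)$ coincides with the Floer-theoretic Maslov index computable from the shadow $P_i$ via Lipshitz's formula. In particular, one must ensure that the trivializations of $N\alpha_i$ and $N\beta_i$ used to complexify the boundary data correspond to the natural trivializations of the tangent bundles to $\beta_i^1 \times \beta_i^2$ and $\alpha_i^1 \times \alpha_i^2$ restricted to $\iota'(\beta_i)$ and $\iota'(\alpha_i)$, so that no correction term is incurred. Once this correspondence is pinned down, the vanishing of $c_1(\Upsilon(Y_i)|_{\mathrm{rel}})$ follows immediately, and triviality of the relative line bundle is formal.
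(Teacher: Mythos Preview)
Your overall strategy is sound and close to the paper's, but there is a genuine gap at the ``geometric matching'' step you flag. The Floer-theoretic Maslov index that Lipshitz's formula reads off from the shadow $\pi^{-1}(P_i)$ is the Maslov index of the pullback of the \emph{full tangent bundle} $T\bigl(\mathrm{Sym}^2(S^2\backslash\{\widetilde{\mathbf w}\})\bigr)$ along your cylinder, with its totally real subbundles $T(\beta_i^1\times\beta_i^2)$ and $J\bigl(T(\alpha_i^1\times\alpha_i^2)\bigr)$. Under $\iota'$ this tangent bundle splits as $T(S^2\backslash\{\mathbf w\})\oplus N(S^2\backslash\{\mathbf w\})$, and the boundary subbundles split accordingly as $T\beta_i\oplus N\beta_i$ and $J(T\alpha_i)\oplus J(N\alpha_i)$. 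So what you have computed is
\[
\mu\bigl(T(S^2\backslash\{\mathbf w\})|_{Y_i};\,T\beta_i,\,J(T\alpha_i)\bigr)\;+\;\mu\bigl(\Upsilon(Y_i);\,N\beta_i,\,J(N\alpha_i)\bigr)\;=\;0,
\]
not the second summand alone. The identification you hope for---that the Maslov index of $\Upsilon(Y_i)$ \emph{equals} the domain Maslov index---is simply not what Lipshitz's formula says; there is no reason for the tangential summand to contribute trivially without a separate argument.

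This is exactly the extra step the paper supplies. It first maps $Y_i$ into the \emph{downstairs} symmetric product $\mathrm{Sym}^{n_1}(S^2\backslash\{\mathbf w\})$, where the shadow is $P_i$ itself; since $\mu(P_i)=0$, the tangential bundle $T(S^2\backslash\{\mathbf w\})|_{Y_i}$ (after stripping trivial summands) has relative $c_1$ equal to zero. It then maps $Y_i$ into the \emph{upstairs} symmetric product $\mathrm{Sym}^{2n_1}(S^2\backslash\{\widetilde{\mathbf w}\})$, where the shadow is $\pi^{-1}(P_i)$, again of index zero; this gives relative $c_1$ of $T\oplus\Upsilon(Y_i)$ equal to zero. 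Subtracting yields $c_1\bigl(\Upsilon(Y_i)|_{\mathrm{rel}}\bigr)=0$. Your argument becomes correct once you insert this two-step subtraction (equivalently: compute the downstairs Maslov index of $P_i$ to kill the tangential contribution), but as written it conflates the normal bundle with the ambient tangent bundle and does not close.
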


\begin{proof}

Let $\iota_i$ be the inclusion of $Y_i$ into $(S^2 \backslash {\bf w})\times[0,1]$, and $p$ be the projection of $(S^2 \backslash {\bf w})\times [0,1]$ to  $S^2 \backslash {\bf w}$. Then the image of $p \circ \iota_i(Y_i)$ is the periodic domain $P_i$.

Consider the commutative diagram below. The top horizontal inclusion of $S^2 \backslash \{w\}$ into $\operatorname{Sym}^{n_1}(S^2 \backslash {\bf w})$ is defined by mapping a point $x$ to $(x x_1\cdots\widehat{x_i}\cdots x_{n_1})$, where again each $x_j$ is the positively oriented point in $\alpha_j \cup \beta_j$.  The bottom inclusion of $\operatorname{Sym}^2(S^2 \backslash \widetilde{\bf w})$ into $\operatorname{Sym}^{2n_1}(S^2 \backslash \widetilde{\bf w})$ sends an unordered pair $(xy)$ to $(xyx_i^1x_i^2\cdots \widehat{x_i^1}\widehat{x_i^2}\cdots x_{n_1}^1x_{n_1}^2)$.

\[
\xymatrix{
Y_i \ar[r]^-{p \circ \iota_i} \ar[dr] & S^2 \backslash {\bf w} \ar@{^{(}->}[d]^-{\iota'} \lhook\mkern-7mu\ar[r]& \operatorname{Sym}^{n_1}(S^2 \backslash {\bf w}) \ar@{^{(}->}[d]^-{\iota} \\
& \operatorname{Sym}^2(S^2 \backslash \widetilde{\bf w}) \lhook\mkern-7mu\ar[r]& \operatorname{Sym}^{2n_1}(S^2 \backslash \widetilde{\bf w})
}
\]

Consider the map $\phi \co Y_i \rightarrow \operatorname{Sym}^{n_1}(S^2 \backslash {\bf w})$ given by composition along the top row of the diagram.  This is a topological annulus representing the periodic domain $P_i$. Since $P_i$ has Maslov index zero, by the discussion in Section \ref{Heegaard Floer Background Section}, the first Chern class of the pullback of the complex tangent bundle $T\operatorname{Sym}^{n_1}(S^2 \backslash {\bf w})$ to $Y_i$ relative to the complexification of the pullbacks of the real tangent bundles $J(T(\mathbb T_{\boldsymbol \alpha}))$ to one component of the boundary of $Y_i$ and $T(\mathbb T_{\boldsymbol \beta})$ to the other is zero. However, the pullback $\phi^*(T\operatorname{Sym}^{n_1}(S^2 \backslash {\bf w}))$ of the tangent bundle of the total symmetric product to $Y_i$ is exactly $(p \circ \iota_1)_*(T(S^2 \backslash {\bf w}) \oplus \mathbb C^{n_1 - 1})$, where the factors of $\mathbb C$ are the restriction of the tangent bundle of the punctured sphere to the points $x_j$ such that $j \neq i$. Moreover, $(p \circ \iota_1)^*(T(S^2 \backslash {\bf w}))$ is precisely the restriction of the pullback bundle $p^*(T(S^2 \backslash {\bf w})) = T((S^2 \backslash {\bf w})\times [0,1])$ over $(S^2 \backslash {\bf w}) \times [0,1]$ to the subspace $Y_i$.  Therefore $\phi^*(T\operatorname{Sym}^{n_1}(S^2 \backslash {\bf w})) = T((S \backslash {\bf w}) \times [0,1])|_{Y_i} \oplus \mathbb C^{n_1 - 1}$.

Similarly, the pullback $\phi^*(J(T (\mathbb T_{\boldsymbol \alpha})))$ to the boundary component $\alpha_i \times \{1\} \subset Y_i$ is $(p \circ \iota_i)^*(J(T(\alpha_i)\oplus \mathbb R^{n_1-1}))$, where the factors of $\mathbb R$ are the canonical real subspace of the tangent bundle to the punctured sphere at the points $x_j$ for $j \neq i$.  The pullback of this bundle to $\alpha_i \times \{1\} \subset Y_i$ is $J((T(\alpha_i) \times \{1\}) \oplus \mathbb R^{n_1-1})$.  A similar argument shows that $\phi^*(J(T(\mathbb T_{\boldsymbol \beta})))$ is $(T(\beta_i)\times \{0\}) \oplus \mathbb R^{n_1-1}$.  Therefore we have seen that the complex vector bundle\linebreak $T((S^2 \backslash {\bf w})\times [0,1])|_{Y_i} \oplus \mathbb C^{n_1-1}$ relative to the complexification of its totally real subbundles $(T(\beta_i)\times \{0\})\oplus \mathbb R^{n_1+1}$ over $\beta_i \times \{0\}$ and $J((T(\alpha_i)\times \{1\})\oplus \mathbb R^{n_1+1})$ over $\alpha_i \times \{1\}$ has relative first Chern class zero. Hence the same is true of the complex line bundle $T((S^2 \backslash {\bf w})\times [0,1])|_{Y_i}$ relative to the complexification of its totally real subbundles $J(T(\alpha_i)\times \{1\})$ and $T(\beta_i)\times\{0\}$.  As this is a line bundle, triviality of the first relative Chern class suffices to show stable triviality of the relative vector bundle.

Now consider the map $\widetilde{\phi} = \iota \circ \phi$ from $Y_i$ to $\operatorname{Sym}^{2n_1}(S^2 \backslash \widetilde{\bf w})$.  This is a topological annulus representing the periodic domain $\pi^{-1}(P_i)$ in $\widetilde{\mathcal D}$, which also has Maslov index zero.  Therefore the pullback along $\widetilde{\phi}$ of the complex tangent bundle $T(\operatorname{Sym}^{2n_1}(S^2 \backslash \widetilde{\bf w}))$ to $Y_i$ relative to complexifications of pullbacks of the totally real subbundles $J(T(\mathbb T_{\widetilde{\boldsymbol \alpha}}))$ and $T(\mathbb T_{\widetilde{\boldsymbol \beta}})$ to $\alpha_i \times \{1\}$ and $\beta_i \times \{0\}$ has trivial relative first Chern class. However, once again the pullback of this relative bundle along the inclusion map $\operatorname{Sym}^{2}(S^2 \backslash \widetilde{\bf w}) \hookrightarrow \operatorname{Sym}^{2n_1}(S^2 \backslash \widetilde{\bf w})$ decomposes into a much smaller tangent bundle together with a trivial summand. Observe that $\widetilde{\phi}^*(T(\operatorname{Sym}^{2n_1}(S^2 \backslash \widetilde{\bf w}))$ is 
$$(\iota' \circ (p \circ \iota_i))^*(T(\operatorname{Sym}^{2}(S^2 \backslash \widetilde{\bf w})) \oplus \mathbb C^{2n_1 - 2})).$$
Pulling back along the inclusion map $\iota'$, we see that this bundle breaks up as $T(S^2 \backslash {\bf w}) \oplus N(S^2 \backslash {\bf w}) \oplus \mathbb C^{2n_1 - 2}$ over $S^2 \backslash {\bf w}$, and that therefore its ultimate pullback to $Y_i$ is $T((S^2 \backslash {\bf w}) \times [0,1]) \oplus \Upsilon(Y_i) \oplus \mathbb C^{2n_1 -2}$.

Similarly, the pullback along $\widetilde{\phi}$ of $J(T(\mathbb T_{\widetilde{\boldsymbol \alpha}}))$ to $\alpha_i \times \{1\}$ is precisely $J(T{\alpha_i} \times \{1\}) \oplus J(N{\alpha_i} \times \{1\}) \oplus J(\mathbb R^{2n_1-2})$.  Finally, the pullback along $\widetilde{\phi}$ of $T(\mathbb T_{\widetilde{\boldsymbol \beta}})$ to $\beta_i \times \{0\}$ is $(T{\beta_i} \times \{0\}) \oplus (N_{\beta_i} \times \{0\}) \oplus \mathbb R^{2n_1-2}$.  Dropping the trivial summands, we conclude that the first relative Chern class of $T((S^2 \backslash {\bf w}) \times [0,1]) \oplus \Upsilon(Y_i)$ relative to complexifications of the totally real subbundle $(T{\beta_i} \times \{0\}) \oplus (N_{\beta_i} \times \{0\})$ over $\beta_i \times \{0\}$ and the totally real subbundle\linebreak $J(T{\alpha_i} \times \{1\}) \oplus J(N{\alpha_i} \times \{1\})$ over $\alpha_i \times \{1\}$ has relative first Chern class zero.  This, combined with our previous conclusions concerning relative triviality of $T((S^2 \backslash {\bf w}) \times [0,1])$ with respect to its subbundles $T\beta_i \times \{1\}$ over $\beta_i \times \{1\}$ and $J(T\alpha_i \times \{0\})$ over $\alpha_i \times \{0\}$, implies that $\Upsilon(Y_i)$ has relative first Chern class zero with respect to complexifications of its subbundles $N\beta_i \times \{0\}$ and $J(N\alpha_1 \times \{1\})$, as promised.
\end{proof}

We are now ready to construct generators for $H_k(M_0^{\operatorname{inv}} \times [0,1], X)$, and use them to prove that the relative Chern classes of $[\Upsilon(M_0^{\operatorname{inv}})]$ are identically zero.  Recall that for $k > 1$ there are short exact sequences on homology
\begin{align*}
0\!&\xymatrix{
\ar[r] & H_{k+1}(M_0^{\operatorname{inv}} \times [0,1], X) \ar[r] & H_k(\mathbb T_{\boldsymbol \beta}) \oplus H_k(\mathbb T_{\boldsymbol \alpha})}\\
&\hspace{12.5em}\xymatrix{\ar[r]^-{\iota_*} & H_k(M_0^{\operatorname{inv}}) \ar[r] &0.
}
\end{align*}

As in Section \ref{Geometry Section}, for $k>1$, the $(k+1)\,$st homology group $H_{k+1}(M_0^{\operatorname{inv}} \times [0,1], X)$ is the kernel of the map $\iota_*$.  Let us take a closer look at generators for this group. We have seen that the kernel of $\iota_*$ is $\mathbb Z \langle \textstyle{\bigwedge_{j=1}^{k}} [\beta_{i_j}] \oplus -\textstyle{\bigwedge_{j=1}^k} [\alpha_{i_j}] : 1 \leq i_1 < \cdots <i_k \leq n_1 \rangle$. Therefore for each ${\bf I} = (i_1,\ldots,i_k)$ such that $1 \leq i_1 < \cdots < i_k \leq n_1$, there is a $(k+1)$-chain $W_{\bf I}$ in $M_0^{\operatorname{inv}} \times [0,1]$ whose boundary is $\prod_{j=1}^{k} \beta_{i_j} - \prod_{j=1}^k \alpha_{i_j}$. We will describe this chain in terms of the two-chains $Y_i$.

Construct the $k$ chain $W_{\bf I}$ as follows. Let $W_{\bf I}$ be topologically $(S^1)^k \times [0,1]$, and insist that the intersection of $W_{\bf I}$ with $M_0^{\operatorname{inv}} \times \{t\}$ is the product $\prod_{j=1}^k Y_{i_j} \cap (S^2 \backslash {\bf w}) \times \prod_{i \notin {\bf I}}\{x_i\}$. Since the intersection of each $Y_{i_j}$ with $(S^2 \backslash {\bf w}) \times \{t\}$ is a circle for each $t$, this says that the intersection of $W_{\bf I}$ with $M_0^{\operatorname{inv}}\times \{t\}$ is a $k$-torus for all $t$.

Notice that for all $t$, $W_{\bf I} \cap (M_0^{\operatorname{inv}} \times \{t\})$ is a subset of the product $\prod_{i=1}^{n} (Y_i\linebreak \cap ((S^2 \{{\bf w}\})\times [0,1])$.  (Recall here that the line segments $\{x_i\} \times [0,1]$ are subsets of $Y_i$ for each $i$.) Since the $Y_i$ are pairwise disjoint, each $W_{\bf I}\cap(M_0^{\operatorname{inv}}\times \{t\})$ is a submanifold of $M_0^{\operatorname{inv}} = \operatorname{Sym}^{n_1}(S^2 \backslash {\bf w})$ and is disjoint from the fat diagonal.  Furthermore, since $W_{\bf I}$ is homeomorphic to $(S^1)^k \times [0,1]$ and $\partial W_{\bf I}$ is 
$\prod_{j=1}^{k} \beta_{i_j} - \prod_{j=1}^k \alpha_{i_j}$, we see that the collection $\{W_{\bf I}: 1 \leq i_1 < \cdots < i_k \leq n_1\}$ generates $H^k(M_0^{\operatorname{inv}}\times [0,1], X)$.

Let us consider the restriction of $\Upsilon(M_0^{\operatorname{inv}})$ to $W_{\bf I}$ for some particular ${\bf I}$.  We pick preferred trivializations of $N(L_0) \times \{0\}) = N(\mathbb T_{\boldsymbol \beta}) \times \{0\}$ and $J(N(L_1)  \times \{1\}) = J(N(\mathbb T_{\boldsymbol \alpha}) \times \{1\})$ which are products of preferred trivializations of the real subbundles $N(\beta_i) \times \{0\}$ and $J(N(\alpha_i) \times \{1\})$ in $\Upsilon(Y_i)$.

Because $W_i \cap (M_0^{\operatorname{inv}} \times \{t\})$ lies entirely off the fat diagonal for each $t$, the restriction of $\Upsilon(M_0^{\operatorname{inv}})$ to each $W_i$ also decomposes as a product bundle.  In other words, regard $W_i$ as a subset of the abstract product $\prod_{i=1}^{n_1} Y_i$ (which is \textit{not} a submanifold of $M_0^{\operatorname{inv}} \times [0,1]$). Then $\Upsilon(M_0^{\operatorname{inv}})|_{W_{\bf I}}$ is the restriction of the product bundle $(\prod\Upsilon(Y_i))|_{W_{\bf I}}$.  Therefore, since each $\Upsilon(Y_i)$ admits a trivialization with respect to a choice of trivialization of the bundle restricted to $\alpha_i$ and $\beta_i$, we conclude that $\Upsilon(M_0^{\operatorname{inv}})|_{W_{\bf I}}$ is trivializable with respect to preferred trivializations of the bundle over $\mathbb T_{\boldsymbol \beta}\times \{0\}$ and $J(\mathbb T_{\boldsymbol \alpha} \times \{1\})$.

\begin{remark}
We could as easily have shown that $(\operatorname{Sym}^{2n_1}(S^2 \backslash {\bf z}), \mathbb T_{\widetilde{\boldsymbol \beta}}, T_{\widetilde{\boldsymbol \alpha}})$ has a stable normal trivialization. In the case studied here this would produce a spectral sequence from $\widehat{\mathit{HF}}(S^3) \otimes W^{\otimes 2n_1}$ to $\widehat{\mathit{HF}}(S^3) \otimes W^{\otimes n_1}$, which is not especially interesting.
\end{remark}

\section{Examples}\label{Examples Section}

In this section we produce some examples of the behavior of the spectral sequences of Theorems \ref{Link Floer Homology Spectral Sequence} and \ref{Knot Floer Homology Spectral Sequence}.  We present the cases of the unknot and the trefoil as doubly-periodic knots. In general, it is difficult to show that the Seidel-Smith action on $\widetilde{\mathit{HFL}}(\widetilde{\mathcal D})$ agrees with the action induced by the map $\tau_{\#}$ on $\widehat{\mathit{CFL}}(\mathcal D)$ derived from the Heegaard diagram, and therefore difficult to compute the higher differentials in the spectral sequence. However, in these simple examples the higher differentials are determined by the homological grading, so the spectral sequences we compute here are in fact the same as the Seidel-Smith spectral sequence.

To simplify matters, we will compute all spectral sequences using appropriate chain complexes tensored with $\mathbb Z_2((\theta))$, instead of tensoring with $\mathbb Z_2[[\theta]]$ and later further tensoring the $E^{\infty}$ page with $\theta^{-1}$. Then the $E^1$ page of each spectral sequence is a free module with generators the link Floer or knot Floer homology of our doubly periodic knot, and the $E^{\infty}$ page is a free module over $\mathbb Z((\theta))$ with generators the link or knot Floer homology of the quotient.  (The only information we lose by this change is that the $E^{\infty}$ page is not the Borel Heegaard Floer link or knot homology, but rather the Borel homology tensored with $\theta^{-1}$.)

\subsection{The case of the unknot}

In the simplest possible case, let $\widetilde{K}$ be an unknot, and $K$ its quotient knot, a second unknot.  Consider the diagram $\mathcal D$ for $K \cup U$ on the sphere $S^2 = \mathbb C \cup \{\infty\}$ in which $K$ is the unit circle with basepoints $w_0, z_0$ on $U$ and basepoints $w_1, z_1$ on $K$ such that $w_0$ lies at $\infty$, $z_0$ lies at $0$, $w_1$ lies at $i$, and $z_1$ lies at $-i$.  Supply a single $\alpha_1$ and $\beta_1$ as in Figure \ref{Unknot Heegaard Diagrams Figure} coherently with a clockwise orientation of $K$.

\begin{figure} 
\centering
\includegraphics{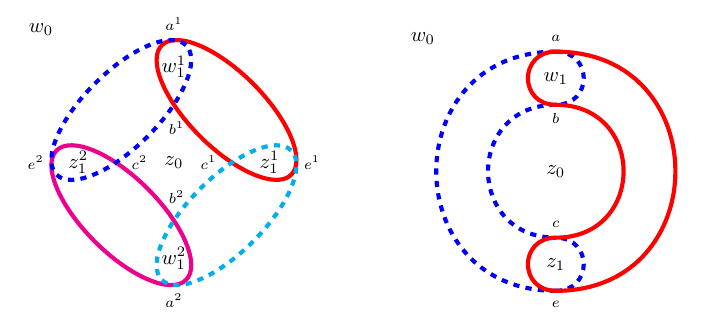}
\caption{An equivariant Heegaard diagram for the unknot together with an axis (i.e. a Hopf link), and its quotient Heegaard diagram (another Hopf link).}
\label{Unknot Heegaard Diagrams Figure}
\end{figure}

   Label the intersection points  $a, b, c, e$ vertically down the diagram as in the figure. (Since we plan to discuss the differentials $d_i$ in the spectral sequence, we will not also use $d$ to label any intersection points.) There are no differentials that count for $\widehat{\mathit{HFL}}(\mathcal D)$ --- which is exactly the link Floer homology of the Hopf link with positive linking number --- and three differentials that count for $\widehat{\mathit{HFK}}(\mathcal D) = \widehat{\mathit{HFK}}(S^3, K) \otimes W$.  See the table of Figure~\ref{Unknot Complexes Figure} for the Alexander gradings of these entries.

\begin{figure}
\centering
\includegraphics{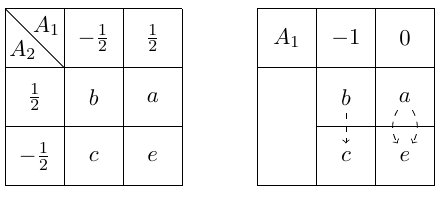}
\caption{Alexander gradings and differentials for $\widehat{\mathit{CFL}}(\mathcal D)$ (on the left) and $\widehat{\mathit{CFK}}(\mathcal D)$ (on the right).}
\label{Unknot Complexes Figure}
\end{figure}

Now lift to a diagram $\widetilde{\mathcal D}$ for the same Hopf link, which has basepoints $w_0, z_0$ on the axis $\widetilde{U}$ and $w_1^1, w_1^2, z_1^1, z_1^2$  on the lifted unknot $\widetilde{K}$.  These basepoints lie on $S^2 = \mathbb C \cup \{\infty\}$ as follows: $w_0$ and $z_0$ lie at $0$ and $\infty$ as previously, and $w_1^1, w_1^2, z_1^1, z_1^2$ lie at $i, -i, 1, -1$ respectively.  There are two curves $\alpha_1^1$ and $\alpha_1^2$ encircling the pairs $w_1^1, z_1^1$ and $w_1^2, z_1^2$, and two curves $\beta_1^1$ and $\beta_1^2$ encircling pairs $z_1^2, w_1^1$ and $z_1^1, w_1^2$.  The intersection points $a, b, c, e$ lift to eight points $a^1,a^2,b^1,b^2,c^1,c^2,e^1,e^2$ on the diagram in $\cup (\alpha_1^i \cap \beta_1^j)$.  (The numbering of each pair is arbitrarily determined by insisting that $a^i$ lie on $\alpha_1^i$, and so on.) The complex $\widehat{\mathit{CFL}}(\widetilde{\mathcal D})$ has eight generators, whose Alexander gradings are laid out in the left table of Figure \ref{Lifted Unknot Complexes Figure}; there are no differentials that count for the theory $\widetilde{\mathit{HFL}}(\widetilde{\mathcal D}) = \widehat{\mathit{HFL}}(S^3, \widetilde{K} \cup \widetilde{U}) \otimes V_1$.  Allowing differentials that pass over the basepoint $z_0$, we obtain the complex of the right table Figure~\ref{Lifted Unknot Complexes Figure} which computes $\widetilde{\mathit{HFK}}(\mathcal D) = \widehat{\mathit{HFK}}(S^3, \widetilde{K}) \otimes V_1 \otimes W$.

\begin{figure}
\centering
\includegraphics{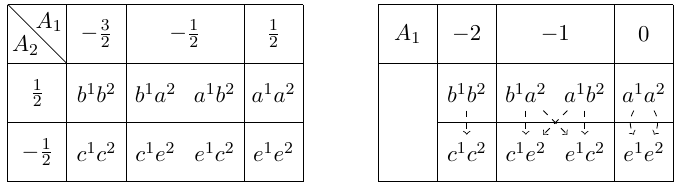}
\caption{Alexander gradings and differentials of $\widehat{\mathit{CFL}}(\widetilde{\mathcal D})$ (left) and $\widehat{\mathit{CFK}}(\widetilde{\mathcal D})$ (right).}
\label{Lifted Unknot Complexes Figure}
\end{figure}

First, consider the spectral sequence for link Floer homology of the double complex $(\widehat{\mathit{CFL}}(\widetilde{\mathcal D} \otimes \mathbb Z_2, \partial + (1 + \tau_{\#})\theta)$.  Since $\partial = 0$ on this complex, the only nontrivial differential occurs on the $E^1$ page and is exactly $d_1 = 1 + \tau_{\#}$.  Therefore the $E^2$ page of the spectral sequence is $\mathbb Z_2((\theta))\langle a^1a^2, b^1b^2, c^1c^2,\linebreak e^1e^2 \rangle$, which is isomorphic to $\widehat{\mathit{HFL}}(S^3, K \cup U) \otimes \mathbb Z_2((\theta))$, as expected.

Next, consider the spectral sequence of the double complex $(\widehat{\mathit{CFK}}(\widetilde{\mathcal D} \otimes \mathbb Z_2, \partial_{\widetilde{U}} + (1 + \tau_{\#})\theta)$.  We have the complex of Figure \ref{Lifted Unknot Complexes Figure}; the $E^1$ page of the spectral sequence is equal to $\mathbb Z_2\langle [a^1a^2], [e^1e^2], [a^1b^2+b^1a^2], [c^1e^2] \rangle \otimes \mathbb Z_2((\theta))$.  Bearing in mind that $[c^1e^2] = [c^2e^1]$, we see that $\tau_*$ is the identity on each of these four elements, and therefore the $E^2$ page of the spectral sequence is the same as the $E^1$ page. Since the differential $d_2$ must raise the Maslov grading by two, the only possible nontrivial differential on the $E^2$ page is $d_2([c^1e^2]\theta^n)$.  Let us compute this differential.  On the chain level, we have $(1+ \tau_{\#})(c^1e^2) = (c^1e^2 + c^2e^1)$.  We observe that $c^1e^2 + c^2e^1 = \partial(a^1b^2)$, so 
\begin{align*}
d_2([c^1e^2]\theta^n) &= [(1+\tau_{\#})(a^1b^2)]\theta^{n+1} \\
				&=[a^1b^2 + a^2b^1]\theta^{n+2}
\end{align*}

Therefore the $E^3$ page of this spectral sequence is exactly $\mathbb Z_2((\theta))\langle[a^1a^2]$, $[e^1e^2]\rangle$, which is isomorphic to $\big(\widehat{\mathit{HFK}}(S^3,K)\otimes W\big)\otimes \mathbb Z_2((\theta))$ as promised, and unchanged thereafter.

\subsection{The case of the trefoil}

Let us now compute some of the spectral sequence for the trefoil as a doubly-periodic knot with quotient the unknot, using the Heegaard diagrams $\mathcal D$ and $\widetilde{\mathcal D}$ of Figure \ref{Trefoil Heegaard Diagrams Figure}.  Recall that $\mathcal D$ is a Heegaard diagram for a link $L$ in $S^3$ consisting of two unknots with linking number $\lambda = 3$ and $\widetilde{\mathcal D}$ is a Heegaard diagram for a link $\widetilde{L}$ in $S^3$ consisting of the left-handed trefoil and the unknotted axis also with linking number $\lambda=3$. We label the twelve intersection points of $\alpha_1$ and $\beta_1$ in $\mathcal D$ and lift to twenty-four intersection points in $\widetilde{\mathcal D}$ in Figure \ref{Trefoil Intersection Points}.  The Alexander gradings and differentials of $\widehat{\mathit{CFL}}(\mathcal D)$ and $\widehat{\mathit{CFK}}(\mathcal D)$ are laid out in the tables of Figure \ref{Trefoil Quotient Complexes}.

\begin{figure}
\centering
\includegraphics{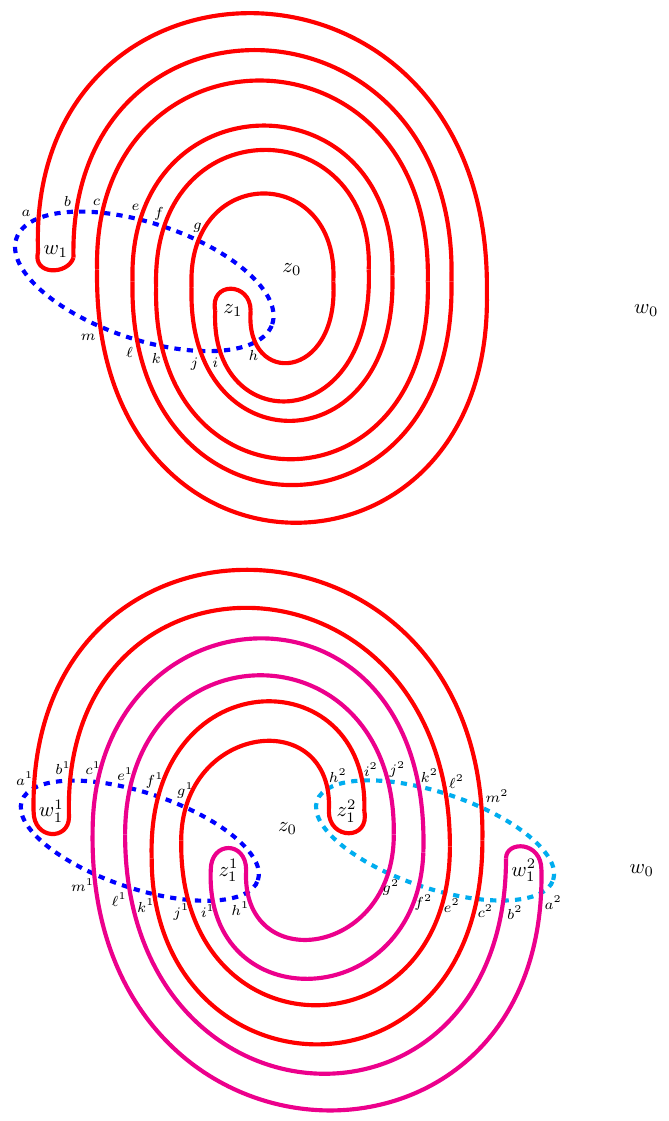}
\caption{Intersection points of $\alpha$ and $\beta$ curves in $\mathcal D$ and $\widetilde{\mathcal D}$.} \label{Trefoil Intersection Points}
\end{figure}

There are no differentials on $\widehat{\mathit{CFL}}(\mathcal D)$, so $\widetilde{\mathit{HFL}}(\mathcal D) = \widehat{\mathit{HFL}}(S^3,L)$ has the twelve generators and gradings of Figure \ref{Trefoil Quotient Complexes}.  From the remainder of that diagram, we observe that the group $\widetilde{\mathit{HFK}}(\mathcal D) = \widehat{\mathit{HFK}}(S^3,U) \otimes W$ is $\mathbb Z_2 \langle [a],[m] \rangle$.

\begin{figure}
\begin{center}
\includegraphics[width=\textwidth]{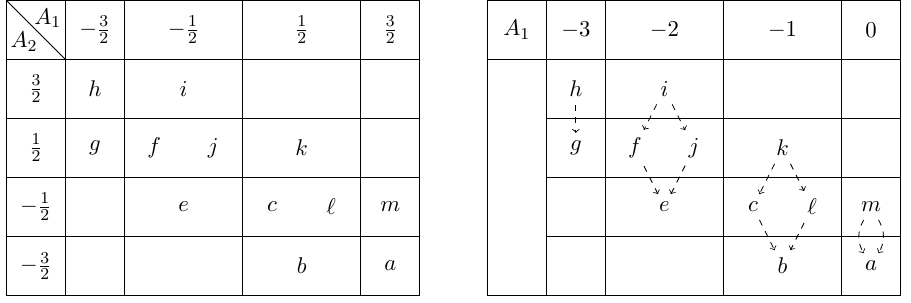}
\caption{Alexander gradings and differentials of $\widehat{\mathit{CFL}}(\mathcal D)$ (left) and $\widehat{\mathit{CFK}}(\mathcal D)$ (right).}
\label{Trefoil Quotient Complexes}
\end{center}
\end{figure}

Now consider the seventy-two generators of $\widehat{\mathit{CFL}}(\widetilde{\mathcal D})$, whose Alexander $A_1$ and $A_2$ gradings are laid out in Figure \ref{CFL(trefoil) Figure}.  It so happens that $\widetilde{\mathcal D}$ is a nice diagram in the sense of Sarkar and Wang \cite{MR2630063}, although the equivariant diagrams for periodic knots introduced in Section \ref{Periodic Knots Section} are not in general, so we may compute $\widetilde{\mathit{HFL}}(\widetilde{\mathcal D})$ with relative ease.  The chain complexes in each Alexander $A_1$ grading are shown in Figures \ref{-7/2 Figure}, \ref{-5/2 Figure}, \ref{-3/2 Figure}, \ref{-1/2 Figure}, \ref{1/2 Figure}, \ref{3/2 Figure} and \ref{5/2 Figure} at the close of this section.  Each of these figures shows the generators in a particular $A_1$ grading of $\widehat{\mathit{CFL}}(\widetilde{\mathcal D})$, which are also the generators of the $A_1 - \frac{3}{2}$ grading of $\widehat{\mathit{CFK}}(\widetilde{\mathcal D})$.  Solid arrows denote differentials that count for the differential $\partial$ and thus exist in both complexes, whereas dashed arrows denote differentials corresponding to disks with nontrivial intersection with the divisor $V_{z_0} = \{z_0\}\times \operatorname{Sym}^{2n_1}(S^2)$.  Therefore dashed differentials only count for the knot Floer complex $(\widehat{\mathit{CFK}}(\widetilde{\mathcal D}), \partial_{\widetilde{U}})$.

\begin{figure}
\centering
%\resizebox{5 in}{4 in}{}
\includegraphics[width=\textwidth]{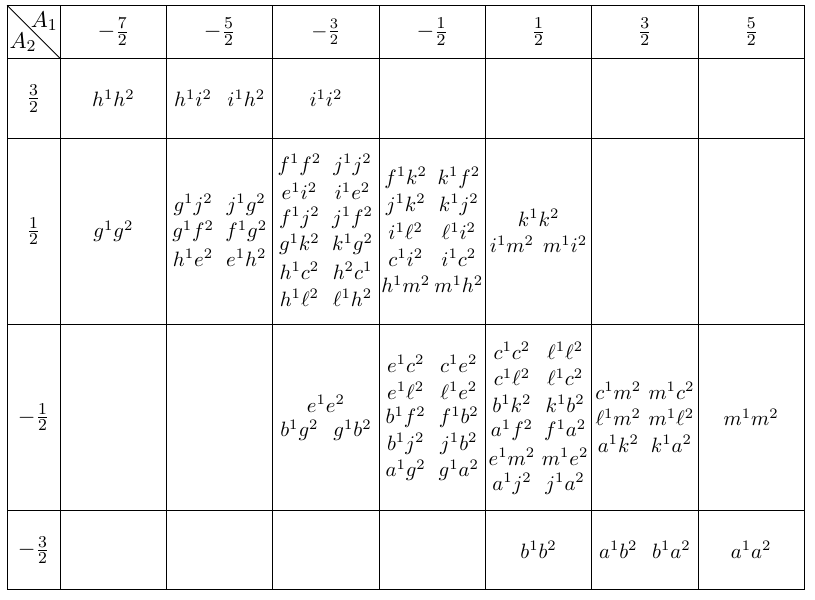}
\caption{Alexander $A_1$ and $A_2$ gradings of the generators of $\widehat{\mathit{CFL}}(\widetilde{D})$.  These generate the $E^0$ page of the link Floer homology spectral sequence.}
\label{CFL(trefoil) Figure}
\end{figure}

The link Floer homology spectral sequence associated to $\widetilde{\mathcal D}$ arises from the double complex $(\widehat{\mathit{CFL}}(\widetilde{\mathcal D}), \partial + (1 + \tau_{\#})\theta)$.
Computing homology of\linebreak $\widehat{\mathit{CFL}}(\widetilde{\mathcal D})$ with respect to the differential $\partial$, we obtain a set of generators for the $E^1$ page of the spectral sequence, which is $\widetilde{\mathit{HFL}}(\widetilde{\mathcal D}) \otimes \mathbb Z_2((\theta)) = (\widehat{\mathit{HFL}}(S^3, 3_1 \cup \widetilde{U}) \otimes V_1) \otimes \mathbb Z_2((\theta))$. These generators and their gradings may be found in Figure \ref{HFL(trefoil) figure}.  Whenever an element of $\widetilde{\mathit{HFK}}(\widetilde{\mathcal D})$ is invariant under the involution $\tau_*$ but has no representative which is invariant under the chain map $\tau_{\#}$, we have included two representatives of that element to make the $\tau_*$ invariance clear.  (For example, observe that $[h^1c^2] = [c^1h^2]$ is invariant under $\tau_*$.)

\begin{figure}
\centering
%\resizebox{5 in}{3.5 in}{
\includegraphics[width=\textwidth]{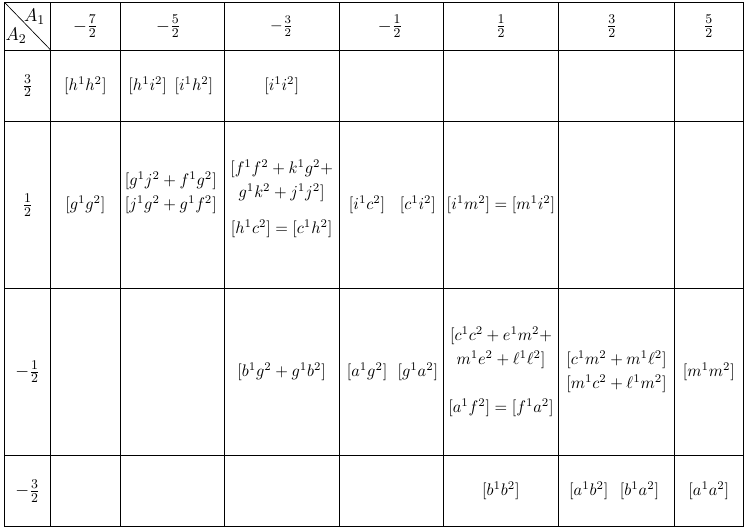}
\caption{The homology $\widetilde{\mathit{HFL}}(\widetilde{\mathcal D})$.  These elements generate the $E^1$ page of the link Floer spectral sequence as a $\mathbb Z_2((\theta))$ module.}
\label{HFL(trefoil) figure}
\vspace{-1.5em}
\end{figure}

The differential $d_1$ on the $E_1$ page of the link Floer spectral sequence for $\widetilde{\mathcal D}$ is $(1 + \tau_*)\theta$; in particular, computing the homology of $d_1$ has the effect of killing all elements of $\widetilde{\mathit{HFL}}(\mathcal D)$ not invariant under $\tau_*$.  Ergo we see that the $E^2$ page of this spectral sequence is generated as a $\mathbb Z_2((\theta))$ module by the elements of Figure \ref{HFL(E2) Figure}.  Notice that the ranks of the $E^2$ page in each Alexander grading $A_1 = 2k + \frac{1}{2}$ correspond precisely to the ranks of each Alexander grading $A_1 = k + \frac{1}{2}$ of Figure \ref{Trefoil Quotient Complexes}, which is the link Floer homology $\widetilde{\mathit{HFL}}(\mathcal D)$.  Therefore the link Floer homology spectral sequence converges on the $E^2$ page.

\begin{figure}
\centering
%\resizebox{5 in}{3.5 in}{
\includegraphics[width=\textwidth]{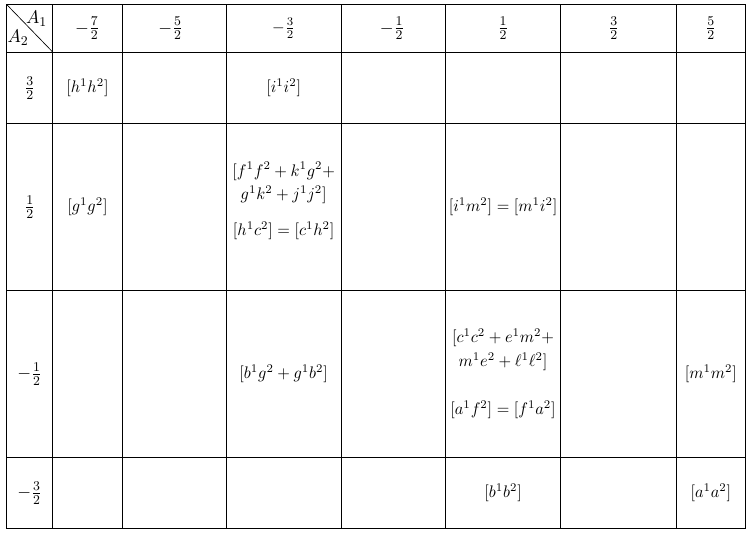}
\caption{Generators for the $E^2 = E^{\infty}$ page of the link Floer spectral sequence for $\widetilde{\mathcal D}$ as a $\mathbb Z_2((\theta))$-module.}
\label{HFL(E2) Figure}
\vspace{-1em}
\end{figure}

\enlargethispage{2em}
We now turn our attention to the knot Floer homology spectral sequence for $\widetilde{\mathcal D}$, which arises from the double complex $(\widehat{\mathit{CFK}}(\mathcal D) \otimes \mathbb Z_2((\theta)), \partial_{\widetilde{U}} + (1 + \tau_{\#})\theta))$. The Alexander $A_1$ gradings of the seventy-two generators in $\widehat{\mathit{CFK}}(\widetilde{\mathcal D})$ are laid out in Figure \ref{CFK(trefoil) Figure}.  Notice that these gradings are exactly the $A_1$ gradings of $\widehat{\mathit{CFL}}(\widetilde{\mathcal D})$ shifted downward by $\frac{\operatorname{lk}(3_1, \widetilde{U})}{2} = \frac{3}{2}$.  These elements generate the $E^0$ page of the link Floer spectral sequence as a $\mathbb Z_2((\theta))$ module.

\begin{figure}
\centering
%\resizebox{5 in}{3.5 in}{
\includegraphics[width=\textwidth]{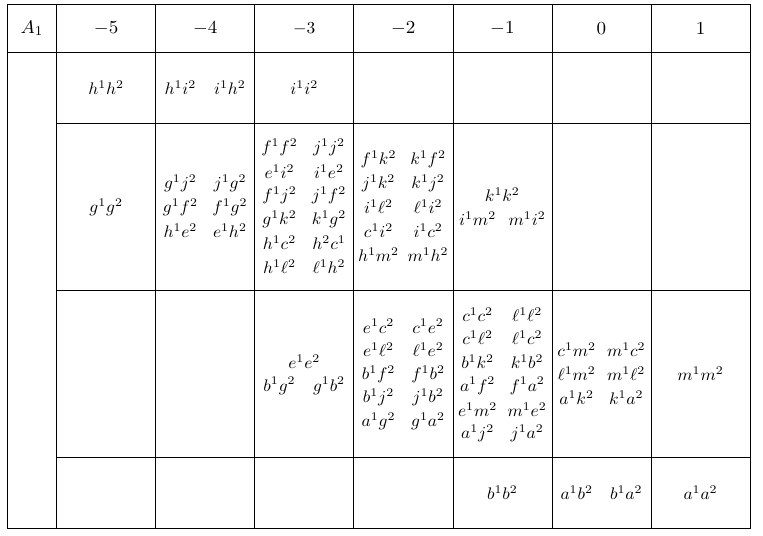}
\caption{Alexander $A_1$ gradings of elements of $\widehat{\mathit{CFK}}(\widetilde{\mathcal D})$. These elements generate the $E^0$ page of the link Floer spectral sequence as a $\mathbb Z_2((\theta))$ module.}
\label{CFK(trefoil) Figure}
\end{figure}

As before, the chain complexes in each Alexander grading may be found in \ref{-7/2 Figure}, \ref{-5/2 Figure}, \ref{-3/2 Figure}, \ref{-1/2 Figure}, \ref{1/2 Figure}, \ref{3/2 Figure} and \ref{5/2 Figure}.  Computing the homology of these complexes, we obtain $\widetilde{\mathit{HFK}}(\widetilde{\mathcal D}) = \widehat{\mathit{HFK}}(3_1)\otimes V_1 \otimes W$, whose generators are described in Figure \ref{HFK(trefoil) Figure}.  These elements generate the $E^1$ page of the knot Floer homology spectral sequence of $\widetilde{\mathcal D}$ as a $\mathbb Z_2((q))$-module.  Once again, homology classes which are equivalent under the induced involution $\tau_*$ but have no representative which is invariant under the chain map $\tau_{\#}$ have been included with two equivalent descriptions to emphasize their invariance.

The differential $d_1 = (1 + \tau_*)\theta$ on the $E^1$ page of the spectral sequence has the effect of eliminating all elements of $\widetilde{\mathit{HFK}}(\widetilde{\mathcal D})$ which are not invariant under the action of $\tau_*$.  Computing homology with respect to $d_1$ yields the set of generators of Figure \ref{E2 trefoil knot figure}.

The knot Floer spectral sequence stabilizes on the $E^2$ page in every Alexander grading except $A_1 = -2$.  There is a single nontrivial $d_2$ differential which behaves similarly to the nontrivial $d_2$ differential we earlier saw in the case of the unknot as a doubly-periodic knot.  In particular: consider $d_2([a^1g^2])$.  We compute this differential as follows: first, apply $1+\tau_{\#}$ to $a^1g^2$, obtaining $(a^1g^2 + g^1a^2)\theta$.  Next, we choose an element whose boundary under $\partial_{\widetilde{U}}$ is $(a^1g^2 + g^1a^2)\theta$; one such is $(h^1m^2)\theta$.  Then $d_2([a^1g^2]) = [1+\tau_{\#}(h^1m^2)]\theta) = [h^1m^2 + m^1h^2]\theta^2$. Moreover, we then have $d_2([a^1g^2]\theta^n)=$\linebreak $[h^1m^2 + m^1h^2]\theta^{n+2}$ in general. Therefore the Alexander grading $-2$ vanishes on the $E^3$ page of the knot Floer spectral sequence, and the $E^3 = E^{\infty}$ page is isomorphic to $\widetilde{\mathit{HFK}}(\mathcal D)$ after an appropriate shift and rescaling in Alexander $A_1$ gradings.  Generators for this page of the knot Floer spectral sequence appear in Figure \ref{E3(trefoil) Figure}.

Notice that for the left-handed trefoil considered as a two-periodic knot, Edmonds' condition is sharp: $g(3_1) = 1 = 2(0) + \frac{3 - 1}{2} = 2g(U) + \frac{\lambda -1}{2}$.  We also see here the realization of Corollary \ref{Fiberedness Corollary}: the left-handed trefoil is fibred, and the highest nontrivial Alexander grading in $\widetilde{\mathit{HFK}}(\widetilde{\mathcal D})$, namely $A_1 = 1$ has rank two.  The two generators in this Alexander grading, $[a_1a_2]$ and $[m_1m_2]$, are preserved over the course of the spectral sequence and become the two generators of $\widetilde{\mathit{HFK}}(\mathcal D)$ in grading $A_1 = 0$ under the isomorphism between the $E^{\infty}$ page of the knot Floer spectral sequence and $\widetilde{\mathit{HFK}}(\mathcal D)$.  Then the highest nontrivial Alexander grading of $\widetilde{\mathit{HFK}}(\mathcal D)$, $A_1 = 0$, also has rank two, corresponding to the unknot being a fibred knot.

\newpage
\begin{figure}[t]
\centering
%\resizebox{5 in}{3.4 in}{
\includegraphics[width=.86\textwidth]{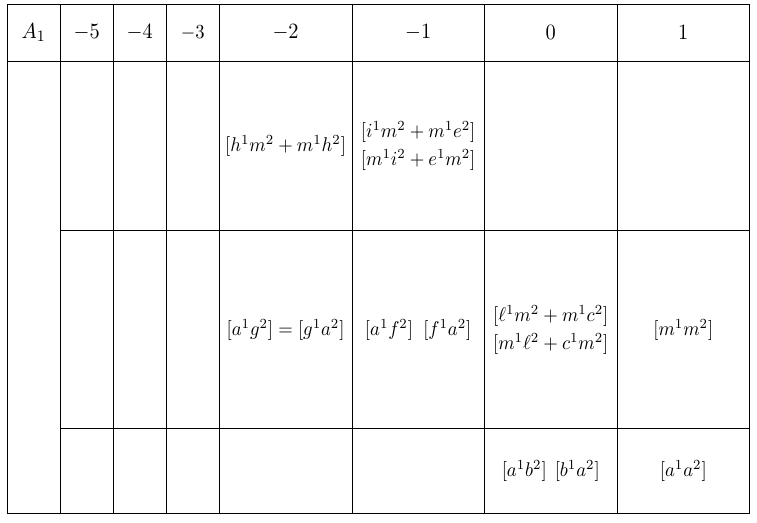}
\vspace{-1ex}
\caption{The homology $\widetilde{\mathit{HFK}}(\widetilde{\mathcal D})$, which is  $\widehat{\mathit{HFK}}(3_1)\otimes V_1 \otimes W$.  These elements generate the $E^1$ page of the knot Floer homology spectral sequence of $\widetilde{\mathcal D}$ as a $\mathbb Z_2((q))$-module.}
\label{HFK(trefoil) Figure}
\end{figure}

\begin{figure}[!h]
%\resizebox{4 in}{3 in}{
\includegraphics[scale=.9]{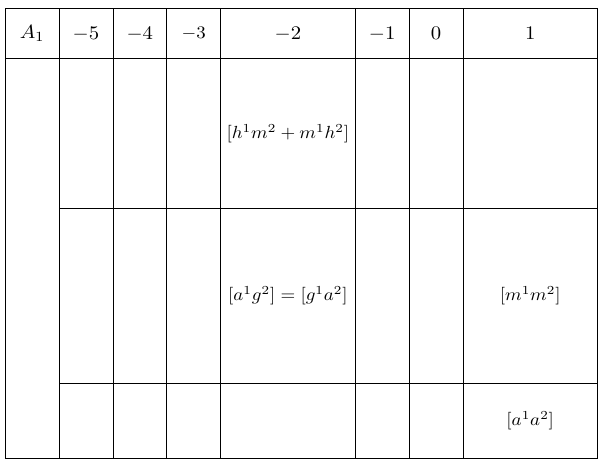}
\vspace{-1ex}
\caption{Generators for the $E^2$ page of the knot Floer spectral sequence associated to $\widetilde{\mathcal D}$ as a $\mathbb Z_2((q))$ module.}
\label{E2 trefoil knot figure}
\end{figure}

\begin{figure}
\centering
%\resizebox{3 in}{2 in}{
\includegraphics{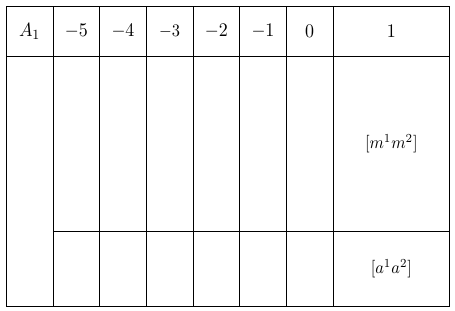}
\caption{Generators for the $E^3 = E^{\infty}$ page of the knot Floer spectral sequence associated to $\mathcal{\widetilde{D}}$.}
\label{E3(trefoil) Figure}
\end{figure}

\vspace{.7ex}

\begin{figure}
\centering
\includegraphics{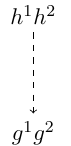}
\caption{The chain complex $\widehat{\mathit{CFL}}(\widetilde{\mathcal D})$ in Alexander grading $A_1 = -\frac{7}{2}$, and the chain complex $\widehat{\mathit{CFK}}(\widetilde{\mathcal D})$ in Alexander grading $A_1 = -5$.  Dashed arrows denote differentials appearing only in the latter.}
\label{-7/2 Figure}
\end{figure}

\vspace{.7ex}

\begin{figure}
\centering
\includegraphics{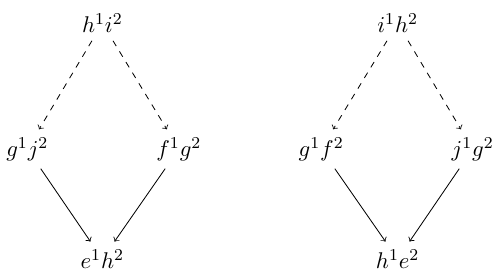}
\caption{The chain complex $\widehat{\mathit{CFL}}(\widetilde{\mathcal D})$ in Alexander grading $A_1 = -\frac{5}{2}$, and the chain complex $\widehat{\mathit{CFK}}(\widetilde{\mathcal D})$ in Alexander grading $A_1 = -4$.  Dashed arrows denote differentials appearing only in the latter.}
\label{-5/2 Figure}
\end{figure}

\newpage
\begin{figure}
\vspace{8em}
\centering
\includegraphics{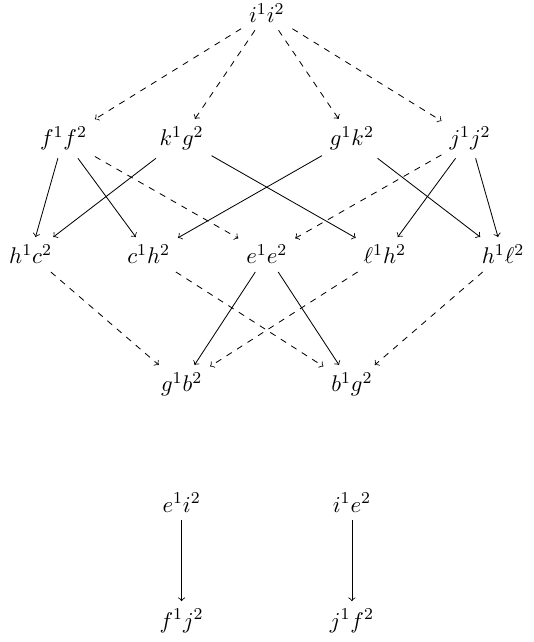}
\caption{The chain complex $\widehat{\mathit{CFL}}(\widetilde{\mathcal D})$ in Alexander grading $A_1 = -\frac{3}{2}$, and the chain complex $\widehat{\mathit{CFK}}(\widetilde{\mathcal D})$ in Alexander grading $A_1 = -3$.  Dashed arrows denote differentials appearing only in the latter.}
\label{-3/2 Figure}
\end{figure}
\clearpage

\begin{figure}
\vspace{3em}
\centering
%\resizebox{3 in}{6 in}{
\includegraphics{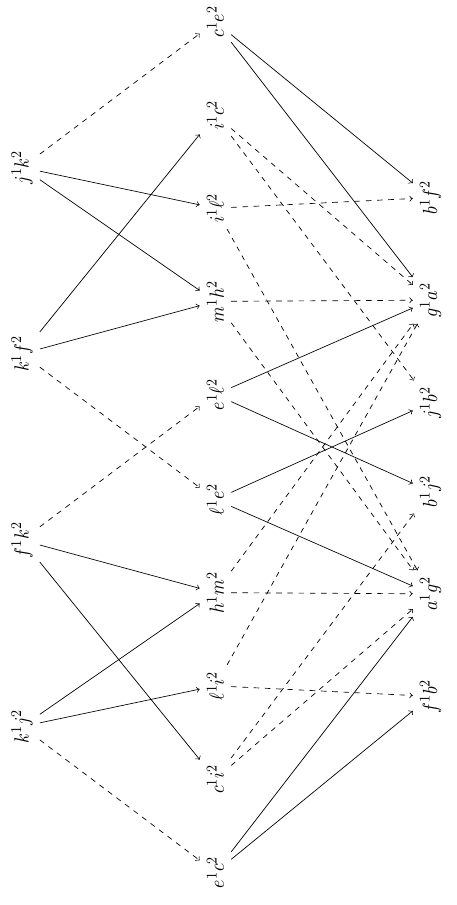}
\caption{The chain complex $\widehat{\mathit{CFL}}(\widetilde{\mathcal D})$ in Alexander grading $A_1 = -\frac{1}{2}$, and the chain complex $\widehat{\mathit{CFK}}(\widetilde{\mathcal D})$ in Alexander grading $A_1 = -2$.  Dashed arrows denote differentials appearing only in the latter.}
\label{-1/2 Figure}
\end{figure}
\clearpage

\begin{figure}
\centering
\includegraphics{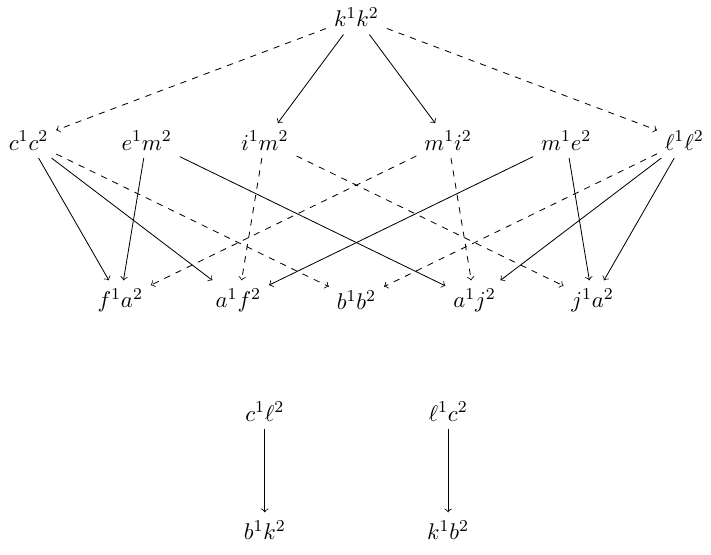}
\caption{The chain complex $\widehat{\mathit{CFL}}(\widetilde{\mathcal D})$ in Alexander grading $A_1 = \frac{1}{2}$, and the chain complex $\widehat{\mathit{CFK}}(\widetilde{\mathcal D})$ in Alexander grading $A_1 = -1$.  Dashed arrows denote differentials appearing only in the latter.}
\label{1/2 Figure}
\end{figure}

\begin{figure}
\centering
\includegraphics{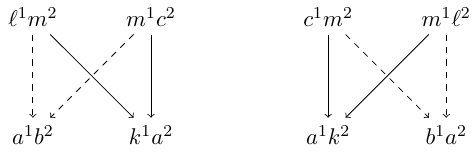}
\caption{The chain complex $\widehat{\mathit{CFL}}(\widetilde{\mathcal D})$ in Alexander grading $A_1 = \frac{3}{2}$, and the chain complex $\widehat{\mathit{CFK}}(\widetilde{\mathcal D})$ in Alexander grading $A_1 = 0$.  Dashed arrows denote differentials appearing only in the latter.}
\label{3/2 Figure}
\end{figure}

\begin{figure}
\centering
\includegraphics{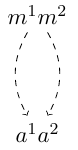}
\caption{The chain complex $\widehat{\mathit{CFL}}(\widetilde{\mathcal D})$ in Alexander grading $A_1 = \frac{5}{2}$, and the chain complex $\widehat{\mathit{CFK}}(\widetilde{\mathcal D})$ in Alexander grading $A_1 = 1$.  Dashed arrows denote differentials appearing only in the latter.}
\label{5/2 Figure}
\end{figure}

\bibliography{2-342}

\providecommand{\bysame}{\leavevmode\hbox to3em{\hrulefill}\thinspace}
\def\MR#1{{\bf MR #1}}
% \MRhref is called by the amsart/book/proc definition of \MR.


\begin{thebibliography}{10}

\bibitem{MR0139181}
M.~F. Atiyah and F.~Hirzebruch,
\newblock \emph{Vector bundles and homogeneous spaces}.
  Proc. {S}ympos. {P}ure {M}ath., {V}ol. {III}, American Mathematical Society,
  Providence, R.I., 1961, pp.~7--38. \MR{0139181 (25 \#2617)}

\bibitem{MR2925428}
John~A. Baldwin and William~D. Gillam, \emph{Computations of {H}eegaard-{F}loer
  knot homology}. J. Knot Theory Ramifications, \textbf{21} (2012), no.~8,
  1250075, 65. \MR{2925428}

\bibitem{KnotInfo}
J.~C.~Cha and C.~Livingston, \emph{Knot{I}nfo: {T}able of {K}not {I}nvariants}.
Available at \url{http://www.indiana.edu/~knotinfo}.

\bibitem{MR769284}
Allan~L. Edmonds, \emph{Least area {S}eifert surfaces and periodic knots}.
  Topology Appl., \textbf{18} (1984), no.~2--3, 109--113. \MR{769284 (86c:57005)}

\bibitem{MR728451}
Allan~L. Edmonds and Charles Livingston,
\newblock \emph{Group actions on fibered
  three-manifolds}. Comment. Math. Helv., \textbf{58} (1983), no.~4, 529--542.
  \MR{728451 (85k:57037)}

\bibitem{MR965228}
Andreas Floer, \emph{Morse theory for {L}agrangian intersections}. J.~Differential Geom., \textbf{28} (1988), no.~3, 513--547. \MR{965228
  (90f:58058)}

\bibitem{MR933228}
Andreas Floer, \emph{A relative {M}orse index for the symplectic action}. Comm. Pure
  Appl. Math., \textbf{41} (1988), no.~4, 393--407. \MR{933228 (89f:58055)}

\bibitem{MR948771}
Andreas Floer, \emph{The unregularized gradient flow of the symplectic action}. Comm.
  Pure Appl. Math., \textbf{41} (1988), no.~6, 775--813. \MR{948771 (89g:58065)}

\bibitem{MR723813}
David Gabai, \emph{Foliations and the topology of {$3$}-manifolds}. 
J.~Differential Geom., \textbf{18} (1983), no.~3, 445--503. \MR{723813
  (86a:57009)}

\bibitem{MR2450204}
Paolo Ghiggini, 
\newblock \emph{Knot {F}loer homology detects genus-one fibred knots}.
Amer. J. Math., \textbf{130} (2008), no.~5, 1151--1169. \MR{2450204
  (2010f: 57013)}

\bibitem{Hall}
Philip Hall, \emph{On representatives of subsets}. J.~London Math. Soc.,
  \textbf{s1--10} (1935), no.~1, 26--30.

\bibitem{HatcherKTheory}
Allen Hatcher, \emph{Vector bundles \& k-theory}. 2009.

\bibitem{MR2372849}
Matthew Hedden, \emph{Knot {F}loer homology of {W}hitehead doubles}. Geom.
  Topol., \textbf{11} (2007), 2277--2338. \MR{2372849 (2008m:57030)}

\bibitem{Hendricks}
Kristen Hendricks, \emph{A rank inequality for the knot {F}loer homology of
  double branched covers}. Algebr. Geom. Topol., \textbf{12} (2012), no.~4,
  2127--2178. \MR{3020203}

\bibitem{HendricksThesis}
Kristen Hendricks, \emph{Localization and {H}eegaard {F}loer homology}. ProQuest LLC, Ann
  Arbor, MI, 2013, Thesis (Ph.D.), Columbia University.

\bibitem{MR2443111}
Adam~Simon Levine, \emph{Computing knot {F}loer homology in cyclic branched
  covers}. Algebr. Geom. Topol., \textbf{8} (2008), no.~2, 1163--1190.
  \MR{2443111 (2009i:57029)}

\bibitem{MR2240908}
Robert Lipshitz, \emph{A cylindrical reformulation of {H}eegaard {F}loer
  homology}.
\newblock Geom. Topol., \textbf{10} (2006), 955--1097.
\newblock \MR{2240908 (2007h: 57040)}

\bibitem{MR2045629}
Dusa McDuff and Dietmar Salamon, \emph{{$J$}-holomorphic curves and symplectic
  topology}. American Mathematical Society Colloquium Publications, Vol.~52,
  American Mathematical Society, Providence, RI, 2004. \MR{2045629
  (2004m:53154)}

\bibitem{MR0292060}
Kunio Murasugi, \emph{On periodic knots}. Comment. Math. Helv., \textbf{46}
  (1971), 162--174. \MR{0292060 (45 \#1148)}

\bibitem{MR2357503}
Yi~Ni, \emph{Knot {F}loer homology detects fibred knots}. Invent. Math.,
  \textbf{170} (2007), no.~3, 577--608. \MR{2357503 (2008j:57053)}

\bibitem{MR1993792}
Boon~W. Ong, \emph{The homotopy type of the symmetric products of bouquets of
  circles}. Internat. J. Math., \textbf{14} (2003), no.~5, 489--497. \MR{1993792
  (2004g:55010)}

\bibitem{MR2023281}
Peter Ozsv{\'a}th and Zolt{\'a}n Szab{\'o}, \emph{Holomorphic disks and genus
  bounds}. Geom. Topol., \textbf{8} (2004), 311--334. \MR{2023281 (2004m:57024)}

\enlargethispage{1em}
\bibitem{MR2065507}
Peter Ozsv{\'a}th and Zolt{\'a}n Szab{\'o}, \emph{Holomorphic disks and knot invariants}. Adv. Math., \textbf{186} (2004), no.~1, 58--116. \MR{2065507 (2005e: 57044)}

\bibitem{MR2058681}
Peter Ozsv{\'a}th and Zolt{\'a}n Szab{\'o}, \emph{Knot {F}loer homology, genus bounds, and mutation}.
\newblock Topology Appl., \textbf{141} (2004), no.~1--3, 59--85. \newblock \MR{2058681 (2005b:57028)}

\bibitem{MR2153455}
Peter Ozsv{\'a}th and Zolt{\'a}n Szab{\'o}, \emph{Heegaard {F}loer homology and contact structures}. Duke Math.~J.,
  \textbf{129} (2005), no.~1, 39--61. \MR{2153455 (2006b:57043)}

\bibitem{MR2249248}
Peter Ozsv{\'a}th and Zolt{\'a}n Szab{\'o}, \emph{Lectures on {H}eegaard {F}loer homology}. Floer homology, gauge
  theory, and low-dimensional topology, Clay Math. Proc., Vol.~5, Amer. Math.
  Soc., Providence, RI, 2006, pp.~29--70. \MR{2249248 (2007g:57053)}

\bibitem{MR2443092}
Peter Ozsv{\'a}th and Zolt{\'a}n Szab{\'o}, \emph{Holomorphic disks, link invariants and the multi-variable
  {A}lexander polynomial}. Algebr. Geom. Topol., \textbf{8} (2008), no.~2,
  615--692. \MR{2443092 (2010h:57023)}

\bibitem{MR2393424}
Peter Ozsv{\'a}th and Zolt{\'a}n Szab{\'o}, \emph{Link {F}loer homology and the {T}hurston norm}. J.~Amer. Math.
  Soc., \textbf{21} (2008), no.~3, 671--709. \MR{2393424 (2009a:57050)}

\bibitem{MR2509747}
Timothy Perutz, \emph{Hamiltonian handleslides for {H}eegaard {F}loer
  homology}. Proceedings of {G}\"okova {G}eometry-{T}opology {C}onference 2007,
  G\"okova Geometry/Topology Conference (GGT), G\"okova, 2008, pp.~15--35.
  \MR{2509747 (2010m:57048)}

\bibitem{MR2704683}
Jacob~Andrew Rasmussen, \emph{Floer homology and knot complements}. ProQuest
  LLC, Ann Arbor, MI, 2003, Thesis (Ph.D.), Harvard University. \MR{2704683}

\bibitem{MR2630063}
Sucharit Sarkar and Jiajun Wang, \emph{An algorithm for computing some
  {H}eegaard {F}loer homologies}. Ann. of Math. (2), \textbf{171} (2010), no.~2,
  1213--1236. \MR{2630063}

\bibitem{MR2739000}
Paul Seidel and Ivan Smith,
\newblock \emph{Localization for involutions in {F}loer
  cohomology}.
\newblock Geom. Funct. Anal., \textbf{20} (2010), no.~6, 1464--1501.
 \newblock \MR{2739000}

\bibitem{MR823443}
William~P. Thurston, \emph{A norm for the homology of {$3$}-manifolds}. Mem.
  Amer. Math. Soc., \textbf{59} (1986), no.~339, i--vi and 99--130. \MR{823443
  (88h:57014)}

\end{thebibliography}

\clearpage
\address{Department of Mathematics, University Of California Los Angeles\\
 Los Angeles, CA 90095-1555, USA\\
\email{hendricks@math.ucla.edu}\\
\received{October 15, 2012}}

\end{document}